\tikzset{
>=stealth',
help lines/.style={dashed, thick},
axis/.style={<->},
important line/.style={thick},
connection/.style={thick, dotted},
}
\newcommand{\N}{\mathbb{N}}
\newcommand{\R}{\mathbb{R}}
\newcommand{\C}{\mathbb{C}}
\def\HH{\mathcal{H}}
\def\TT{\mathcal{T}}
\newcommand{\vertiii}[1]{{\left\vert\kern-0.25ex\left\vert\kern-0.25ex\left\vert #1 
    \right\vert\kern-0.25ex\right\vert\kern-0.25ex\right\vert}}
   \newcommand\restr[2]{{
  \left.\kern-\nulldelimiterspace 
  #1 
  \vphantom{\big|} 
  \right|_{#2} 
  }} 
\let\hat\widehat
\renewcommand*\env@matrix[1][\arraystretch]{
  \edef\arraystretch{#1}%
  \hskip -\arraycolsep
  \let\@ifnextchar\new@ifnextchar
  \array{*\c@MaxMatrixCols c}}
\DeclareMathOperator*{\argmax}{arg\,max}
\newtheorem{theorem}{Theorem}[section]
\newtheorem{proposition}[theorem]{Proposition}
\newtheorem{lemma}[theorem]{Lemma}
\newtheorem{remark}[theorem]{Remark}
\newtheorem{definition}[theorem]{Definition}
\newtheorem{example}[theorem]{Example}
\def\revision#1{{#1}} 
\title{Convergence of adaptive stochastic collocation with finite elements}
\date{}
\author{Michael Feischl and Andrea Scaglioni}
\thanks{Funded by the Deutsche Forschungsgemeinschaft (DFG, German Research Foundation) – Project-ID 258734477 – SFB 1173 and the Austrian Science Fund (FWF) the SFB Taming complexity in
partial differential systems (grant SFB F65)\\
Institute of Analysis and Scientific Computing
TU Wien, Wiedner Hauptstra\ss e 8-10, 1040 Vienna}
\begin{document}

\begin{abstract}
 We consider an elliptic partial differential equation with a random diffusion parameter discretized by a stochastic collocation method in the parameter domain and a finite element method in the spatial domain. We prove for the first time convergence of a stochastic collocation algorithm which adaptively enriches the parameter space as well as refines the finite element meshes.
\end{abstract}

\maketitle

\section{Introduction} 
Partial differential equations with random data are a ubiquitous tool in the modeling of real life phenomena such as structural vibrations~\cite{ap2}, groundwater flow~\cite{flow},
and composite material behavior~\cite{ap1}. The efficient approximation of solutions of those equations is a challenging 
problem as it requires the approximation of high-dimensional functions in a parameter domains as well as low-dimensional but in general non-regular functions in the spatial domain.
While effective ways to generate the random data have been studied in~\cite{h2randfield, schwabrand}, we focus on the numerical approximation of the resulting solution of the PDE.

To that end, we consider an adaptive stochastic collocation algorithm for a random diffusion problem proposed in~\cite{guignard2018posteriori} and extend it to include spatial mesh refinement for a finite element method. We give the first proof of convergence of the adaptive algorithm to the exact solution and even derive some convergence rates as well as optimality statements. The main difficulty to overcome is the interplay of parametric enrichment and finite element refinement to ensure overall convergence.

Stochastic collocation is a so-called non-intrusive method, which has the big advantage that it does not require new solver algorithms, but reuses deterministic solvers only. Roughly speaking, the exact solution depends on a parametric variable (the random input) and a spatial variable. While the spatial dependence is resolved by standard finite element approximation, the parametric dependence is discretized by collocation. For each collocation point, we only need to solve a deterministic problem and therefore can reuse well tested finite element codes.

Problems of this kind have been considered in many prior works. See, e.g.,~\cite{scc2} for the (apparent) first appearance of the term \emph{stochastic collocation} in the field of computational fluid dynamics. The authors combine a polynomial chaos expansion with a collocation solver. This is generalized and formalized in~\cite{scc3}, where different strategies and collocation points are discussed, and error analysis is provided.  In~\cite{babuvska2007stochastic}, rigorous error analysis is developed for the discretization of a random PDE with a combination of standard finite elements and Gaussian collocation points (and also sparse grid points). In~\cite{scc5}, dimension dependent anisotropy of the solution is addressed by use of anisotropic sparse grids and in~\cite{scc4}, the optimal choice of the sparse grid parameters (the multi-index set) is discussed.

More recent works, which technically do not use stochastic collocation  but still deal with similar mathematical techniques and problems, include~\cite{hoqmc,qmcsoa} for quasi-Monte Carlo sampling approaches,~\cite{ml1,ml2} for multi-level methods, and~\cite{multiindex} for a multi-index method. Those methods have in common that they do not recover the full probability distribution of the exact solution but only compute a certain quantity of interest of it, e.g., the expectation or higher moments.

Another important and mathematically well-understood branch of methods for PDEs with random parameters are so-called stochastic Galerkin methods. Belonging to the class of intrusive methods, they employ a Galerkin method in both the spatial and the parametric domain. Algorithms have been proposed in~\cite{sgc1,sgc2,sgc3,sgc4,sgc5}.

The question of whether intrusive or non-intrusive algorithms are more advantageous is a fundamental one and depends on the specific problem setting and goals of the user. An overview is given in~\cite{Giraldi14}.

\bigskip

When dealing with PDEs with random data, adaptivity comes into mind for two reasons: First, spatial adaptivity is necessary to resolve singularities originating from geometric features (e.g., concave corners) and from irregular coefficients induced by the random input. Uniform meshes suffer from drastic reduction of convergence rate in the presence of such singularities, see, e.g.,~\cite{axioms} for an exhaustive overview on $h$-adaptive methods. 

Second, parametric adaptivity is necessary to resolve anisotropies in the random coefficient. The random input is often  parameterized on high-dimensional parameter domains, and
usually not all directions of that domain are equally important. Therefore, a straightforward tensor approximation approach would suffer dramatically from the curse of dimensionality. Here, an adaptive approach can outperform uniform methods significantly, see~\cite{aa1,aa2} for an overview.

Adaptive approximation of high-dimensional parameter domains has a long history. A starting point is often the work by Kolmogorov~\cite{kolmogorov}, in which the decomposition of a high-dimensional function into a sum of lower dimensional terms is discussed. Similar ideas have been pursued in statistics, we mention, e.g., so-called additive models~\cite{additive}, \emph{Multivariate adaptive regression splines}~\cite{mars}, or the \emph{ANOVA} decomposition~\cite{anova1,anova2}.  One of the first dimension adaptive algorithms for sparse grids can be found in~\cite{Gerstner03}. The method uses an error estimator that, in practice, agrees very well with the actual error but does not constitute a rigorous upper bound (it may underestimate the error by an arbitrarily large factor in exotic cases).  

\bigskip

For PDEs with random data, adaptive stochastic Galerkin algorithms have been investigated in~\cite{bespalov2019convergence,schwabadapt} with convergence and even optimality proofs.
Even low-rank tensor formats have been used in~\cite{Espig14, Dolgov15} to speed up the computation of stochastic Galerkin matrix and~\cite{Dolgov15} shows that in certain cases, the low-rank approximation of the full tensor product approximation can be stored and manipulated faster than some adaptive sparse approximations.

An adaptive sparse grid collocation algorithm based on a reliable error estimator was proposed in~\cite{guignard2018posteriori}. The work uses a sparse grid interpolation operator to discretize the parametric domain and proposes an error estimator which consists of a parametric estimator as well as a finite element estimator.   

A couple of recent works deal with similar approaches. A non-adaptive but true multi-level collocation method is proposed in~\cite{teckentrup}. The multi-level aspect allows the method to treat high-fidelity finite element approximations with low-fidelity parametric approximation and vice versa. This further reduces the impact of the curse of dimensionality and results in a very efficient algorithm. Such an approach could be built on top of the method proposed in the present manuscript in order to reduce the cost. An adaptive version of the multi-level algorithm was recently proposed in~\cite{scheichl}. While the algorithm is adaptive in both the parameter and spatial domain, the authors use an error estimator from~\cite{Gerstner03} that is not an upper bound for the error and hence can not prove convergence without extra assumptions. Finally, the very recent but independent work~\cite{eigel} analyzes the same parameter adaptive algorithm as in this work and proves convergence. However, they do not consider spatial adaptive refinement of the finite element meshes. As we show in numerical experiments in Section~\ref{sec:numerics}, spatial adaptivity clearly improves overall performance and hence should be included in any adaptive algorithm \revision{(however, a careful choice of the adaptive strategy is important as minor variations can lead to significant performance differences).}

The remainder of this work is organized as follows: We present the model problem in Section~\ref{sec:model} and describe the adaptive algorithm in Section~\ref{sec:def_adaptive_algo}.
In Section~\ref{sec:paramconv}, we prove convergence of the adaptive algorithm for the pure parameter enrichment problem (i.e., the problem considered in~\cite{guignard2018posteriori}), and Section~\ref{sec:conv_fully_discr_algo} proves the convergence of the full adaptive algorithm including spatial adaptivity \revision{(with one adaptive mesh per collocation point or one global adaptive mesh)}. Section~\ref{sec:numerics} presents some numerical experiments and in the final section, we draw some conclusions.

\subsection{Problem statement}\label{sec:model}
{ Consider an integer $d\geq2$ and an open bounded domain $D\subset \R^d$ with Lipschitz continuous boundary $\partial D$. Let $ \left(\Omega, \mathcal{F}, \mathcal{P}\right) $ be a complete probability space.}
{Let $Y_n:\Omega\rightarrow \R$ be independent random variables with ranges $\Gamma_n\coloneqq Y_n(\Omega)$ and densities $\rho_n: \Gamma_n\rightarrow \R_{\geq 0}$  for all $n\in 1,\ldots, N$. In the present work, we assume that the ranges $\Gamma_n$ are \emph{bounded} subsets of $\R$.}
Let $\Gamma \coloneqq \bigotimes_{n=1}^N \Gamma_n \subset \R^N$ and $\rho \coloneqq \bigotimes_{n=1}^N \rho_n $.
The triple ($\Gamma, \mathcal{B}(\Gamma), \rho(\bm{y}) \textrm{d}\bm{y}$) ($\mathcal{B}(\Gamma)$ the Borel $\sigma$-algebra on $\Gamma$) is a probability space.
Consider $f\in L^2(D)$ and $a:\Gamma \times D \rightarrow \R$ with the following properties:  
\begin{itemize}
 \item uniform boundedness
\begin{equation*}
\exists a_{\rm min}, a_{\rm max}\in \R_{>0} : a_{\rm min} \leq a(\bm{y}, x) \leq a_{\rm max}\qquad \rho\textrm{-a.e. } \bm{y}\in\Gamma, \forall x\in D
\end{equation*}
\item affine dependence on $\bm{y}\in\Gamma$
\begin{equation*}
\forall n\in 0,\ldots, N\ \exists\ a_n:D\rightarrow \R : a(\bm{y}, x) = a_0(x) + \sum_{n=1}^N a_n(x) y_n
\end{equation*}
\item {regularity in space $\nabla a(\bm{y},\cdot)|_T\in L^\infty(T)$ for all elements $T$ of a coarse \emph{initial} mesh $\TT_{\rm init}$ of $D$.}
\end{itemize}
{We consider the parametric weak formulation of the Poisson problem:}
Find $u:\Gamma\rightarrow V$ such that
\begin{equation} \label{pb:analyitic_problem}
\int_D a(x, \bm{y}) \nabla u(x, \bm{y}) \cdot \nabla v(x) \textrm{d} x = \int_D f(x) v(x) \textrm{d}x \qquad \forall v\in V, \ \rho\textrm{-a.e.}\ \bm{y}\in \Gamma.
\end{equation}
Here, $V$ denotes the Sobolev space $H^1_0(D)$ with the norm $\norm{v}_V \coloneqq \norm{\nabla v}_{L^2(D)}$.

Due to uniform ellipticity of the problem the exact solution is unique and (see also, e.g.,~\cite[Lemma~3.1]{babuvska2007stochastic}) there exists $\bm{\tau} \subset \R_{>0}^N$ such that $u:\Gamma\rightarrow V$ can be extended to a bounded holomorphic function on the set
\begin{equation}\label{def_anal_ext_domain}
\Sigma(\Gamma, \bm{\tau}) \coloneqq \left\{ \bm{z} \in \C^N : {\rm dist}(z_n, \Gamma_n) \leq \tau_n \ \forall n = 1,\ldots ,N\right\}.
\end{equation}
\def\HH{\mathcal{H}}
\subsection{The sparse grid stochastic collocation interpolant}
We aim at building a discretization of the solution $u$ of (\ref{pb:analyitic_problem}) in the space 
\begin{equation} \label{discr_space_abstact}
\mathbb{P}(\Gamma, W) \cong \mathbb{P}(\Gamma) \otimes W,
\end{equation}
where $\mathbb{P}(\Gamma)$ is a finite-dimensional polynomial space on $\Gamma$ and $W$ is a finite-dimensional subspace of $V$.
In order to do so, we fix a set $\mathcal{H}$ of distinct collocation points in $\Gamma$ and denote by $\left\{L_{\bm{y}}\right\}_{\bm{y}\in\mathcal{H}}$ the related set of Lagrange basis functions (i.e. the unique set of  polynomials of minimal degree over $\Gamma$ such that $L_{\bm{z}}(\bm{y}) = \delta_{\bm{y},\bm{z}}$ for any $\bm{y},\bm{z}\in\mathcal{H}$). 
By $\mathbb{P}(\Gamma)$, we denote the polynomial space spanned by $\left\{L_{\bm{y}}\right\}_{\bm{y}\in\mathcal{H}}$.
For any $\bm{y}\in \mathcal{H}$, we consider 
$\mathcal{T}_{\bm{y}}$, a shape-regular triangulation on $D$ depending on $\bm{y}$, 
and $V_{\bm{y}} \coloneqq S^1_{0}(\mathcal{T}_{\bm{y}})$, the classical finite elements space of piecewise-linear functions over $\mathcal{T}_{\bm{y}}$ with zero boundary conditions.
We denote by $U_{\bm{y}} \in V_{\bm{y}}$ the finite element solution of the problem for the parameter $\bm{y}$, i.e.,
\begin{subequations} \label{abstract_form_discrete_sol}
\begin{equation}
\int_D a(x, \bm{y}) \nabla U_{\bm{y}}(x) \cdot \nabla v(x) \textrm{d} x = \int_D f(x) v(x) \textrm{d}x \qquad \forall v\in V_{\bm{y}}.
\end{equation}
Finally, the discretization of $u$ takes the form
\begin{equation}
u_{\HH} (x, \bm{z}) = \sum_{\bm{y}\in\HH} U_{\bm{y}}(x) L_{\bm{y}} (\bm{z}).
\end{equation}
\end{subequations}
The number of degrees of freedom of $u_{\HH}$ is $\sum_{{\bm{y}}\in\HH} \dim\left(V_{\bm{y}}\right)$. The space $W$ from (\ref{discr_space_abstact}) will be  {the smallest subspace of $V$ that contains each} of the finite element spaces $\left\{V_{\bm{y}}\right\}_{{\bm{y}}\in\HH}$. 
The set of collocation nodes and the polynomial space are defined following the sparse grid construction, which we now describe briefly.
We start by considering a family of 1D nodes, i.e. a set $\mathcal{Y}^n \coloneqq \left\{ y^{(n)}_j \right\}_{j=1}^n\subset \R$ defined for any positive integer $n$. We require the family of $\mathcal{Y}^n$ to be nested, i.e. $\mathcal{Y}^n \subset \mathcal{Y}^{n+1}$ for any $n\in\N$.
The particular number of the quadrature nodes used in the algorithm is encoded in the function $m(\cdot)\colon \N\to \N$.
Finally, let $I\subset \N^N$ be a \emph{downward-closed} multi-index set, i.e.,
\begin{equation*}
\forall \bm{i}\in I, \quad \bm{i}-\bm{e}_n\in I \quad \forall n = 1,\ldots , N \textrm{ such that } i_n >1.
\end{equation*}
with $\bm{e}_n$ the $n$-th unit vector in $\mathbb{N}^N$.
The \emph{sparse grid interpolant} of a function $v\in C^0(\Gamma, V)$ is
\begin{equation} \label{definition_SG_interpol}
S_I[v](\bm{y}) := \sum_{\bm{i}\in I} \Delta^{m(\bm{i})}(v) (\bm{y}),
\end{equation}
where the \emph{hierarchical surplus} operator is defined as $\Delta^{m(\bm{i})} \coloneqq \bigotimes_{n=1}^N \Delta^{m(i_n)}$, 
the \emph{detail operator} is defined as $\Delta^{m(i_n)}  \coloneqq \mathcal{U}_n^{m(i_n)} - \mathcal{U}_n^{m(i_n-1)}$ and $\mathcal{U}_n^{m(i_n)} \colon C^0(\Gamma_n) \rightarrow \mathbb{P}_{m(i_n)-1}(\Gamma_n)$ is the Lagrange interpolant with respect to the nodes $\mathcal{Y}^{m(i_n)}\subset \Gamma_n$. Finally, we set $\mathcal{U}_n^0 \equiv 0 $ for all $n\in 1,\ldots, N$.

The polynomial space $\mathbb{P}(\Gamma)$ introduced in (\ref{discr_space_abstact}) corresponds to
\[\mathbb{P}_I(\Gamma) \coloneqq \sum_{\bm{i}\in I} \mathbb{P}_{m(\bm{i})-\bm{1}}(\Gamma) \quad\textrm{where}\quad\mathbb{P}_{m(\bm{i})-\bm{1}}(\Gamma) \coloneqq \bigotimes_{n=1}^N{\mathbb{P}_{m(i_n)-1}}(\Gamma_n).\]
The sparse grid stochastic collocation interpolant can be written as a linear combination of tensor product Lagrange interpolants (see, for instance, \cite{wasilkowski1995explicit}):
\begin{equation}\label{inclusion_exclusion_formula}
S_I[u](\bm{y}) = \sum_{\bm{i}\in I} c_{\bm{i}} \bigotimes_{n=1}^N \mathcal{U}_n^{m(i_n)}(u)(\bm{y}), \quad c_{\bm{i}} \coloneqq \sum_{\substack{{\bm{j}\in \left\{0,1\right\}^N} \\ {\bm{i}+\bm{j}\in I}}} (-1) ^{\vert\bm{j}\vert_1}.
\end{equation} 
The set of collocation points $\HH$ in~\eqref{inclusion_exclusion_formula} and also in~\eqref{abstract_form_discrete_sol} is referred to as \emph{sparse grid} and we will also denote it by $\mathcal{H}_I$ in order to make the dependence on $I$ explicit. 
The \emph{nestedness} of the family of 1D nodes $\mathcal{Y}^n$ makes $S_I[\cdot]$ interpolatory in the collocation nodes (see \cite[proposition 6]{barthelmann2000high})
\begin{equation*}
S_I[u](\bm{y}) = u(\bm{y})\qquad \forall \bm{y}\in\HH_I.
\end{equation*}
Due to this fact, (\ref{abstract_form_discrete_sol}) can be rewritten as 
\begin{equation}
{u_{\mathcal{H}}(x,\bm{z})=}u_I (x, \bm{z}) = S_I[u] (x, \bm{z}) = \sum_{\bm{y}\in \mathcal{H}_I} U_{\bm{y}}(x) L_{\bm{y}} (\bm{z}) \qquad x\in D, \bm{z}\in \Gamma.
\end{equation}

The nestedness is satisfied, e.g., by choosing  \emph{Clenshaw-Curtis (CC)} nodes to construct the sparse grid, i.e.
\begin{equation*}
y^{(m)}_j \coloneqq -\cos{\frac{\pi (j-1)}{m-1}} \qquad \forall j = 1,\ldots ,m,
\end{equation*}
with the \emph{doubling rule}
\begin{equation}\label{doubling_rule}
m(i) \coloneqq 
\begin{cases}
0 \qquad i=0,\\
1 \qquad i=1,\\
2^{i-1} + 1 \qquad i > 1.
\end{cases}
\end{equation}
We will stick with this particular choice for the remainder of this work for simplicity, remark however that other choices are possible (see, e.g.,~\cite{guignard2018posteriori}). { The essential properties of $\mathcal{Y}^n$ used in the proofs below are nestedness $\mathcal{Y}^n\subseteq \mathcal{Y}^{n+1}$ and the fact that the Lebesgue constants of the associated interpolation operators grow sub-exponentially.}

The requirement on the multi-index set $I$ to be downward-closed is needed to ensure that the sum (\ref{definition_SG_interpol}) is actually telescopic. 

Since $u$ is analytic in $\bm{y}$, we may consider the expansion (see again \cite{guignard2018posteriori})
\begin{equation}\label{approx_series_interpol}
u(\bm{y}) = \sum_{\bm{i}\in \N^N} \Delta^{m(\bm{i})}u (\bm{y}) \qquad \textrm{a.{e}. } \bm{y}\in\Gamma
\end{equation}
converging absolutely in $V$. As it will be central in the following discussion, we recall the definition of the \emph{margin of a multi-index set $I$}:
\[\mathcal{M}_I \coloneqq \left\{ \bm{i}\in \N^N : \bm{i}-\bm{e}_n\in I\ \textrm{for some } n\in 1,\ldots, N \ \textrm{such that } i_n > 1\right\}.\] 

\subsection{The adaptive stochastic collocation finite element algorithm} \label{sec:def_adaptive_algo}
The adaptive algorithm employs the error estimator proposed in \cite[Proposition 4.3]{guignard2018posteriori}. 
We recall that $u$  denotes the analytic solution of the problem (\ref{pb:analyitic_problem})
while the discrete solution is $S_I[U] = \sum_{\bm{y}\in \mathcal{H}_I} U_{\bm{y}} L_{\bm{y}}$.
By $U:\Gamma \rightarrow W$, we denote a function that takes the value $U_{\bm{y}}$ on the collocation point $\bm{y}\in \mathcal{H}_I$ (sometimes we will also use the notation $U(\bm{y}) = U_{\bm{y}}$).

The total estimator is composed of a \emph{parametric estimator}
\begin{equation} \label{eq:def_param_estim}
\zeta_{SC, I} \coloneqq \sum_{\bm{i}\in \mathcal{M}_I} \zeta_{\bm{i},I}, \qquad
\zeta_{\bm{i},I} \coloneqq \norm{\Delta^{m(\bm{i})} \left(a\nabla S_I[U]\right)}_{L^{\infty}(\Gamma, {L^2(D)})}
\end{equation}
(the gradient $\nabla$ here acts exclusively on the space variable $x\in D$) as well as a \emph{finite element estimator}
\begin{align} \label{eq:def_FE_estim}
\begin{split}
\eta_{ {\rm FE}, I} & \coloneqq \sum_{\bm{y}\in\HH_I} \eta_{\bm{y}} \norm{L_{\bm{y}}}_{{L^{\infty}(\Gamma)}}, \qquad
\eta_{\bm{y}} \coloneqq \left( \sum_{T\in\mathcal{T}_{\bm{y}}} \eta_{{\bm{y}}, T}^2\right)^{\frac{1}{2}},\\
\eta_{{\bm{y}},T}^2  &\coloneqq h_T^2 \norm{f+\nabla\cdot\left(a(\bm{y}_k) \nabla U_{\bm{y}}\right)}_{L^2(T)}^2
 +  \sum_{e\subset\partial T} h_e\norm{\frac{1}{2} \left[a(\bm{y})\nabla U_{\bm{y}}\cdot \bm{n}_e\right]_{\bm{n}_e}}^2_{L^2(e)},
\end{split}
\end{align}
where $[\cdot]_{\bm{n}_e}$ denotes the jump over the edge (face) in normal direction $\bm{n}_e$.
The combination of both yields a reliable upper bound, i.e.,
\begin{equation} \label{eq:fully_discrete_a_post_estimate}
\norm{u-S_{I}[U]}_{{L^\infty}(\Gamma, V)}  \leq \frac{1}{a_{\rm min}} \left({C}\eta_{ {\rm FE}, I} + \zeta_{SC, I}\right),
\end{equation}
where $a_{\rm min}>0$ appears in the equivalence relation between $H^1_0(D)$ and energy norm
\begin{align*}
a_{\rm min}^{1/2} \norm{v}_{H^1_0(D)} \leq \norm{a(\bm{y})^{\frac{1}{2}} \nabla v}_{L^2(D)} \leq a_{\rm max}^{1/2} \norm{v}_{H^1_0(D)} \quad \text{a.e. }\bm{y}\in\Gamma\text{ for all }v\in H^1_0(D)
\end{align*} 
{and $C>0$ depends only on the shape regularity of $\TT_{\rm init}$.}
We consider the following adaptive algorithm.
\begin{algorithm}[H]
\caption{$u_{\epsilon} \leftarrow$ \texttt{SCFE}($\epsilon, \theta, \alpha, \TT_{\rm init}$)}
\label{algo:SCFE}
\hspace*{\algorithmicindent} 
\begin{algorithmic}[1] 
\State $I_{-1} \coloneqq \emptyset$
\State $I_{0} \coloneqq \left\{\bm{1}\right\}$
\State compute finite element solution $U_{0,\bm{y}}$ on $\TT_{\rm init}$ for all $\bm{y}\in\mathcal{H}_{I_0}$
\For{$\ell=0,1,2,...$}
	\State $U_\ell \leftarrow$ \texttt{Refine\_FE\_spaces} $(I_\ell, U_\ell, \alpha, \theta)$ \label{algo:line:FE_ref}
	\State compute parametric estimators $\left(\zeta_{\bm{i}, I_\ell}\right)_{\bm{i}\in\mathcal{M}_{I_\ell}}$, $\zeta_{SC,I_\ell}$
	\State compute finite element estimator ${\eta}_{ {\rm FE}, I_\ell}$
	\If{{$a_{\rm min}^{-1}$}$\left(\zeta_{SC,I_\ell} + {C}\eta_{ {\rm FE}, I_\ell} \right)< \epsilon$} \label{algo:line:termination}
		\State \Return{$u_{\epsilon} \leftarrow S_{I_\ell} [U_\ell]$}
	\EndIf
	\State $(U_{\ell+1}, I_{\ell+1}) \leftarrow $ \texttt{Refine\_parameter\_space}($I_\ell, U_\ell,\left(\zeta_{\bm{i}, I_\ell}\right)_{\bm{i}\in\mathcal{M}_{I_\ell}}, \TT_{\rm init}$) \label{algo:line:param_ref}
\EndFor
\end{algorithmic}
\end{algorithm}
The algorithm consists of alternating between enriching the polynomial space $\mathbb{P}_I$ {in Line \ref{algo:line:param_ref} (Alg.~\ref{algo:SCFE})} and refining the finite element spaces corresponding to each collocation point \emph{independently from each other} {in Line \ref{algo:line:FE_ref} (Alg.~\ref{algo:SCFE})}. 
The intuitive idea behind this choice is the following: In order for the parameter enrichment routine to make a meaningful choice, the finite element solution in the collocation points has to be "close enough" to the exact solution. 
The algorithm terminates when the a-posteriori estimator falls below a given tolerance $\epsilon>0$ in Line \ref{algo:line:termination}~(Alg.~\ref{algo:SCFE}). {The reliable upper bound (\ref{eq:fully_discrete_a_post_estimate}) guarantees that the error of the discrete solution is also bounded by $\epsilon$.}

The sub-routine \texttt{Refine\_FE\_spaces} reads:
{\begin{algorithm}[H]
\caption{$U \leftarrow$\texttt{Refine\_FE\_spaces} ($I, U, \alpha, \theta$)}
\label{algo:refine_FE}
\hspace*{\algorithmicindent} 
\begin{algorithmic}[1]
\State compute finite element estimator $\left(\eta_{\bm{y}}\right)_{\bm{y}\in\mathcal{H}_I}$, $\eta_{ {\rm FE}, I}$
\State compute parametric estimator $\zeta_{SC,I}$
\State ${\rm Tol} \coloneqq \alpha \frac{1}{{\sum_{\bm{i}\in\mathcal{M}_I}\prod_{n=1}^N i_n }} \zeta_{SC, I}$  \label{algo_FE_ref:line:tolerance}
\While{$\eta_{ {\rm FE}, I} > {\rm Tol}$} \label{algo_FE_ref:line:loop}
	\State find minimal $\mathcal{K} \subseteq \bigsqcup_{\bm{y}\in \HH} \TT_{\bm{y}}:= \bigcup_{\bm{y}\in\mathcal{H}} \bigcup_{T_{\bm{y}}\in \TT_{\bm{y}}} (\bm{y},T_{\bm{y}})$ such that 
	\begin{align*}
	 \sum_{(\bm{y},T_{\bm{y}})\in\mathcal{K}} \eta_{\bm{y},T_{\bm{y}}}^2\norm{L_{\bm{y}}}_{L^\infty(\Gamma)} \geq \theta\eta_{ {\rm FE}, I}^2
	 \end{align*}
	 \label{algo_FE_ref:line:dorfler_marking}
	 \For{$\bm{y}\in\HH$} \label{algo_FE_reg:NVB}
	 \State refine $\mathcal{T}_{\bm{y}}$ with $\mathcal{K}_{\bm{y}}:=\{T\in\mathcal{T}_{\bm{y}}\,:\,(\bm{y},T)\in\mathcal{K}\}$ as marked elements
	 \EndFor
 	\State compute $U_{\bm{y}}$ over $\mathcal{T}_{\bm{y}}$
	\State compute finite element estimator $\left(\eta_{\bm{y}}\right)_{\bm{y}\in\mathcal{H}_I}$, $\eta_{ {\rm FE}, I}$ \label{algo_FE_ref:line:estim}
	\State compute parametric estimator $\zeta_{SC,I}$
	\State ${\rm Tol} \leftarrow \alpha \frac{1}{{\sum_{\bm{i}\in\mathcal{M}_I}\prod_{n=1}^N i_n}} \zeta_{SC, I}$ \label{algo_FE_ref:line:tolerance_inloop}
\EndWhile
\end{algorithmic}
\end{algorithm}}

The aim of this sub-routine is to refine the finite element solutions in the collocation points until the finite element estimator falls below the tolerance defined in Line \ref{algo_FE_ref:line:tolerance} { (Alg.~\ref{algo:refine_FE})}.
We use newest-vertex-bisection with mesh closure for mesh refinement (see, e.g.,~\cite{stevenson08} for further details). 
Observe that, since the tolerance depends on the parametric estimator $\zeta_{SC, I}$, which in turn depends on the discrete solution, the tolerance needs to be re-computed at every finite-element refinement. {This is necessary as one parametric refinement might uncover new features of the solution which need to be resolved in the finite-element refinement step. In practical computations, this rarely happens after the first few iterations of the algorithm. Note also that the linear convergence result in Proposition~\ref{prop:linconv} shows that starting from the initial mesh does not significantly increase the computational cost (at worst case, it contributes logarithmically). It might be possible to prove convergence without the scaling $1/\sum_{\bm{i}\in\mathcal{M}_I}\prod_{n=1}^N i_n$ of the tolerance in Line~\ref{algo_FE_ref:line:tolerance} of Algorithm~\ref{algo:SCFE} (the experiments in Section~\ref{sec:numerics} indicate that it is not necessary) however with the present techniques we didn't find a way to do that.}

In Section \ref{sec:conv_fully_discr_algo} we will prove that the sub-routine terminates (i.e. that the finite element estimator eventually falls below the tolerance) and that the choice of tolerance made in Line \ref{algo_FE_ref:line:tolerance} { (Alg.~\ref{algo:refine_FE})} is a sufficient condition for convergence.

Finally, the sub-routine \texttt{Refine\_parameter\_space} reads as follows:
\begin{algorithm}[H]
\caption{($U' ,I') \leftarrow$\texttt{Refine\_parameter\_space} ($I$, $U$, $\left(\zeta_{\bm{i}, I}\right)_{\bm{i}\in\mathcal{M}_{I}}$, $\TT_{\rm init}$)}
\label{algo:Refine_param_space}
\hspace*{\algorithmicindent} 
\begin{algorithmic}[1]
\State $\bm{i} \coloneqq \argmax_{\bm{i}\in\mathcal{M}_I} \mathcal{P}_{\bm{i}, I}$ \label{algo_param_enrich:line:prof_max}
\State $I' \coloneqq I\cup A_{\bm{i}, I}$ \label{algo_param_enrich:line:enlarge_I}
\State $U'\leftarrow $ update $U$ by computing finite element solution $U_{\bm{y}}$ on $\TT_{\rm init}$ for all $\bm{y}\in\mathcal{H}_{I'}\setminus \mathcal{H}_{I}$ \label{algo_param_enrich:line:comp_new_sols}
\end{algorithmic}
\end{algorithm}

The aim here is to enrich the polynomial space $\mathbb{P}_I$ as done in \cite[Algorithm 1]{guignard2018posteriori}. 
At each iteration, the algorithm enlarges the multi-index set $I$ by adding multi-indices from the margin of $I$ depending on the values of the pointwise error estimators $\left(\zeta_{\bm{i}, I}\right)_{\bm{i}\in I}$.
More precisely, in Line \ref{algo_param_enrich:line:prof_max} { (Alg.~\ref{algo:Refine_param_space})} we select a \emph{profit maximizer}, i.e. a multi index in the margin that maximizes a given \emph{profit function} $\mathcal{P}_{\bm{i}, I}$ (see below for some examples):
\begin{equation} \label{profit_maximizer}
\bm{i} = \argmax_{\bm{i}\in\mathcal{M}_I} \mathcal{P}_{\bm{i}, I}
\end{equation}
(in case more than one multi-index maximizes the profit, we pick the one that comes first in the lexicographic ordering).\\
Then, in Line \ref{algo_param_enrich:line:enlarge_I} { (Alg.~\ref{algo:Refine_param_space})} $I$ is enlarged by adding $A_{\bm{i}, I}$, the smallest subset of $\mathcal{M}_{I}$ containing $\bm{i}$ such that $I \cup A_{\bm{i}, I}$ is downward-closed. 
Finally, in Line \ref{algo_param_enrich:line:comp_new_sols} { (Alg.~\ref{algo:Refine_param_space})} we compute the finite element solution over the default mesh $\TT_{\rm init}$ corresponding to each new collocation point, while preserving the old ones.

We analyze two possible choices of profit:
\begin{itemize}
\item Workless profit: 
\begin{equation} \label{def:workless_profit}
\mathcal{P}_{\bm{i}, I} \coloneqq \sum_{\bm{j}\in A_{\bm{i},I}} \zeta_{\bm{j}, I}
\end{equation}
\item Profit with work: 
\begin{equation} \label{def:profit_work}
\mathcal{P}_{\bm{i}, I} \coloneqq \frac{\sum_{\bm{j}\in A_{\bm{i}, I}} \zeta_{\bm{j}, I}}{\sum_{\bm{j}\in A_{\bm{i}, I}} W_{\bm{j}}}
\end{equation}
where the \emph{work} is defined as $W_{\bm{j}} \coloneqq \prod_{n=1}^N \left( m(j_n) - m(j_{n-1}) \right)$.
\end{itemize} 

\section{Convergence of the parametric enrichment algorithm}\label{sec:paramconv}
We examine the convergence properties of a simplified version of Algorithm \ref{algo:SCFE}, also discussed in \cite{guignard2018posteriori}. In the present case, we suppose to be able to sample the function $u:\Gamma\rightarrow V$ for any fixed parameter $\bm{y}\in\Gamma$. Thus, a discrete solution is given by the sparse grid interpolant $S_I[u]\in\mathbb{P}_I(\Gamma, V)$, for a downward-close multi-index set $I\subset \N^N$.
Moreover, the a-posteriori estimator,
{which in the fully discrete setting was the sum of parametric estimator (\ref{eq:def_param_estim}) and finite element estimator (\ref{eq:def_FE_estim}),}
simplifies to $\zeta_{SC, I} \coloneqq \sum_{\bm{i}\in\mathcal{M}_I} \zeta_{\bm{i}, I}$ (no additional term accounting for the finite element discretization) where the pointwise estimator is
\begin{equation*} 
\zeta_{\bm{i}, I} \coloneqq \norm{\Delta^{m(\bm{i})} \left(a\nabla S_I[u]\right)}_{L^\infty(\Gamma, L^2(D))}.
\end{equation*}
In this setting, the
{reliable upper bound (\ref{eq:fully_discrete_a_post_estimate}) simplifies to}: $\norm{u - S_{I}[u]}_{L^{\infty}(\Gamma, V)}  \lesssim \zeta_{SC, I}.$
Workless-profit  and profit with work are defined analogously to (\ref{def:workless_profit}) and (\ref{def:profit_work}) respectively.
This  simplified version of the algorithm reads:
\begin{algorithm}[H]
\caption{$u_{\epsilon} \leftarrow SC(\epsilon)$}
\label{algo:SC_adaptive_algo}
\hspace*{\algorithmicindent} 
\begin{algorithmic}[1] 
\State $I \coloneqq \left\{\bm{1}\right\}$
\State $u_{\epsilon} \coloneqq S_I[u]$
\State compute $\zeta_{SC, I}$
\While{$\zeta_{SC, I} \geq \epsilon$}
	\State $\bm{i} \coloneqq  \argmax_{\bm{i}\in\mathcal{M}_I} \mathcal{P}_{\bm{i}, I}$
	\State $I \leftarrow I\cup A_{\bm{i}, I}$
	\State $u_{\epsilon} \leftarrow S_I[u]$
	\State compute new a-posteriori estimator $\zeta_{SC, I}$
\EndWhile
\end{algorithmic}
\end{algorithm}
\subsection{Preliminary results}
\subsubsection{Stability and convergence of the hierarchical surplus $\Delta^{m(\textbf{i})}$}
In this section we recall basic results on the hierarchical surplus operator $\Delta^{m(\bm{i})}$ (see for instance \cite{nobile2008sparse}). The analysis is carried out in the $L^\infty(\Gamma, V)$ norm as it is the most "stringent" among the $L^p(\Gamma, V)$ norms for $p\in [1,\infty]$. {We note that all the arguments below work with other choices of $p$.

We will first state 1D results (corresponding to the case $N=1$).
For $i\in\N$, the Lebesgue constant  $\lambda_{m(i)}$ of the interpolant $\mathcal{U}^{m(i)}$ satisfies the relation
\begin{equation} \label{stab_abstract}
\norm{\mathcal{U}^{m(i)}v}_{L^\infty(\Gamma, V)} \leq \lambda_{m(i)} \norm{v}_{L^\infty(\Gamma, V)} \qquad \forall v\in C^0(\Gamma, V).
\end{equation}
Moreover, since CC nodes and the \emph{doubling rule} (\ref{doubling_rule}) are used, it can be estimated as (see \cite{dzjadyk1983asymptotics})
\begin{equation}\label{estim_lebesgue_const}
\lambda_{m(i)}\leq 2i.
\end{equation}
Therefore, the relation (\ref{stab_abstract}) can be rewritten explicitly with respect to $i$ as
\begin{equation}\label{stab_explicit_i}
\norm{\mathcal{U}^{m(i)}v}_{L^\infty(\Gamma, V)} \lesssim i \norm{v}_{L^\infty(\Gamma, V)} \qquad \forall v\in C^0(\Gamma, V).
\end{equation}
The estimate~\eqref{stab_explicit_i} can be used to derive a stability estimate for the detail operator
\begin{equation*}
\norm{\left(\mathcal{U}^{m(i)} - \mathcal{U}^{m(i-1)}\right) v}_{L^\infty(\Gamma, V)}
\lesssim i \norm{v}_{L^\infty(\Gamma, V)}.
\end{equation*}
Moving to the general case $N\in\N$, we can now exploit the tensor product structure of $\Gamma\subset \R^N$ to obtain a stability estimate for the hierarchical surplus operator
\begin{equation}\label{stab_hierarchical_surplus}
\norm{\Delta^{m(\bm{i})}v}_{L^{\infty}(\Gamma, V)} \lesssim \left(\prod_{n=1}^N i_n\right) \norm{v}_{L^\infty(\Gamma, V)}.
\end{equation}
Since this estimate will be employed several times in the rest of the paper, we denote this bound on the norm of $\Delta^{m(\bm{i})}$ by
\begin{equation} \label{def:bound_stab_const}
\Lambda_{\bm{i}} \coloneqq \prod_{n=1}^N i_n.
\end{equation}

We derive another estimate of $\norm{\Delta^{m(\bm{i})} u}_{L^{\infty}(\Gamma, V)}$ that relies on the fact that $u:\Gamma\rightarrow V$ is analytic with respect to $\bm{y}$.
The tensor product structure of $\Gamma$ allows us again to start from a 1D results and then generalize to $N$ dimensions. 
We state a result that relates the best approximation error in $\mathbb{P}_{m}(\Gamma, V)$ to the size of the domain of the holomorphic extension of $u$ \eqref{def_anal_ext_domain}.
\begin{lemma}[{\cite[Lemma~4.4]{babuvska2007stochastic}}]
If $v\in C^0(\Gamma, V)$ and it exists $\tau>0$ such that $v$ admits an analytic extension to $\Sigma(\Gamma, \tau)$ (defined in (\ref{def_anal_ext_domain})), 
then for $m\in \N$ 
\begin{equation}
E_{m}(v) \coloneqq 
\min_{w\in \mathbb{P}_{m} (\Gamma, V)} \norm{v-w}_{L^\infty(\Gamma, V)} \leq 
\frac{2}{e^{\sigma}-1} e^{-\sigma m} \max_{z\in \Sigma(\Gamma, \tau)}\norm{v(z)}_{V}
\end{equation}
where $\sigma \coloneqq \log{\left( \frac{2\tau}{\vert \Gamma\vert} + \sqrt{1+\frac{4\tau^2}{\vert\Gamma\vert^2}}\right)} >0$. \hfill \qed
\end{lemma}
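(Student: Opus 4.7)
The plan is to reduce the statement to the standard one-dimensional case $\Gamma=[-1,1]$ via an affine change of variables and then invoke a classical Bernstein-type approximation result for holomorphic functions. Under the map sending $\Gamma$ onto $[-1,1]$, the analyticity domain $\Sigma(\Gamma,\tau)$ is transformed into the $\tilde\tau$-neighborhood of $[-1,1]$ with $\tilde\tau = 2\tau/\vert\Gamma\vert$, while both $E_m$ and the $L^\infty$ norm are invariant under this rescaling; thus it suffices to establish the bound in normalized form.

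The principal tool I would employ is the following Bernstein-type estimate on $[-1,1]$: if $w:[-1,1]\to V$ extends holomorphically to the closed Bernstein ellipse $E_\rho$ (the image of the circle $\{|\zeta|=\rho\}$ under $\zeta\mapsto(\zeta+\zeta^{-1})/2$) for some $\rho>1$, then
\begin{equation*}
E_m(w) \leq \frac{2}{\rho-1}\,\rho^{-m}\max_{z\in E_\rho}\|w(z)\|_V.
\end{equation*}
This follows by expanding $w$ into a $V$-valued Chebyshev series, bounding the coefficients through a Bochner--Cauchy integral along $E_\rho$, and summing the geometric tail from index $m+1$. The Banach-space-valued character of the integrand introduces no difficulty since $V$ is a Hilbert space.

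Next, I would calibrate $\rho = e^{\sigma}$ with $\sigma$ as in the statement. The identity $\rho-\rho^{-1}=2\tilde\tau$ is equivalent to $\rho=\tilde\tau+\sqrt{1+\tilde\tau^2}$, showing that this choice makes the semi-minor axis of $E_\rho$ equal to $\tilde\tau$. The crucial geometric verification is then the inclusion $E_\rho\subset\Sigma([-1,1],\tilde\tau)$: the vertical extent of $E_\rho$ equals $\tilde\tau$ by construction, and at the horizontal tips one checks $\sqrt{1+\tilde\tau^2}-1\le\tilde\tau$, which is elementary. This inclusion yields $\max_{z\in E_\rho}\|w(z)\|_V\le\max_{z\in\Sigma(\Gamma,\tau)}\|v(z)\|_V$ after undoing the rescaling, and substituting $\rho=e^{\sigma}$ into the Bernstein bound produces exactly the claimed constant $2/(e^\sigma-1)$ and decay rate $e^{-\sigma m}$.

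The main obstacle is the geometric step, namely verifying that the Bernstein ellipse prescribed by our choice of $\rho$ is entirely contained in the stadium-shaped neighborhood where $v$ is known to be holomorphic; everything else is a routine combination of one-dimensional complex analysis and an affine rescaling argument.
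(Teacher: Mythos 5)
The paper does not actually prove this lemma; it is cited from Babu\v{s}ka--Nobile--Tempone \cite{babuvska2007stochastic}, whose argument is precisely the Bernstein-ellipse approach you describe. So your overall strategy matches the source: the affine rescaling to $[-1,1]$, the $V$-valued Chebyshev expansion with a Cauchy-integral bound on the coefficients, the geometric-series tail giving the factor $2/(\rho-1)\,\rho^{-m}$, and the calibration $\rho = e^\sigma$, i.e.\ $\rho-\rho^{-1}=2\tilde\tau$ with $\tilde\tau = 2\tau/|\Gamma|$, are all correct.

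The one step that is under-specified is the geometric containment $E_\rho\subset\Sigma([-1,1],\tilde\tau)$. Checking the semi-minor axis and the position of the horizontal vertices does not by itself establish containment: the stadium boundary is flat ($y=\pm\tilde\tau$) only for $|x|\le 1$ and then becomes a circular arc of radius $\tilde\tau$ centred at $\pm 1$, so the ellipse could in principle poke out through the rounded corners even with both its horizontal tips strictly inside. The comparison that is actually needed is over the range $1\le|x|\le\sqrt{1+\tilde\tau^2}$, where the ellipse satisfies $y^2=\tilde\tau^2\bigl(1-x^2/(1+\tilde\tau^2)\bigr)$ and the arc satisfies $y^2=\tilde\tau^2-(|x|-1)^2$; the required inequality reduces, after clearing denominators, to
\[
|x|^2 - 2(1+\tilde\tau^2)\,|x| + (1+\tilde\tau^2)\le 0,
\]
which holds on that range because this convex quadratic in $|x|$ is negative at both endpoints ($-\tilde\tau^2$ at $|x|=1$, and $2(1+\tilde\tau^2)(1-\sqrt{1+\tilde\tau^2})$ at $|x|=\sqrt{1+\tilde\tau^2}$). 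Adding this computation closes the gap; the rest of your sketch is correct and standard.
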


Since $\mathcal{U}^{m(i)}$ is exact on $\mathbb{P}_{m(i)-1}(\Gamma, V)$, its error can be expressed as (see \cite{barthelmann2000high})
\begin{equation*}
\norm{u - \mathcal{U}^{m(i)}u}_{L^\infty(\Gamma, V)} \leq \left( 1 + \lambda_{m(i)}\right) E_{m(i)-1}(u).
\end{equation*}
Remembering (\ref{estim_lebesgue_const}) and the previous lemma, the error estimate for $\mathcal{U}^{m(i)}$ can be simplified as 
\begin{equation*}
\norm{u - \mathcal{U}^{m(i)}u}_{L^\infty(\Gamma, V)} \lesssim ie^{-\sigma m(i)} \max_{z\in \Sigma(\Gamma, \tau)}\norm{u(z)}_{V}. 
\end{equation*}
This estimate can be applied to the detail operator after a triangle inequality to obtain
\begin{equation}\label{estim_detail}
\norm{\Delta^{m(i)} u}_{L^\infty(\Gamma, V)}  \lesssim i e^{-\sigma m(i-1)} \max_{z\in \Sigma(\Gamma, \tau)}\norm{u(z)}_{V}.
\end{equation}
{Applying (\ref{estim_detail}) to the multidimensional case (by considering one component at a time) leads to the following estimate:}
\begin{lemma} \label{lemma:estim_hierarchical_surplus}
For $\bm{i}\in \N^N$, the hierarchical surplus {of an analytic function $u:\Gamma\rightarrow V$} satisfies
\begin{equation}\label{estim_hierarchical_surplus}
\norm{\Delta^{m(\bm{i})}(u)}_{L^{\infty}(\Gamma, V)} \lesssim
\Lambda_{\bm{i}} e^{-\sigma \vert m(\bm{i}-\bm{1})\vert_1 }.
\end{equation}
{where
\begin{equation*}
\sigma \coloneqq \min_{n\in 1,\ldots, N} \sigma_n, \qquad \sigma_n \coloneqq \log{\left( \frac{2\tau_n}{\vert \Gamma_n\vert} + \sqrt{1+\frac{4\tau_n^2}{\vert\Gamma_n\vert^2}}\right)},
\end{equation*}}
{where the hidden constant depends on $u$.}
\end{lemma}

\subsubsection{A simplified formula for $\zeta_{\textbf{i}, I}$}
In the present section we highlight elementary facts on the zeros of $\Delta^{m(\bm{j})}u$ and the kernel of $\Delta^{m(\bm{j})}$.
These facts are combined to show that the operator $\Delta^{m(\textbf{i})} \left(a \nabla \Delta^{m(\textbf{j})}\right)$ is identically zero unless the multi-indices $\bm{i}$, $\bm{j}\in \N^N$ are ``close to each other" 
(see also~\cite[Proposition 4.3]{guignard2018posteriori} for partial results in this direction). 

We will denote by $\mathcal{R}_{\bm{i}}\subset \N^N$ the axis-aligned rectangle with opposite vertices $\bm{1}$ and $\bm{i}$:
\begin{align}\label{def:rectangle}
\mathcal{R}_{\bm{i}} \coloneqq \left\{\bm{j}\in\N^N : j_n \leq i_n\ \forall n\in 1,\ldots,\N \right\}.
\end{align}

{\begin{theorem} \label{thm:indices_condition_for_zero_error_estimator}
Given, $\bm{i}$, $\bm{j}\in \N^N$, if one of the following two conditions
\begin{align}\label{eq:cond1}
\exists n\in1,\ldots, N : i_n < j_n
\end{align}
or
\begin{align}\label{eq:cond2}
\forall n\in1,\ldots, N : \bm{j} + \bm{e}_n < \bm{i},
\end{align}
is satisfied, then
\begin{align}\label{eq:zero}
\Delta^{m(\bm{i})}\left(a\nabla\Delta^{m(\bm{j})} u \right)\equiv 0\qquad \forall u\in C^0(\Gamma, V).
\end{align}
\end{theorem}}
{
\begin{proof}
Fix $\bm{y}\in \mathcal{Y}^{m(\bm{i})}$. By~\eqref{eq:cond1} and the nestedness of CC nodes, it exists $n\in 1,\ldots,N$ such that $y_n\in\mathcal{Y}^{m(j_n-1)}$.
This implies that $\Delta^{m(\bm{j})}u(\bm{y}) = 0$, as $y_n$ is an interpolation point for both $\mathcal{U}_n^{m(j_n)}$ and $\mathcal{U}_n^{m(j_n-1)}$.
Thus, recalling that $\nabla$ acts on the space variable $x$ only,
\begin{align}\label{eq:zeros_arg_hiersurp}
a(\bm{y}) \nabla \Delta^{m(\bm{j})}u(\bm{y}) = 0\qquad \forall \bm{y}\in \mathcal{Y}^{m(\bm{i})}.
\end{align}
Next, observe that a hierarchical surplus can be written as a linear combination of Lagrange interpolants
\[ \Delta^{m(\bm{i})} = \sum_{\bm{\alpha} \in \left\{0, 1\right\}^N} \left(-1\right)^{\vert\bm{\alpha}\vert}\mathcal{U}^{m(\bm{i}-\bm{\alpha})}.\]
By the nestedness of CC nodes, \eqref{eq:zeros_arg_hiersurp} implies that $a\nabla \Delta^{m(\bm{j})}u$ is in the kernel of each of the interpolants $\mathcal{U}^{m(\bm{i}-\bm{\alpha})}$, $\bm{\alpha}\in\{0,1\}^N\setminus \{0\}$, which in turn implies \eqref{eq:zero}.
To show that \eqref{eq:cond1} also implies \eqref{eq:zero}, first observe that
\begin{align}\label{eq:argument_is_polynomial}
a\nabla \Delta^{m(\bm{j})} u \in \sum_{n=1}^N \mathbb{P}_{m(\bm{j})-\bm{1}+\bm{e}_n} 
= \mathbb{P}_{\left\{\bm{j}\right\}\cup \mathcal{M}_{\left\{\bm{j}\right\}}}
\subseteq \mathcal{P}_{\mathcal{R}_{\bm{i}} \setminus \left\{\bm{i}\right\}},
\end{align}
where the last inclusion is due to assumption \eqref{eq:cond2}.
Next, observe that a hierarchical surplus can be written as a difference of sparse grid interpolants:
$\Delta^{m(\bm{i})} = S_{\mathcal{R}_{\bm{i}}} - S_{\mathcal{R}_{\bm{i}}\setminus \left\{\bm{i}\right\}}$. 
This implies that $\mathbb{P}_{\mathcal{R}_{\bm{i}} \setminus \left\{\bm{i}\right\}}$ is a subset of the kernel of $\Delta^{m(\bm{i})}$, as both $S_{\mathcal{R}_{\bm{i}}}$ and $S_{\mathcal{R}_{\bm{i}}\setminus \left\{\bm{i}\right\}}$ are exact on this space. Together with \eqref{eq:argument_is_polynomial}, this concludes the proof.
\end{proof}}

\begin{remark} \label{rk:simplified_ptwise_error_estim_operator}
The previous theorem can be used to simplify the computation of $\zeta_{\bm{i}, I}$. Consider a multi-index set $I\subset \N^N$ and $\bm{i}\in \mathcal{M}_I$. Define
\[J_{\bm{i}, I} \coloneqq \left\{ \bm{j}\in I : \exists n\in1,\ldots, N : \bm{j} = \bm{i} - \bm{e}_n \right\}.\]
Then, thanks to the previous theorem:
\begin{equation*} 
\Delta^{m(\bm{i})}\left(a\nabla S_I \left[u\right]\right) 
= \Delta^{m(\bm{i})}\left(a\nabla \sum_{\bm{j}\in I} \Delta^{m(\bm{j})} u \right) 
= \Delta^{m(\bm{i})}\left(a\nabla \sum_{\bm{j}\in J_{\bm{i}, I}} \Delta^{m(\bm{j})} u \right),
\end{equation*}
so
\begin{equation} \label{formula:simplified_ptwise_error_estim_operator}
\zeta_{\bm{i}, I} = \norm{\Delta^{m(\bm{i})}\left(a\nabla\sum_{\bm{j}\in J_{\bm{i}, I}} \Delta^{m(\bm{j})} u \right)}_{L^{\infty}(\Gamma, {L^2(D)})}.
\end{equation}
See Figure \ref{fig:simplified_estim} for a graphical representation.
\end{remark}

\begin{figure}[H]
\centering
\begin{tikzpicture}
\coordinate (y) at (0,5);
\coordinate (x) at (4,0);
\draw[axis] (y) -- (0,0) --  (x);
\foreach \x in {1,2,3} {%
    \draw ($(\x,0) + (0,-2pt)$) -- ($(\x,0) + (0,2pt)$)
        node [below] {$\x$};
}
\foreach \y in {1,2,3,4} {%
    \draw ($(0,\y) + (-2pt,0)$) -- ($(0,\y) + (2pt,0)$)
        node [left] {$\y$};
}
\node[] at (4, -0.3) {$j_1$};
\node[] at (-0.3, 5) {$j_2$};

\draw [fill] (1, 1) circle (0.1);
\draw [fill] (1, 2) circle (0.1);
\draw [fill] (1, 3) circle (0.1);
\draw [fill] (2, 1) circle (0.1);
\draw [fill] (2, 2) circle (0.1);
\draw [fill] (2, 3) circle (0.1);
\draw [fill] (1, 4) circle (0.1);
\draw [fill] (2, 4) circle (0.1);
\draw [fill] (3, 1) circle (0.1);
\draw [fill] (3, 1) circle (0.1);
\draw [fill] (3, 2) circle (0.1);
\draw [red, ultra thick] (3, 3) circle (0.1);

\coordinate (A) at (2.5, 2.5);
\coordinate (B) at (2.5, 1.6);
\coordinate (C) at (3.3, 1.6);
\coordinate (D) at (3.3, 2.7);
\coordinate (E) at (2.7, 2.7);
\coordinate (F) at (2.7, 3.3);
\coordinate (G) at (1.6, 3.3);
\coordinate (H) at (1.6, 2.5);
\draw[blue, dashed, ultra thick] (A) -- (B) -- (C) -- (D) -- (E) -- (F) -- (G) -- (H) -- (A);
\node[above right] at (3, 3) {{\color{red} $\bm{i}$}};
\node[] at (3.7, 1.5) {{\color{blue} $J_{\bm{i}, I}$}};
\end{tikzpicture}
\caption{Graphical representation of the simplified computation of $\zeta_{\bm{i}, I}$ from (\ref{formula:simplified_ptwise_error_estim_operator}).
{We consider $N=2$ parameters, each point in the plot corresponds to an element $\bm{j}=(j_1,j_2)\in \N^2$, where on the x-axis we represent $j_1$ and on the y-axis $j_2$.}
Filled dots represent $I$, the red hollow one is $\bm{i}\in\mathcal{M}_I$.
The blue dashed line encircles the {multi-indices} in $J_{\bm{i}, I}$, i.e. the only relevant ones in $I$ for the computation of $\zeta_{\bm{i}, I}${, as explained in Remark \ref{rk:simplified_ptwise_error_estim_operator}}.}
\label{fig:simplified_estim}
\end{figure}

\subsubsection{A priori estimates for estimators and index sets}
\begin{proposition} \label{prop:estim_ptwise_estimator}
Given $u:\Gamma\rightarrow V$ analytic, a multi-index set $I\subset \N^N$ and $\bm{i}\in \mathcal{M}_I$, the {pointwise} error estimator can be bounded as 
\[\zeta_{\bm{i}, I} \lesssim  N \Lambda_{\bm{i}}^2 e^{-\sigma \vert m(\bm{i}-\bm{1})\vert_1},\]
where $\Lambda_{\bm{i}}$ is defined in (\ref{def:bound_stab_const}).
\end{proposition}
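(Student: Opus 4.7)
The plan is to combine the simplified expression for $\zeta_{\bm{i}, I}$ from Remark~\ref{rk:simplified_ptwise_error_estim_operator} with the stability estimate~\eqref{stab_hierarchical_surplus} for the hierarchical surplus operator and the analytic decay estimate from Lemma~\ref{lemma:estim_hierarchical_surplus}.

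Concretely, Remark~\ref{rk:simplified_ptwise_error_estim_operator} reduces the quantity of interest to
\[
\zeta_{\bm{i}, I} = \norm{\Delta^{m(\bm{i})}\Bigl(a \nabla \sum_{\bm{j}\in J_{\bm{i},I}} \Delta^{m(\bm{j})} u\Bigr)}_{L^\infty(\Gamma, V)},
\]
with $J_{\bm{i}, I} \subset \{\bm{i}-\bm{e}_n : n = 1,\ldots, N\}$, and in particular $\snorm{J_{\bm{i}, I}}\leq N$. I would first apply the triangle inequality to split this into at most $N$ terms of the form $\norm{\Delta^{m(\bm{i})}(a\nabla \Delta^{m(\bm{j})} u)}_{L^\infty(\Gamma, V)}$. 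For each such term the plan is: (i) invoke the stability estimate~\eqref{stab_hierarchical_surplus}, which, though stated for $V$-valued functions, applies verbatim to $L^2(D)^d$-valued ones, to extract a factor $\Lambda_{\bm{i}}$; (ii) use the uniform bound $a\leq a_{\max}$ to replace $\norm{a\nabla w}_{L^2(D)}$ by $a_{\max}\norm{w}_V$; and (iii) apply Lemma~\ref{lemma:estim_hierarchical_surplus} to $\Delta^{m(\bm{j})} u$ to obtain the exponential decay $\Lambda_{\bm{j}} e^{-\sigma \snorm{m(\bm{j}-\bm{1})}_1}$.

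Collecting the bounds across the $\leq N$ summands and using $\Lambda_{\bm{j}} \leq \Lambda_{\bm{i}}$ (since $\bm{j}=\bm{i}-\bm{e}_n$ only decreases one component of $\bm{i}$ by one) will produce the announced bound, with the factor $N$ coming from the cardinality of $J_{\bm{i},I}$ and the factor $\Lambda_{\bm{i}}^2$ coming from one application of stability together with the polynomial prefactor in the analytic bound. The delicate point---and the main technical obstacle---is reconciling the exponent $\snorm{m(\bm{j}-\bm{1})}_1$ produced naturally by Lemma~\ref{lemma:estim_hierarchical_surplus} with the target exponent $\snorm{m(\bm{i}-\bm{1})}_1$, which under the doubling rule differ by the single coordinate jump $m(i_n-1)-m(i_n-2)$. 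A cleaner alternative that sidesteps this bookkeeping is to apply Lemma~\ref{lemma:estim_hierarchical_surplus} directly to the analytic function $a\nabla u$ (holomorphic on $\Sigma(\Gamma,\bm{\tau})$ since $a$ is affine) to obtain $\norm{\Delta^{m(\bm{i})}(a\nabla u)}_{L^\infty}\lesssim \Lambda_{\bm{i}} e^{-\sigma\snorm{m(\bm{i}-\bm{1})}_1}$ with the sharp exponent, and then control the residual $\Delta^{m(\bm{i})}(a\nabla(u - S_I[u]))$ using Theorem~\ref{thm:indices_condition_for_zero_error_estimator} together with stability, which restricts the non-zero contributions to a small neighborhood of $\bm{i}$.
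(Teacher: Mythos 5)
Your main route reproduces the paper's argument exactly: simplify $\zeta_{\bm{i},I}$ via Remark~\ref{rk:simplified_ptwise_error_estim_operator}, split by the triangle inequality over the $\leq N$ indices in $J_{\bm{i},I}$, peel off one factor $\Lambda_{\bm{i}}$ by stability of $\Delta^{m(\bm{i})}$ and boundedness of $a$, and bound $\norm{\Delta^{m(\bm{i}-\bm{e}_n)}\nabla u}$ by the analytic decay of Lemma~\ref{lemma:estim_hierarchical_surplus}. What you flag as the ``delicate point'' is in fact a real gap in the paper's own final $\lesssim$: the argument produces $e^{-\sigma\snorm{m(\bm{i}-\bm{e}_n-\bm{1})}_1}$, and under the doubling rule
\begin{equation*}
\snorm{m(\bm{i}-\bm{1})}_1 - \snorm{m(\bm{i}-\bm{e}_n-\bm{1})}_1 = m(i_n-1)-m(i_n-2) = 2^{i_n-3}\quad\text{for }i_n\geq 3,
\end{equation*}
which is unbounded in $i_n$, so no implicit constant can absorb it. The bound that the paper actually uses downstream (and that the argument genuinely delivers) replaces $\sigma$ by $\sigma/2$, using $m(j-1)-m(j-2)\leq\tfrac12 m(j-1)\leq\tfrac12\snorm{m(\bm{i}-\bm{1})}_1$.

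Your proposed ``cleaner alternative'' does not sidestep the bookkeeping. Writing $a\nabla S_I[u] = a\nabla u - a\nabla(u-S_I[u])$ and applying Theorem~\ref{thm:indices_condition_for_zero_error_estimator} to the residual restricts the surviving $\bm{j}\notin I$ to those with $j_n\leq i_n$ for all $n$ and $j_n\geq i_n-1$ for some $n$; this is not a small neighborhood of $\bm{i}$. For instance, $\bm{j}=(i_1,1,\ldots,1)$ can lie outside $I$ and satisfies both conditions, yet $\snorm{m(\bm{j}-\bm{1})}_1 = m(i_1-1)$ is far smaller than $\snorm{m(\bm{i}-\bm{1})}_1$, so the corresponding term in the residual is exponentially \emph{larger} than the claimed bound. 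The alternative therefore makes the exponent mismatch worse, not better; sticking with the direct route and settling for $\sigma/2$ in the exponent is the correct fix.
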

\begin{proof}
Observe that $S_I[u]$ is analytic but not uniformly with respect to $I$, so one cannot apply directly the convergence result for the hierarchical surplus.
Recalling Remark~\ref{rk:simplified_ptwise_error_estim_operator}, we can simplify the expression of $\zeta_{\bm{i}, I}$ as 
\begin{align*}
\zeta_{\bm{i}, I} 
& = \norm{\Delta^{m(\bm{i})} \left( a\nabla S_I[u]\right)}_{L^\infty(\Gamma, L^2(D))}\\
& = \norm{\Delta^{m(\bm{i})} \left( a\nabla \sum_{\substack{{n\in 1,\ldots, N}\\{\bm{i}-\bm{e}_n\in I}}}\Delta^{m(\bm{i}-\bm{e}_n)}u\right)}_{L^\infty(\Gamma, L^2(D))}.
\end{align*}
Applying the stability of $\Delta^{m(\bm{i})}$, boundedness of $a$, and the triangle inequality, we obtain
\begin{align*}
\zeta_{\bm{i}, I} 
\lesssim \Lambda_{\bm{i}} \sum_{\substack{{n\in 1,\ldots, N}\\{\bm{i}-\bm{e}_n\in I}}}\norm{\Delta^{m(\bm{i}-\bm{e}_n)}\nabla u}_{L^\infty(\Gamma, L^2(D))}.
\end{align*}
Observe finally that, since $u$ is analytic, we can {apply Lemma \ref{lemma:estim_hierarchical_surplus}} to obtain
\begin{align*}
\zeta_{\bm{i}, I} 
& \leq \Lambda_{\bm{i}} \sum_{\substack{{n\in 1,\ldots, N}\\{\bm{i}-\bm{e}_n\in I}}}  \Lambda_{\bm{i}-\bm{e}_n} e^{-\sigma \vert m(\bm{i}-\bm{e}_n-\bm{1})\vert_1} \lesssim N \Lambda_{\bm{i}}^2 e^{-\sigma \vert m(\bm{i}-\bm{1})\vert_1}.
\end{align*}
\end{proof}
\begin{remark} \label{rk:boundedness_param_estim}
A direct consequence of the previous proposition is the uniform boundedness of the sequence of a-posteriori estimators $\left(\zeta_{SC, I_\ell}\right)_\ell$. Indeed, we have the following bound independently of of the iteration number~$\ell$
\begin{align*}
\zeta_{SC, I_\ell} 
= \sum_{\bm{i}\in\mathcal{M}_{I_\ell}} \zeta_{\bm{i}, I_\ell}
\lesssim N \sum_{\bm{i}\in\mathcal{M}_{I_\ell}} \Lambda_{\bm{i}} e^{-\sigma \vert m(\bm{i-1})\vert_1}
\leq N \sum_{\bm{i}\in\N^N} \Lambda_{\bm{i}} e^{-\sigma \vert m(\bm{i-1})\vert_1} 
< \infty .
\end{align*}
\end{remark}

{\begin{lemma}\label{lemma:bould_lambda_i_l_wrt_l}
The profit maximizer $\bm{i}_\ell\in\N^N$ at iteration $\ell$ of Algorithm~\ref{algo:Refine_param_space} satisfies 
\[\Lambda_{\bm{i}_\ell} = \prod_{n=1}^N \langle\bm{i}_\ell, \bm{e}_n \rangle \leq \left(1+\frac{\ell}{N}\right)^N. \]
Moreover, there holds
\begin{subequations}\label{eq:bound_card_A_i}
\begin{align}
\#A_{\bm{i}_\ell, I_\ell} \leq \left(1+\frac{\ell}{N}\right)^N 
 \end{align}
as well as
\begin{align}
\#\mathcal{M}_{I_\ell} \leq N \left(1 + (\ell-1) \left(1+\frac{\ell-1}{N}\right)^N \right).
\end{align}
\end{subequations}
\end{lemma}
\begin{proof}
First observe that due to the arithmetic-geometric inequality,
\[ \prod_{n=1}^N j_n \leq \left(\frac{\sum_{n=1}^N j_n}{N}\right)^N = \left(\frac{\vert\bm{j}\vert_1}{N}\right)^N \qquad \forall \bm{j}\in \R^N.\]
Then, it can be easily proved by induction that $\vert\bm{i}_\ell\vert_1 = N+\ell$.
To prove~\eqref{eq:bound_card_A_i}, first observe that $A_{\bm{i}} = \mathcal{R}_{\bm{i}}\setminus I$, where $\mathcal{R}_{\bm{i}}$ is the axis-aligned rectangle in $\N^N$ as defined in (\ref{def:rectangle}). 
Thus, $\#A_{\bm{i}_\ell, I_\ell} \leq \#\mathcal{R}_{\bm{i}_\ell, I_\ell} = \Lambda_{\bm{i}_\ell}$ and due to the previous lemma we obtained the desired bound. 
As for the second estimate, first observe that $\#\mathcal{M}_{I_\ell} \leq N \# I_\ell$. Then, an estimate on $\# I_\ell$ comes from the partition $I_\ell = \left\{\bm{1}\right\} \cup \bigcup_{m=1}^{\ell-1} A_{\bm{i}_m}$ and the estimate on $\#A_{\bm{i}_\ell}$.
\end{proof}}

\subsubsection{Remarks on the algorithm driven by workless profit} \label{sec:rks_workless_profit}
In this section, we point out some elementary facts on the behavior of the algorithm when the workless profit defined in (\ref{def:workless_profit}) is used.\\
Inspired by \cite{chkifa2014high}, we give the following definition:
\begin{definition} \label{def:maximal_points_in_margin}
Given a downward closed multi-index set $I\subset \N^N$, $\bm{i}\in \mathcal{M}_I$ is \emph{maximal in $\mathcal{M}_I$} if and only if
\[ \forall \bm{j}\in \mathcal{M}_I\setminus \left\{\bm{i}\right\}, \ \exists n\in 1,\ldots, N : i_n > j_n.\]
The \emph{set of maximal points in $\mathcal{M}_I$} is denoted by $\mu_I$.
\end{definition}
\begin{example}\label{ex:maximal_set_margin_rectangle}
If $\bm{i}\in \N^N$ and $I = \mathcal{R}_{\bm{i}}$ is an axis-aligned rectangle as defined in \eqref{def:rectangle}, then 
\[\mu_I = \left\{ \bm{i} + \bm{e}_n, n\in 1,\ldots, N\right\}.\]
\end{example}
\begin{lemma}
For the workless profit (\ref{def:workless_profit}), the selected point $\bm{i}_\ell$ is maximal in $\mathcal{M}_{I_\ell}$
\begin{align}\label{eq:maximal_mids_rect}
\bm{i}_\ell\in \mu_{I_\ell}.
\end{align}
Therefore, $I_\ell$ is an axis-aligned rectangle in $\N^N$, i.e.
\begin{align} \label{eq:I_l_is_rectangle}
I_\ell = \mathcal{R}_{\bm{i}_{\ell-1}}.
\end{align}
\end{lemma}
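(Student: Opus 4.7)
The plan is to prove the two claims simultaneously by induction on $\ell \geq 0$, relying on the elementary characterization $A_{\bm{i},I}=\{\bm{k}\leq\bm{i}:\bm{k}\notin I\}$; this holds because $I$ is downward closed (any smaller superset of $\{\bm{i}\}$ fails to close $I\cup\{\bm{i}\}$ downward), and each such $\bm{k}$ in fact lies in $\mathcal{M}_I$ by a short contradiction against downward closedness (if no predecessor of $\bm{k}$ were in $I$ along the direction $n_0$ witnessing $\bm{i}\in\mathcal{M}_I$, then $\bm{k}\leq\bm{i}-\bm{e}_{n_0}\in I$ would already put $\bm{k}$ in $I$). For the base case set $\bm{i}_{-1}\coloneqq\bm{1}$; then $I_0=\{\bm{1}\}=\mathcal{R}_{\bm{i}_{-1}}$, and $\mathcal{M}_{I_0}=\{\bm{1}+\bm{e}_n:n=1,\ldots,N\}$ is an antichain, so $\mu_{I_0}=\mathcal{M}_{I_0}$ and \eqref{eq:maximal_mids_rect} is automatic at $\ell=0$.

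For the inductive step assume $I_\ell=\mathcal{R}_{\bm{i}_{\ell-1}}$; Example~\ref{ex:maximal_set_margin_rectangle} then gives $\mu_{I_\ell}=\{\bm{i}_{\ell-1}+\bm{e}_n:n=1,\ldots,N\}$. To prove \eqref{eq:maximal_mids_rect}, pick any $\bm{k}\in\mathcal{M}_{I_\ell}\setminus\mu_{I_\ell}$, so that some $\bm{k}'\in\mathcal{M}_{I_\ell}\setminus\{\bm{k}\}$ satisfies $\bm{k}\leq\bm{k}'$ componentwise. Every predecessor of $\bm{k}$ outside $I_\ell$ is also a predecessor of $\bm{k}'$ outside $I_\ell$, and additionally $\bm{k}'\in A_{\bm{k}',I_\ell}\setminus A_{\bm{k},I_\ell}$, hence $A_{\bm{k},I_\ell}\subsetneq A_{\bm{k}',I_\ell}$. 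Since $\zeta_{\bm{j},I_\ell}\geq 0$, the workless profit \eqref{def:workless_profit} obeys $\mathcal{P}_{\bm{k},I_\ell}\leq\mathcal{P}_{\bm{k}',I_\ell}$, so any profit maximizer may be chosen inside $\mu_{I_\ell}$, which yields \eqref{eq:maximal_mids_rect}. Writing $\bm{i}_\ell=\bm{i}_{\ell-1}+\bm{e}_{n_0}$ for some $n_0\in\{1,\ldots,N\}$, a direct calculation shows that the predecessors of $\bm{i}_\ell$ missing from $\mathcal{R}_{\bm{i}_{\ell-1}}$ form the ``slab'' $\{\bm{k}:k_{n_0}=i_{\ell-1,n_0}+1,\ k_n\leq i_{\ell-1,n}\ \forall n\neq n_0\}$, and this slab equals $A_{\bm{i}_\ell,I_\ell}$. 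Adjoining it to $\mathcal{R}_{\bm{i}_{\ell-1}}$ reproduces the rectangle $\mathcal{R}_{\bm{i}_\ell}$, so $I_{\ell+1}=I_\ell\cup A_{\bm{i}_\ell,I_\ell}=\mathcal{R}_{\bm{i}_\ell}$, closing the induction.

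The main subtlety is the tie-breaking: the comparison $\mathcal{P}_{\bm{k},I_\ell}\leq\mathcal{P}_{\bm{k}',I_\ell}$ is only weak, and the lexicographic rule could a priori select a non-maximal $\bm{k}$ if all surpluses $\zeta_{\bm{j},I_\ell}$ with $\bm{j}\in A_{\bm{k}',I_\ell}\setminus A_{\bm{k},I_\ell}$ happen to vanish. Under the generic assumption that the pointwise estimators are strictly positive the inclusion becomes a strict profit inequality and the claim is immediate; otherwise one should interpret the tie-breaking rule as preferring a dominating index in $\mu_{I_\ell}$, a refinement that does not affect convergence rates and restores \eqref{eq:maximal_mids_rect}--\eqref{eq:I_l_is_rectangle} as stated.
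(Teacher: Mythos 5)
Your argument is essentially the paper's: maximality follows from the profit comparison induced by $A_{\bm{k},I_\ell}\subsetneq A_{\bm{k}',I_\ell}$ whenever $\bm{k}$ is componentwise dominated by some other $\bm{k}'\in\mathcal{M}_{I_\ell}$, and the rectangle identity is then obtained by induction using Example~\ref{ex:maximal_set_margin_rectangle}; the only structural difference is that you run both claims through a single induction, while the paper proves maximality directly and then inducts.

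That said, you have put your finger on a genuine (if benign) loose end in the paper's argument. The paper asserts the strict inequality $\mathcal{P}_{\bm{i}_\ell,I_\ell}<\mathcal{P}_{\bm{j},I_\ell}$, but non-negativity of the $\zeta_{\bm{j},I_\ell}$ only yields the weak comparison you write. Worse, if $\bm{i}_\ell$ is componentwise dominated by $\bm{j}$ then $\bm{i}_\ell$ comes \emph{earlier} in lexicographic order, so the tie-breaking rule stated after~\eqref{profit_maximizer} would select the non-maximal index in case of an exact tie, invalidating~\eqref{eq:maximal_mids_rect} and the rectangle structure on which Section~\ref{sec:rks_workless_profit} and Theorem~\ref{th:rate_conv_workless_profit} are built. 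Your two remedies---assuming strict positivity of the margin estimators, or redefining the tie-break to prefer a componentwise-dominating index---both close this gap and should be recorded as a standing convention when the workless profit is used.
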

\begin{proof}
We prove \eqref{eq:maximal_mids_rect} by contradiction. If $\bm{i}_\ell$ is not maximal, there exists $\bm{j}\in\mathcal{M}_{I_\ell}\setminus\left\{\bm{i}_\ell \right\}$ such that for all $n\in 1,\ldots, N$ $\langle \bm{i}_\ell, \bm{e}_n \rangle \leq j_n$, which implies $\bm{i}_\ell\in \mathcal{R}_{\bm{j}}$. 
Thus, $\bm{i}_\ell\in A_{\bm{j}, I_\ell} = \mathcal{R}_{\bm{j}}\setminus I_\ell$ and by definition of the workless profit, we have the contradiction $\mathcal{P}_{\bm{i}_\ell, I_\ell} < \mathcal{P}_{\bm{j}, I_\ell}$.

The second fact \eqref{eq:I_l_is_rectangle} can be proved by induction. For $\ell=1$, $I_1 = \mathcal{R}_{\bm{1}} = \left\{\bm{1}\right\}$.
Assume that for fixed $\ell\in\N$, $I_\ell = \mathcal{R}_{\bm{i}_{\ell-1}}$.
With~\eqref{eq:maximal_mids_rect} and Example~\ref{ex:maximal_set_margin_rectangle}, we know
\[\bm{i}_\ell\in \mu_{I_\ell} = \mu_{\mathcal{R}_{\bm{i}_{\ell-1}}} = \left\{ \bm{i}_{\ell-1} + \bm{e}_n, n\in 1,\ldots, N\right\}.\]
Thus $I_{\ell+1} = I_{\ell } \cup A_{\bm{i}_\ell, I_\ell} = \mathcal{R}_{\bm{i}_\ell}.$

\end{proof}
To summarize, the use of the workless profit (\ref{def:workless_profit}) implies that, for all $\ell>0$, 
\begin{itemize}
\item it exists \emph{a unique} number $n(\ell)\in 1,\ldots, N$ such that
\begin{equation}\label{i_l+1_at_distance_one_from_i_l}
\bm{i}_{\ell+1} = \bm{i}_\ell + \bm{e}_{n(\ell)}.
\end{equation}
\item as a consequence, the norm of $\bm{i}_\ell$ is given by:
\begin{equation}\label{norm_i_l}
\vert\bm{i}_{\ell+1}\vert_1 = \vert \bm{i}_\ell\vert_1 + 1 = N+\ell. 
\end{equation}
\item $I_\ell$ is a rectangle:
\begin{equation}\label{I_l_is_rectangle}
I_{\ell+1} = \mathcal{R}_{\bm{i}_{\ell }}.
\end{equation}
Therefore, the sparse grid stochastic collocation interpolant is actually a full tensor product Lagrange interpolant:
\[ S_{I_{\ell+1}} = \bigotimes_{n=1}^N \mathcal{U}_n^{m(\langle \bm{i}_\ell, \bm{e}_n\rangle)}.\]
\item the multi-indices added at iteration $\ell$ are
\begin{equation}\label{added_points_simpl_profit}
A_{i_\ell, I_\ell} = I_{\ell+1} \setminus I_\ell = \left\{\bm{j}\in \mathcal{R}_{\bm{i}_\ell} : j_{n(\ell)} =  \langle \bm{i}_\ell, \bm{e}_{n(\ell)} \rangle\right\}.
\end{equation}
\end{itemize}
In other words, the evolution of the approximation space is determined by the sequence of integers $\left(n(\ell)\right)_{\ell }$.
This allows us to simplify the notation as follows
\begin{align*}
A_{n, I_\ell} \coloneqq A_{\bm{i}_{\ell-1} + \bm{e}_n, I_\ell}\\
\mathcal{P}_{n, I_\ell} \coloneqq \sum_{\bm{j}\in A_{n, I_\ell}} \zeta_{\bm{j}, I_\ell}
\end{align*}
Let us moreover denote the maximal $n$-th dimension of $I_\ell$ as
\begin{equation}\label{def:rectangle_width}
r_{n,\ell} \coloneqq \max_{\bm{j}\in I_\ell} j_n.
\end{equation} 
See Figure \ref{fig:simplified_profit} for a graphical representation.
\begin{figure}
\centering
\begin{tikzpicture}
\coordinate (y) at (0,5);
\coordinate (x) at (4,0);
\draw[axis] (y) -- (0,0) --  (x);
\foreach \x in {1,2,3} {%
    \draw ($(\x,0) + (0,-2pt)$) -- ($(\x,0) + (0,2pt)$)
        node [below] {$\x$};
}
\foreach \y in {1,2,3,4} {%
    \draw ($(0,\y) + (-2pt,0)$) -- ($(0,\y) + (2pt,0)$)
        node [left] {$\y$};
}
\node[] at (4, -0.3) {$j_1$};
\node[] at (-0.3, 5) {$j_2$};
\draw [fill] (1, 1) circle (0.1);
\draw [fill] (1, 2) circle (0.1);
\draw [fill] (1, 3) circle (0.1);
\draw [fill] (2, 1) circle (0.1);
\draw [fill] (2, 2) circle (0.1);
\draw [fill] (2, 3) circle (0.1);
\draw [ultra thick] (3, 1) circle (0.1);
\draw [ultra thick] (3, 2) circle (0.1);
\draw [red, ultra thick] (3, 3) circle (0.1);
\draw [ultra thick] (1, 4) circle (0.1);
\draw [ultra thick] (2, 4) circle (0.1);
\draw[draw=blue] (2.8,0.6) rectangle ++(0.7, 2.7);
\node[below right] at (2, 3) {$\bm{i}_{\ell-1}$};
\node[below right] at (3, 3) {{\color{red} $\bm{i}_{\ell }$}};
\node[] at (4.0, 0.5) {{\color{blue} $A_{1, I_\ell}$}};
\end{tikzpicture}
\caption{Example of approximation parameters at a generic step $\ell$ of the algorithm when the workless profit (\ref{def:workless_profit}) is used. 
{We consider $N=2$ parameters, each point in the plot corresponds to an element $\bm{j}=(j_1,j_2)\in \N^2$, where on the x-axis we represent $j_1$ and on the y-axis $j_2$.}
Filled dots represent $I_\ell$, hollow ones $\mathcal{M}_{I_\ell}$. 
The multi-index selected by the algorithm at current step, $\bm{i}_\ell$, is in red (so in this case $n(\ell)=1$).
The blue rectangle encircles multi-indices in $A_{n(\ell), I_\ell}$.}
\label{fig:simplified_profit}
\end{figure}

The estimate for the pointwise error estimator from Proposition \ref{prop:estim_ptwise_estimator} can be improved as follows.
First observe that, due to (\ref{i_l+1_at_distance_one_from_i_l}) and (\ref{I_l_is_rectangle}), 
\begin{equation*}
J_{\bm{i}, I_\ell} 
{=} \left\{ \bm{j}\in I_\ell : \exists n\in 1,\ldots, N\ {:}\ \bm{j} = \bm{i}-\bm{e}_n\right\}
= \left\{ \bm{i} - \bm{e}_{n(\ell)}\right\}.
\end{equation*}

Thus, $\# J_{\bm{i}, I_\ell} = 1$ and we may reduce the dependence on $N$ by
\begin{equation}\label{estim_zeta_i_workless_profit}
\zeta_{\bm{i}, I_\ell} 
\lesssim \Lambda_{\bm{i}}^2 e^{-\sigma\vert m(\bm{i}-\bm{1})\vert}. 
\end{equation}

\subsection{Convergence of the parametric estimator}
In the following two lemmata, we prove that Algorithm~\ref{algo:SC_adaptive_algo} driven by workless profit and profit with work respectively forces the maximum profit over the margin  to zero.
\begin{proposition} \label{prop:limit_profit_without_work}
If the workless profit (\ref{def:workless_profit}) is used, then
\[\lim_{\ell \rightarrow\infty} \mathcal{P}_{n(\ell), I_\ell} = 0.\]
\end{proposition}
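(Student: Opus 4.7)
The plan is to establish the stronger statement that $\sum_{\ell} \mathcal{P}_{n(\ell), I_\ell} < \infty$, from which $\mathcal{P}_{n(\ell), I_\ell} \to 0$ follows immediately as the general term of a convergent series.

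The first key observation is that the family of added sets $\bigl(A_{n(\ell), I_\ell}\bigr)_{\ell \geq 1}$ is pairwise disjoint. Indeed, by~\eqref{added_points_simpl_profit} we have $A_{n(\ell), I_\ell} = I_{\ell+1} \setminus I_\ell$; once a multi-index $\bm{j}$ enters the downward-closed set $I_{\ell+1}$, it stays there forever, hence it no longer belongs to $\mathcal{M}_{I_{\ell'}}$ for any $\ell' > \ell$. So each $\bm{j} \in \N^N$ appears in at most one $A_{n(\ell), I_\ell}$; denote that iteration (when it exists) by $\ell(\bm{j})$.

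Using this disjointness and the uniform-in-$I_\ell$ pointwise bound~\eqref{estim_zeta_i_workless_profit}, I would swap the order of summation and estimate
\begin{equation*}
\sum_{\ell \geq 1} \mathcal{P}_{n(\ell), I_\ell}
= \sum_{\ell \geq 1} \sum_{\bm{j} \in A_{n(\ell), I_\ell}} \zeta_{\bm{j}, I_\ell}
\leq \sum_{\bm{j} \in \N^N} \zeta_{\bm{j}, I_{\ell(\bm{j})}}
\lesssim \sum_{\bm{j} \in \N^N} \Lambda_{\bm{j}}^2 \, e^{-\sigma \vert m(\bm{j}-\bm{1})\vert_1}.
\end{equation*}
By the tensor-product structure of $\Lambda_{\bm{j}}$ and $\vert m(\bm{j}-\bm{1})\vert_1$, the right-hand side factorizes into $\prod_{n=1}^N \sum_{j \geq 1} j^2 e^{-\sigma m(j-1)}$, and each one-dimensional series converges because the doubling rule~\eqref{doubling_rule} gives $m(j-1) \sim 2^{j-2}$, so the exponential decay dwarfs the polynomial factor. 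This yields $\sum_{\ell} \mathcal{P}_{n(\ell), I_\ell} < \infty$ and hence the claim.

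The only real substance in this argument is recognizing that the workless-profit case allows us to identify $A_{n(\ell), I_\ell}$ as a telescoping increment of $I_\ell$, which then makes the pointwise sum finite via the a~priori decay estimate~\eqref{estim_zeta_i_workless_profit}. Everything else is routine bookkeeping.
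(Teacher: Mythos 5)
Your proof is correct, and it takes a genuinely different route from the paper's. The paper fixes the chosen direction $n(\ell)$, peels off the factor $i_{n(\ell)}^2 e^{-\frac{\sigma}{2} m(i_{n(\ell)}-1)}$ which is forced to be small because $i_{n(\ell)} = r_{n(\ell),\ell}+1$ with $r_{n(\ell),\ell}$ the current width of the rectangle $I_\ell$ in that direction, bounds the remaining sum uniformly over $\ell$ by a convergent series, and concludes with the (stated but unproved) observation that $r_{n(\ell),\ell}\to\infty$. This yields the explicit bound $\mathcal{P}_{n(\ell), I_\ell} \lesssim r_{n(\ell),\ell}^2 e^{-\frac{\sigma}{2} m(r_{n(\ell),\ell})}$, which is not discarded: it is reused verbatim in the proof of Theorem~\ref{th:rate_conv_workless_profit} to obtain a convergence rate in~$\ell$. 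Your approach instead exploits that the increments $A_{n(\ell), I_\ell} = I_{\ell+1}\setminus I_\ell$ are pairwise disjoint, so summing the uniform pointwise bound~\eqref{estim_zeta_i_workless_profit} over all of $\N^N$ dominates the whole series $\sum_\ell \mathcal{P}_{n(\ell), I_\ell}$. This is cleaner and, in one respect, stronger: you obtain absolute summability of the profits, and you sidestep the mildly delicate claim that $r_{n(\ell),\ell}\to\infty$ (which needs a short pigeonhole argument the paper glosses over). Moreover your argument does not really use the rectangle structure of $I_\ell$ at all, only the disjointness of the increments and the a priori decay of $\zeta_{\bm{j},I}$, so it would apply equally with the general bound from Proposition~\ref{prop:estim_ptwise_estimator}. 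What you give up is the quantitative decay in terms of $r_{n(\ell),\ell}$ that the paper needs downstream for the rate theorem; if you were to continue to Theorem~\ref{th:rate_conv_workless_profit}, you would still have to rederive essentially the paper's peeling estimate.
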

\begin{proof}
For fixed $n\in 1,\ldots, N$, we estimate each pointwise error estimator appearing in $\mathcal{P}_{n, I_\ell}$ by (\ref{estim_zeta_i_workless_profit}) and the fact that for any $\bm{i}$ in $A_{n, I_\ell}$, $i_n = r_{n,\ell}+1$.
\begin{align*}
\mathcal{P}_{n, I_\ell} 
& = \sum_{\bm{j}\in A_{n, I_\ell}} \zeta_{\bm{j}, I_\ell}
  \lesssim  \sum_{\bm{i}\in A_{n, I_\ell}} \Lambda_{\bm{i}}^2 e^{-\sigma \vert m(\bm{i-1})\vert_1}\\
& =   \sum_{\bm{i}\in A_{n, I_\ell}} \prod_{k=1}^N \left(i_k^2 e^{-\sigma \vert m(i_k-1)\vert}\right)\\
& \leq  \left(r_{n,\ell}+1\right)^2 e^{-\frac{\sigma}{2} m(r_{n,\ell})} 
\sum_{\bm{i}\in A_{n, I_\ell}} \left(i_n^2 e^{-\frac{\sigma}{2} m(i_n+1)} \prod_{k=1, k\neq n}^N \left(i_k^2 e^{-\sigma m(i_k-1)}\right) \right)\\
& \leq  \left(r_{n,\ell}+1\right)^2 e^{-\frac{\sigma}{2} m(r_{n,\ell})} 
\sum_{\bm{i}\in A_{n, I_\ell}} \Lambda_{\bm{i}}^2 e^{-\frac{\sigma}{2} \vert m(\bm{i-1})\vert_1}.
\end{align*}
The last factor is uniformly bounded with respect to $\ell$ (but this bound depends on the number of dimensions $N$)
\[\sum_{\bm{i}\in A_{n, I_\ell}} \Lambda_{\bm{i}}^2 e^{-\frac{\sigma}{2} \vert m(\bm{i-1})\vert_1} 
\leq \sum_{\bm{i}\in \N^N} \Lambda_{\bm{i}}^2 e^{-\frac{\sigma}{2} \vert m(\bm{i-1})\vert_1} < \infty.
\]
We are left with:
\[\mathcal{P}_{n, I_\ell}  \lesssim \left(r_{n,\ell}+1\right)^2 e^{-\frac{\sigma}{2} m(r_{n,\ell})}.\]
The proof is completed by observing that $\lim_{\ell \rightarrow \infty} r_{n(\ell), l} = \infty.$
\end{proof}

For the profit with work, we can even show convergence to zero of the profit without using the analyticity assumption on $u$. This is not relevant for the problem at hand, as the analyticity follows immediately, but may be relevant for more complicated and less regular random coefficients.
\begin{proposition}\label{prop:limit_profit_with_work}
There holds
$
\lim_{\ell \rightarrow \infty} \mathcal{P}_{\bm{i}_\ell, I_\ell} = 0
$.
\end{proposition}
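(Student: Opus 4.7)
The plan is to bound the numerator of $\mathcal{P}_{\bm{i}_\ell, I_\ell}$ polynomially in the entries of $\bm{i}_\ell$ while the work $W_{\bm{i}_\ell}$ in the denominator grows exponentially, so that the ratio is forced to $0$. As the paragraph preceding the statement suggests, the estimate of the numerator only uses $u\in L^\infty(\Gamma,V)$ and not the full analyticity of $u$.

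First I would observe that the sequence $(\bm{i}_\ell)_\ell$ is pairwise distinct in $\N^N$: indeed $\bm{i}_\ell \in \mathcal{M}_{I_\ell}\setminus I_\ell$ while $\bm{i}_\ell \in A_{\bm{i}_\ell, I_\ell}\subseteq I_{\ell+1}$, so the same multi-index cannot be selected twice. Since the box $\{1,\dots,M\}^N$ is finite for every $M\in\N$, this forces $|\bm{i}_\ell|_\infty\to\infty$.

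For the numerator I would use Remark~\ref{rk:simplified_ptwise_error_estim_operator} to write $\Delta^{m(\bm{j})}(a\nabla S_{I_\ell}[u]) = \Delta^{m(\bm{j})}\big(a\nabla\sum_{\bm{k}\in J_{\bm{j},I_\ell}}\Delta^{m(\bm{k})}u\big)$ with $\#J_{\bm{j},I_\ell}\leq N$. Applying stability~(\ref{stab_hierarchical_surplus}) to $\Delta^{m(\bm{j})}$, the uniform bound on $a$, a triangle inequality, and then (\ref{stab_hierarchical_surplus}) to each $\Delta^{m(\bm{k})}$ acting on the fixed function $\nabla u\in L^\infty(\Gamma,L^2(D))$ yields $\zeta_{\bm{j},I_\ell}\lesssim N\,\Lambda_{\bm{j}}^2\,\norm{u}_{L^\infty(\Gamma,V)}$, uniformly in $\ell$. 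Summing over $\bm{j}\in A_{\bm{i}_\ell,I_\ell}\subseteq\mathcal{R}_{\bm{i}_\ell}$ and using $\Lambda_{\bm{j}}\leq\Lambda_{\bm{i}_\ell}$ together with $\#A_{\bm{i}_\ell,I_\ell}\leq\Lambda_{\bm{i}_\ell}$ gives $\sum_{\bm{j}\in A_{\bm{i}_\ell,I_\ell}}\zeta_{\bm{j},I_\ell}\lesssim \Lambda_{\bm{i}_\ell}^3$. Since also $\sum_{\bm{j}\in A_{\bm{i}_\ell,I_\ell}}W_{\bm{j}}\geq W_{\bm{i}_\ell}$, the bound on the ratio factors coordinate-wise as
\[
\frac{\Lambda_{\bm{i}_\ell}^3}{W_{\bm{i}_\ell}} \;=\; \prod_{n=1}^N \frac{i_{\ell,n}^3}{m(i_{\ell,n})-m(i_{\ell,n}-1)}.
\]
Each factor is uniformly bounded on $\N$ (since $k^3/2^{k-2}\to 0$ as $k\to\infty$, the supremum is finite), and the factor corresponding to the coordinate realising $|\bm{i}_\ell|_\infty$ tends to $0$. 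Therefore $\mathcal{P}_{\bm{i}_\ell,I_\ell}\lesssim\Lambda_{\bm{i}_\ell}^3/W_{\bm{i}_\ell}\to 0$.

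The main obstacle will be the numerator estimate, more precisely keeping it independent of the Lebesgue constant of $S_{I_\ell}$. A direct stability argument would give $\zeta_{\bm{j},I_\ell}\leq \Lambda_{\bm{j}}\norm{a\nabla S_{I_\ell}[u]}_{L^\infty(\Gamma,V)}$, but $\norm{S_{I_\ell}[u]}_{L^\infty(\Gamma,V)}$ generally grows with $\ell$. The workaround is to first collapse $S_{I_\ell}[u]$ to a sum of at most $N$ surpluses $\Delta^{m(\bm{j}-\bm{e}_n)}u$ via Remark~\ref{rk:simplified_ptwise_error_estim_operator}, so that stability only brings in the $\ell$-independent norm $\norm{u}_{L^\infty(\Gamma,V)}$.
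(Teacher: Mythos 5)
Your proof is correct and follows the same strategy as the paper: bound the numerator $\sum_{\bm{j}\in A_{\bm{i}_\ell,I_\ell}}\zeta_{\bm{j},I_\ell}$ polynomially in $\bm{i}_\ell$ using the stability bound~\eqref{stab_hierarchical_surplus} (via Remark~\ref{rk:simplified_ptwise_error_estim_operator}, without analyticity), bound the denominator from below by the single term $W_{\bm{i}_\ell}$, which is exponential in $\bm{i}_\ell$, and let the exponential win. The only genuine difference is how divergence of the selected indices is established. The paper's proof concludes from ``$2^{\vert\bm{i}_\ell\vert_1}$ grows much faster than $\Lambda_{\bm{i}_\ell}^2$'', which tacitly invokes the identity $\vert\bm{i}_\ell\vert_1=N+\ell$ asserted in the proof of Lemma~\ref{lemma:bould_lambda_i_l_wrt_l}; that identity holds for the workless profit but is not obvious for the profit with work (the profit maximizer need not be a maximal element of the margin, so $\vert\bm{i}_\ell\vert_1$ need not increase by exactly one each step). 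Your replacement argument, that the $\bm{i}_\ell$ are pairwise distinct because $\bm{i}_\ell\in A_{\bm{i}_\ell,I_\ell}\subseteq I_{\ell+1}$ while $\bm{i}_m\in\mathcal{M}_{I_m}$ is disjoint from $I_m$ for $m>\ell$, and therefore $\vert\bm{i}_\ell\vert_\infty\to\infty$ because each box $\{1,\dots,M\}^N$ is finite, is both elementary and robust, and sidesteps this issue cleanly. The final coordinate-wise factorization $\Lambda_{\bm{i}_\ell}^3/W_{\bm{i}_\ell}=\prod_n i_{\ell,n}^3/\bigl(m(i_{\ell,n})-m(i_{\ell,n}-1)\bigr)$ with one factor tending to zero and the rest uniformly bounded is a tidy way to close; it is equivalent to the paper's $\ell_1$-based estimate but does not rely on the precise value of $\vert\bm{i}_\ell\vert_1$.
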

\begin{proof}
As in the proof of Proposition~\ref{prop:estim_ptwise_estimator}, but without using any analyticity of $u$, we obtain with~\eqref{stab_hierarchical_surplus} that
\begin{align*}
 \zeta_{\bm{i},I}\lesssim \Lambda_{\bm{i}}^2N 
 \norm{\nabla u}_{L^\infty(\Gamma,L^2(D))}.
\end{align*}
We observe that the doubling rule (\ref{doubling_rule}) implies
\begin{equation} \label{bounds_work_i}
2^{\vert\bm{i}\vert_1 - 2N} \leq W_{\bm{i}} \leq 2^{\vert\bm{i}\vert_1 -N}.
\end{equation}
Thus, the profit can be estimated as:
\begin{align*}
\mathcal{P}_{\bm{i}_\ell, I_\ell} 
\lesssim \frac{\sum_{\bm{j}\in A_{\bm{i}_\ell, I_\ell}}\zeta_{\bm{{j}},I_\ell}}{\sum_{\bm{j}\in A_{\bm{i}_\ell, I_\ell}} W_{\bm{j}}} 
\lesssim \frac{\# A_{\bm{i}_\ell,I_\ell} \Lambda_{\bm{i}_\ell}^2 N}{W_{\bm{i}_\ell}}
\leq N(1+\ell/N)^N \Lambda_{\bm{i}_\ell}^22^{2N-\vert \bm{i}_\ell\vert_1}.
\end{align*}
Since $2^{|\bm{i}_\ell|_1}$ grows much faster than $\Lambda_{\bm{i}_\ell}^2=\prod_{n=1}^N i_{\ell,n}^2$, we conclude the proof.
\end{proof}

The following result shows that, if a multi-index $\bm{i}\in \N^N$ stays in the margin indefinitely, then it's pointwise estimator vanishes. This result is valid for both workless profit and profit with work.
\begin{proposition} \label{prop:multi_index_in_margin_has_0_ptwise_estim}
Let $\hat{\bm{i}}\in \N^N$ and suppose the index remains in the margin indefinitely, i.e.,
\begin{align*}
\exists \ell_0 \in \N : \forall \ell\geq \ell_0,\ \hat{\bm{i}}\in \mathcal{M}_{I_{\ell }}.
\end{align*} 
Then, the pointwise error estimator corresponding to $\hat{\bm{i}}$ vanishes
\begin{align*}
\lim_{\ell \rightarrow \infty} \zeta_{\hat{\bm{i}}, I_\ell} = 0.
\end{align*}
\end{proposition}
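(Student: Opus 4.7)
The plan is to leverage Propositions~\ref{prop:limit_profit_without_work} and~\ref{prop:limit_profit_with_work} (vanishing profit at the selected maximizer) together with the max-selection property of Line~\ref{algo_param_enrich:line:prof_max} of Algorithm~\ref{algo:Refine_param_space}. The key point is that if $\hat{\bm{i}}$ remains in the margin for all $\ell \geq \ell_0$, it is always a candidate for the argmax, so its profit is dominated by the profit of the actual maximizer $\bm{i}_\ell$, which we already know tends to zero.

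\textbf{Step 1.} For all $\ell \geq \ell_0$, use the definition of $\bm{i}_\ell$ as profit-maximizer over $\mathcal{M}_{I_\ell}$ to write
\begin{equation*}
\mathcal{P}_{\hat{\bm{i}}, I_\ell} \leq \mathcal{P}_{\bm{i}_\ell, I_\ell},
\end{equation*}
which, combined with Proposition~\ref{prop:limit_profit_without_work} in the workless case and Proposition~\ref{prop:limit_profit_with_work} in the profit-with-work case, gives $\mathcal{P}_{\hat{\bm{i}}, I_\ell} \to 0$ as $\ell \to \infty$.

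\textbf{Step 2.} Bound $\zeta_{\hat{\bm{i}}, I_\ell}$ by the profit $\mathcal{P}_{\hat{\bm{i}}, I_\ell}$. Since $\hat{\bm{i}} \in A_{\hat{\bm{i}}, I_\ell}$ by construction, in the workless case $\zeta_{\hat{\bm{i}}, I_\ell} \leq \sum_{\bm{j} \in A_{\hat{\bm{i}}, I_\ell}} \zeta_{\bm{j}, I_\ell} = \mathcal{P}_{\hat{\bm{i}}, I_\ell}$, and we are done. In the profit-with-work case, write
\begin{equation*}
\zeta_{\hat{\bm{i}}, I_\ell} \leq \sum_{\bm{j} \in A_{\hat{\bm{i}}, I_\ell}} \zeta_{\bm{j}, I_\ell} = \mathcal{P}_{\hat{\bm{i}}, I_\ell} \sum_{\bm{j} \in A_{\hat{\bm{i}}, I_\ell}} W_{\bm{j}},
\end{equation*}
so it remains to show that $\sum_{\bm{j} \in A_{\hat{\bm{i}}, I_\ell}} W_{\bm{j}}$ is bounded uniformly in $\ell$ by a constant depending only on $\hat{\bm{i}}$.

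\textbf{Step 3.} Observe that $A_{\hat{\bm{i}}, I_\ell} \subset \mathcal{R}_{\hat{\bm{i}}}$ (the downward closure of $\{\hat{\bm{i}}\}$), hence $\#A_{\hat{\bm{i}}, I_\ell} \leq \Lambda_{\hat{\bm{i}}}$. Moreover, since the doubling rule~\eqref{doubling_rule} makes $m(i)-m(i-1)$ monotonically nondecreasing in $i$, every $\bm{j} \leq \hat{\bm{i}}$ satisfies $W_{\bm{j}} \leq W_{\hat{\bm{i}}}$. Therefore $\sum_{\bm{j} \in A_{\hat{\bm{i}}, I_\ell}} W_{\bm{j}} \leq \Lambda_{\hat{\bm{i}}} W_{\hat{\bm{i}}}$, a constant independent of $\ell$, and we conclude $\zeta_{\hat{\bm{i}}, I_\ell} \to 0$ as well.

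The potential obstacle is the profit-with-work case, where the denominator $\sum W_{\bm{j}}$ in the profit could in principle grow with $\ell$ and spoil the bound. The resolution is the containment $A_{\hat{\bm{i}}, I_\ell} \subset \mathcal{R}_{\hat{\bm{i}}}$: since $\hat{\bm{i}}$ is fixed, the set over which the sum is taken lives inside a fixed finite rectangle, so its contribution is uniformly bounded; the only $\ell$-dependence comes through the vanishing factor $\mathcal{P}_{\hat{\bm{i}}, I_\ell}$.
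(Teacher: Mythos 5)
Your proof is correct and follows essentially the same route as the paper's: dominate $\mathcal{P}_{\hat{\bm{i}}, I_\ell}$ by the maximizer's profit $\mathcal{P}_{\bm{i}_\ell, I_\ell}$, invoke Propositions~\ref{prop:limit_profit_without_work} and~\ref{prop:limit_profit_with_work} to conclude it vanishes, then recover $\zeta_{\hat{\bm{i}}, I_\ell}\to 0$ by controlling the work denominator in the profit-with-work case. The only cosmetic difference is in Step~3: the paper notes that $A_{\hat{\bm{i}}, I_\ell}=\mathcal{R}_{\hat{\bm{i}}}\setminus I_\ell$ is a decreasing sequence of nonempty finite sets and hence the denominator $\sum_{\bm{j}\in A_{\hat{\bm{i}}, I_\ell}} W_{\bm{j}}$ is eventually constant, whereas you provide a clean explicit uniform bound $\Lambda_{\hat{\bm{i}}} W_{\hat{\bm{i}}}$ using monotonicity of the increments $m(i)-m(i-1)$; both observations close the same gap.
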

\begin{proof}
Let $\hat{\bm{i}}\in \N^N$ such that $\hat{\bm{i}}\in \mathcal{M}_{I_\ell}$ for all $\ell>\ell_0$. Thus, $\hat{\bm{i}}\neq \bm{i}_\ell$ for any $\ell > \ell_0$, which means that
\begin{equation*}
 \mathcal{P}_{\hat{\bm{i}}, I_\ell} \leq \mathcal{P}_{\bm{i}_\ell, I_\ell}\qquad \forall \ell>\ell_0.
\end{equation*}
In case the profit with work (\ref{def:profit_work}) is used, since $\lim_{\ell \rightarrow \infty} \mathcal{P}_{\bm{i}_\ell, I_\ell} = 0$ as proved in Proposition \ref{prop:limit_profit_with_work}, we have that $\lim_{\ell \rightarrow \infty} \mathcal{P}_{\hat{\bm{i}}, I_\ell} = 0$ (otherwise $\widehat{\bm{i}}$ would be selected at some point).
Moreover, since $\sum_{\bm{j}\in A_{\hat{\bm{i}}, I_\ell}} W_{\bm{j}}$ (i.e. the denominator in the profit $\mathcal{P}_{\hat{\bm{i}}, I_\ell}$) is eventually constant with respect to $\ell$, we have that $\lim_{\ell \rightarrow \infty} \sum_{\bm{j}\in A_{\hat{\bm{i}}, I_\ell}} \zeta_{\hat{\bm{i}}, I_\ell} = 0$, and in particular we obtain the statement.
The same holds if the profit without work (\ref{def:workless_profit}) is employed, as in Proposition \ref{prop:limit_profit_without_work} we have proved that also in this case $\lim_{\ell \rightarrow \infty} \mathcal{P}_{\bm{i}_\ell, I_\ell} = 0$.
\end{proof}

\begin{remark}
Recall the simplified formula~\eqref{formula:simplified_ptwise_error_estim_operator} for $\zeta_{\hat{\bm{i}}, I_\ell} $ 
with $J_{\hat{\bm{i}}, I_\ell} \coloneqq \left\{ \hat{\bm{i}} - \bm{e}_n \ : n\in 1,\ldots ,N \right\}$.
Observe that $\left(J_{\hat{\bm{i}}, I_\ell}\right)_{\ell }$ is eventually constant, i.e. it exists $\ell_2 > \ell_0$ {(as defined in the previous proposition)} such that for all $\ell>\ell_2$ $J_{\hat{\bm{i}}, I_\ell} = J_{\hat{\bm{i}}, I_{\ell _2}}$. 
Thus, $\left( \zeta_{\hat{\bm{i}}, I_\ell} \right)_\ell$ is also eventually constant.
Therefore, $\left(\zeta_{\hat{\bm{i}}, I_\ell}\right)_\ell$ does not only vanish in the limit, but is actually eventually zero:
\begin{align*}
\forall \ell>\ell_2, \zeta_{\hat{\bm{i}}, I_\ell} = 0.
\end{align*}
\end{remark}

We can finally prove the convergence of the parameter-enrichment algorithm with a technique inspired by \cite[Proposition 10]{bespalov2019convergence}.
\begin{theorem}[Convergence of the parameter-enrichment algorithm]\label{th:convergence_param_only_algo}
The adaptive stochastic collocation Algorithm \ref{algo:SC_adaptive_algo} driven by either workless profit or profit with work, leads to a vanishing sequence of a-posteriori error estimators, thus also leading to a convergent sequence of discrete solutions
\[\lim_{\ell \rightarrow \infty} \zeta_{SC,I_ l} = 0 = \lim_{\ell \rightarrow \infty} \norm{u-S_{I_\ell}[u]}_{L^{\infty}(\Gamma, V)}\]
\end{theorem}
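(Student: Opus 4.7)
The plan is to bound $\zeta_{SC,I_\ell}=\sum_{\bm{i}\in\mathcal{M}_{I_\ell}}\zeta_{\bm{i},I_\ell}$ by splitting the margin into a finite "low--degree" part and an "infinite tail" part, using Proposition~\ref{prop:estim_ptwise_estimator} to control the tail uniformly in~$\ell$ and Proposition~\ref{prop:multi_index_in_margin_has_0_ptwise_estim} to handle the finitely many remaining indices. Once $\lim_\ell \zeta_{SC,I_\ell}=0$ is established, the convergence $\norm{u-S_{I_\ell}[u]}_{L^\infty(\Gamma,V)}\to 0$ follows immediately from reliability.

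Fix $\varepsilon>0$. By Proposition~\ref{prop:estim_ptwise_estimator}, for every $\bm{i}\in\mathcal{M}_{I_\ell}$ we have $\zeta_{\bm{i},I_\ell}\lesssim N\Lambda_{\bm{i}}^2 e^{-\sigma|m(\bm{i}-\bm{1})|_1}$, independently of~$\ell$. Since $m(i)$ grows doubly exponentially (recall the doubling rule~\eqref{doubling_rule}), the series $\sum_{\bm{i}\in\N^N}\Lambda_{\bm{i}}^2 e^{-\sigma|m(\bm{i}-\bm{1})|_1}$ converges (this is the bound already used in Remark~\ref{rk:boundedness_param_estim}). Hence I can choose $L\in\N$ so large that
\begin{equation*}
\sum_{\bm{i}\in\N^N,\ |\bm{i}|_1>L} N\Lambda_{\bm{i}}^2 e^{-\sigma|m(\bm{i}-\bm{1})|_1} < \varepsilon/2.
\end{equation*}
This controls the tail contribution $\sum_{\bm{i}\in\mathcal{M}_{I_\ell},\,|\bm{i}|_1>L}\zeta_{\bm{i},I_\ell}<\varepsilon/2$ for \emph{every} iteration~$\ell$.

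For the finitely many multi-indices $\bm{i}\in\N^N$ with $|\bm{i}|_1\leq L$, I classify each according to its long-time behavior: either (a) $\bm{i}\in I_\ell$ eventually, in which case $\bm{i}\notin\mathcal{M}_{I_\ell}$ for all sufficiently large~$\ell$ and contributes~$0$; or (b) $\bm{i}$ enters the margin at some finite step and never gets selected, so by Proposition~\ref{prop:multi_index_in_margin_has_0_ptwise_estim} (valid for both profit choices) $\zeta_{\bm{i},I_\ell}\to 0$; or (c) $\bm{i}$ never enters $\mathcal{M}_{I_\ell}$ at all, contributing~$0$. In each case there exists $\ell_{\bm{i}}$ beyond which $\zeta_{\bm{i},I_\ell}$ is below $\varepsilon/(2\#\{\bm{j}:|\bm{j}|_1\leq L\})$ whenever $\bm{i}\in\mathcal{M}_{I_\ell}$. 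Since there are finitely many such indices, taking $\ell_0:=\max_{\bm{i}}\ell_{\bm{i}}$ yields, for $\ell\geq\ell_0$,
\begin{equation*}
\sum_{\bm{i}\in\mathcal{M}_{I_\ell},\,|\bm{i}|_1\leq L}\zeta_{\bm{i},I_\ell}<\varepsilon/2,
\end{equation*}
and combining with the tail bound gives $\zeta_{SC,I_\ell}<\varepsilon$.

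The main obstacle is conceptual rather than technical: one has to justify splitting the dynamically growing margin $\mathcal{M}_{I_\ell}$ uniformly in~$\ell$. This is precisely what makes Proposition~\ref{prop:estim_ptwise_estimator} indispensable — its bound depends only on $\bm{i}$ and not on $I_\ell$, which is what allows the high-degree tail to be made uniformly small. The finite low-degree part then succumbs to Proposition~\ref{prop:multi_index_in_margin_has_0_ptwise_estim}. Both profit choices are handled simultaneously because that proposition treats them in parallel, using Proposition~\ref{prop:limit_profit_without_work} for the workless profit and Proposition~\ref{prop:limit_profit_with_work} for the profit with work. The conclusion on $\norm{u-S_{I_\ell}[u]}_{L^\infty(\Gamma,V)}$ follows from the reliability estimate $\norm{u-S_{I_\ell}[u]}_{L^\infty(\Gamma,V)}\lesssim \zeta_{SC,I_\ell}$ stated at the beginning of Section~\ref{sec:paramconv}.
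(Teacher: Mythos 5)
Your proposal is correct and is essentially the paper's own argument made explicit: the paper invokes dominated convergence for the series $\zeta_{SC,I_\ell}=\sum_{\bm{i}\in\N^N}\zeta_{\bm{i},I_\ell}\mathbbm{1}_{\mathcal{M}_{I_\ell}}(\bm{i})$ (with the $\ell$-uniform summable majorant coming from Proposition~\ref{prop:estim_ptwise_estimator} via Remark~\ref{rk:boundedness_param_estim}, and the same three-case classification for termwise convergence), while you unroll dominated convergence as an $\varepsilon$--$L$ tail split. Same decomposition, same two key ingredients (Proposition~\ref{prop:estim_ptwise_estimator} for the uniform tail, Proposition~\ref{prop:multi_index_in_margin_has_0_ptwise_estim} for the persistent-margin indices), same conclusion via reliability.
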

\begin{proof}
The a-posteriori error estimator at step $\ell\in\N$ can be written as 
\[\zeta_{SC, I_\ell} = \sum_{\bm{i}\in\N^N} \zeta_{\bm{i}, I_\ell} \mathbbm{1}_{\mathcal{M}_{I_\ell}}(\bm{i}),\]
where $\mathbbm{1}_{\mathcal{M}_{I_\ell}}$ is the indicator function of the margin $\mathcal{M}_{I_\ell}$.
In order to prove that the sequence vanishes by dominated convergence, it is sufficient to prove that
$(i)$ for any $\bm{i}\in\N^N$, $\lim_{\ell \rightarrow\infty} \zeta_{\bm{i}, I_\ell} \mathbbm{1}_{\mathcal{M}_{I_\ell}}=0$
and $(ii)$ that the sequence $\left( \zeta_{SC, I_\ell} \right)_\ell$ is bounded.
The uniform boundedness $(ii)$ was proved in Remark \ref{rk:boundedness_param_estim}.
As for $(i)$, observe that at least one of the following cases applies:
\begin{itemize}
\item  $\bm{i}$ is eventually added to $I_\ell$, thus $\mathbbm{1}_{\mathcal{M}_{I_\ell}} (\bm{i})$ is eventually zero;
\item $\bm{i}$ is never added to the margin (for all $\ell\in\N$, $\bm{i}\in \N^N \setminus \mathcal{M}_{I_\ell}$), thus $\zeta_{\bm{i}, I_\ell}$ is constantly zero;
\item it exists $\bar{\ell}\in\N$ such that for any $\ell\geq \bar{\ell}$, $\bm{i}\in\mathcal{M}_{I_\ell}$. In this case, due to Proposition \ref{prop:multi_index_in_margin_has_0_ptwise_estim}, $\lim_{\ell \rightarrow \infty} \zeta_{\bm{i}, I_\ell} = 0.$
\end{itemize}
This concludes the proof.
\end{proof}
\subsection{Convergence of the parametric error}\label{sec:errconv}
{In the present section we denote by $\mathcal{L}(L^{\infty}(\Gamma, V))$ the space of linear bounded operators $T: L^{\infty}(\Gamma, V)\rightarrow L^{\infty}(\Gamma, V)$. It is well known that this is a Banach space when equipped with the usual operator norm 
\begin{align*}
\norm{T}_{\mathcal{L}(L^{\infty}(\Gamma, V))} \coloneqq \sup_{u\in L^{\infty}(\Gamma, V), u\neq 0} \frac{\norm{Tu}_{L^{\infty}(\Gamma, V)}}{\norm{u}_{L^{\infty}(\Gamma, V)}}.
\end{align*}
}
We have the following monotonicity property of the approximation error of $S_I[\cdot]$ with respect to $I$:
\begin{lemma}\label{lemma:monotonicity_error}
Let $u\in C^0(\Gamma, V)$ and $I, J\subset \N^N$ downward-closed multi-index sets such that $J\subset I$. Then
\begin{equation*}
\norm{u - S_{I}[u]}_{L^{\infty}(\Gamma, V)}  
\leq \left( 1 + \norm{S_I}_{\mathcal{L}(L^{\infty}(\Gamma, V))}\right)
\norm{u - S_{J}[u]}_{L^{\infty}(\Gamma, V)}.
\end{equation*}
\end{lemma}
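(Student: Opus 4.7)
The plan is to exploit the projection character of the sparse grid operator $S_I$ combined with the nesting of polynomial spaces induced by $J\subset I$. The key algebraic observation is that $S_I$ is idempotent on $\mathbb{P}_I(\Gamma,V)$ (indeed, it is interpolatory at the collocation nodes $\mathcal{H}_I$, and reproduces any element of $\mathbb{P}_I(\Gamma,V)$, as noted after \eqref{inclusion_exclusion_formula}), so the statement will follow from a standard projection-type argument of C\'ea's lemma flavor.

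First I would observe that $J\subset I$ and the downward-closedness of both index sets imply $\mathbb{P}_J(\Gamma,V) \subset \mathbb{P}_I(\Gamma,V)$. Consequently $S_J[u] \in \mathbb{P}_I(\Gamma,V)$, so by the reproducing property,
\begin{equation*}
S_I\bigl[S_J[u]\bigr] = S_J[u].
\end{equation*}

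Next, I would use this identity to rewrite the error. Adding and subtracting $S_J[u]$ yields
\begin{equation*}
u - S_I[u] \;=\; \bigl(u - S_J[u]\bigr) \;-\; \bigl(S_I[u] - S_J[u]\bigr) \;=\; \bigl(u - S_J[u]\bigr) \;-\; S_I\bigl[u - S_J[u]\bigr],
\end{equation*}
where the last equality uses linearity of $S_I$ together with $S_I[S_J[u]]=S_J[u]$. Taking the $L^\infty(\Gamma,V)$-norm, applying the triangle inequality and the definition of the operator norm $\norm{S_I}_{\mathcal{L}(L^{\infty}(\Gamma, V))}$ gives
\begin{equation*}
\norm{u-S_I[u]}_{L^\infty(\Gamma,V)} \;\leq\; \bigl(1 + \norm{S_I}_{\mathcal{L}(L^{\infty}(\Gamma, V))}\bigr)\,\norm{u-S_J[u]}_{L^\infty(\Gamma,V)},
\end{equation*}
which is exactly the claimed bound.

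There is no real obstacle here: the only nontrivial ingredients are the inclusion $\mathbb{P}_J(\Gamma,V)\subset\mathbb{P}_I(\Gamma,V)$ (immediate from $J\subset I$ and the definition of $\mathbb{P}_I$) and the interpolatory/reproducing property of $S_I$ on $\mathbb{P}_I(\Gamma,V)$ (which rests on the nestedness of the Clenshaw–Curtis nodes already invoked in the paper). The estimate is independent of the particular choice of profit and therefore applies verbatim in both the workless-profit and profit-with-work variants of the algorithm.
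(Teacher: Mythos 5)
Your argument is correct and coincides with the paper's own proof: both hinge on the reproducing identity $S_I[S_J[u]]=S_J[u]$ (a consequence of $J\subset I$ and nestedness), rewrite $u-S_I[u]=(\bm{1}-S_I)(\bm{1}-S_J)u$, and conclude by the triangle inequality and the operator norm of $S_I$.
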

\begin{proof}
With the identity operator $\bm{1}$ on $C^0(\Gamma, V)$, observe that
\begin{equation*}
u - S_{I}[u] = \left( \bm{1} - S_{I}\right) u = \left( \bm{1} - S_{I}\right) \left( \bm{1} - S_{J}\right) u
\end{equation*}
since $J\subset I$ implies $S_{I} \left[S_{J} [u]\right] = S_{J} [u]$. The triangle inequality concludes the proof.
\end{proof}

In the present section we provide error estimates for $S_{I_\ell}$ with respect to the number of iterations $\ell$. We consider both the possible definitions of profit (\ref{def:workless_profit}) and (\ref{def:profit_work}).

\begin{remark} \label{rk:bound_oeprator_norm_S_I}
The quantity $\norm{S_{I_\ell}}_{\mathcal{L}(L^\infty(\Gamma, V))}$ from Lemma~\ref{lemma:monotonicity_error} satisfies
\begin{itemize}
\item Workless profit: 
${I_\ell} = \mathcal{R}_{\bm{i}_{\ell-1}}$, i.e. $S_{I_\ell}$ is actually a tensor-product Lagrange interpolant (see Section \ref{sec:rks_workless_profit}). Therefore, we can estimate
\begin{equation} \label{estim_norm_S_I_workless_profit}
\norm{S_{I_\ell}}_{\mathcal{L}(L^\infty(\Gamma, V))}
= \norm{\bigotimes_{n=1}^N \mathcal{U}_n^{m(\langle \bm{i}_{\ell-1}, \bm{e}_n\rangle )}}_{\mathcal{L}(L^\infty(\Gamma, V))} 
\leq \prod_{n=1}^N \langle \bm{i}_{\ell-1}, \bm{e}_n\rangle
\leq \left(1+\frac{\ell-1}{N}\right)^N{,}
\end{equation}
where in the first inequality we used the stability bound for the Lagrange interpolant (\ref{stab_explicit_i}) and Lemma \ref{lemma:bould_lambda_i_l_wrt_l} for the second inequality.
\item Profit with work: 
Partitioning $I_\ell$ with the sequence $\left( A_{\bm{i}_m, I_m}\right)_{m=1}^{\ell-1}$ and using Lemma~\ref{lemma:bould_lambda_i_l_wrt_l}.
\begin{align} \label{estim_norm_S_I_work_profit}
\begin{split} 
\norm{S_{I_\ell}}_{\mathcal{L}(L^\infty(\Gamma, V))} 
& \leq \sum_{\bm{i}\in I_\ell} \norm{\Delta^{m(\bm{i})}}_{\mathcal{L}(L^\infty(\Gamma, V))} 
\leq \sum_{m=1}^{\ell-1} \# A_{\bm{i}_m, I_m} \Lambda_{\bm{i}_m}\\
& \leq (\ell-1) \left(1+\frac{\ell-1}{N}\right)^{2N}{.}
\end{split}
\end{align}
\end{itemize}
\end{remark}

We finally prove the parametric error estimates, first with workless profit, then with profit with work.
\begin{theorem} \label{th:rate_conv_workless_profit}
Consider Algorithm \ref{algo:SC_adaptive_algo} with workless profit defined in (\ref{def:workless_profit}). 
Denote by $I_\ell$ the downward-closed multi-index sets chosen by the algorithm at step $\ell>0$ and
by $S_{I_\ell}[u]$ the corresponding sparse grid stochastic collocation approximation of the analytic function $u:\Gamma\rightarrow V$.
Then,
\begin{equation}\label{estim_error_SGSC_interpol_with_rate}
\norm{u - S_{I_\ell}[u]}_{L^\infty(\Gamma, V)}  \lesssim
\left( 1+ \left(1+\frac{\ell-1}{N}\right)^N\right)
N
 \ell^2 e^{-\frac{\sigma}{2} m(1+\frac{\ell}{N})}\qquad \forall \ell>0.
\end{equation}
\end{theorem}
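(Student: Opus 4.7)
The strategy is to reduce the full error bound to a rate-producing bound by applying the monotonicity result of Lemma~\ref{lemma:monotonicity_error}. By~\eqref{I_l_is_rectangle} the multi-index set $I_\ell$ is an axis-aligned rectangle $\mathcal{R}_{\bm{i}_{\ell-1}}$, so $S_{I_\ell}$ collapses to the genuine tensor product Lagrange interpolant $\bigotimes_{n=1}^N \mathcal{U}_n^{m(\langle \bm{i}_{\ell-1},\bm{e}_n\rangle)}$ and, by Remark~\ref{rk:bound_oeprator_norm_S_I}, its operator norm is bounded by $(1+(\ell-1)/N)^N$. This immediately accounts for the first factor $1+(1+(\ell-1)/N)^N$ in the desired bound, which is precisely $1+\norm{S_{I_\ell}}_{\mathcal{L}(L^\infty(\Gamma,V))}$.

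With this in mind, I would apply Lemma~\ref{lemma:monotonicity_error} with a judiciously chosen downward-closed $J\subseteq I_\ell$:
\begin{equation*}
\norm{u - S_{I_\ell}[u]}_{L^\infty(\Gamma,V)} \leq \bigl(1+\norm{S_{I_\ell}}\bigr)\,\norm{u - S_J[u]}_{L^\infty(\Gamma,V)},
\end{equation*}
and reduce the task to controlling $\norm{u - S_J[u]}_{L^\infty(\Gamma,V)}$. The natural candidate is a smaller rectangle $J=\mathcal{R}_{\bar{\bm{i}}}$ with $\bar{\bm{i}}\leq\bm{i}_{\ell-1}$ componentwise; on such a set the residual admits the telescopic expansion $u-S_J[u]=\sum_{\bm{j}\notin J}\Delta^{m(\bm{j})}u$ following from~\eqref{approx_series_interpol} and the definition of $S_J$, and each summand can be bounded by the exponentially decaying estimate of Lemma~\ref{lemma:estim_hierarchical_surplus}, turning the residual into a controllable tail sum.

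The main obstacle is to extract the specific exponent $\tfrac{\sigma}{2}m(1+\ell/N)$. Since $|\bm{i}_{\ell-1}|_1=N+\ell-1$ by~\eqref{norm_i_l}, the arithmetic-mean-geometric-mean inequality already exploited in Lemma~\ref{lemma:bould_lambda_i_l_wrt_l} guarantees that at least one component of $\bm{i}_{\ell-1}$ exceeds $1+(\ell-1)/N$, so the rectangle $I_\ell$ always contains a coordinate refined at least to that level. The classical ``half-the-exponent'' trick $e^{-\sigma|m(\bm{j}-\bm{1})|_1}=e^{-\sigma|m(\bm{j}-\bm{1})|_1/2}\cdot e^{-\sigma|m(\bm{j}-\bm{1})|_1/2}$ then splits the bound into a pinned decay $e^{-\frac{\sigma}{2}m(1+\ell/N)}$, attached to the most refined direction, and a remainder that, combined with the $\Lambda_{\bm{j}}$ factors from Lemma~\ref{lemma:estim_hierarchical_surplus}, sums to a constant uniformly in $\ell$ by the same absolute-convergence argument used in Remark~\ref{rk:boundedness_param_estim}. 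The polynomial prefactor $N\ell^2$ then absorbs the $N$ elements of the margin $\mathcal{M}_{I_\ell}$ and the $\Lambda_{\bm{i}_{\ell-1}}$-type growth controlled by Lemma~\ref{lemma:bould_lambda_i_l_wrt_l}; combining these ingredients delivers~\eqref{estim_error_SGSC_interpol_with_rate}.
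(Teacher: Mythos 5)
Your identification of the monotonicity step and the operator-norm factor $1+\norm{S_{I_\ell}}_{\mathcal{L}(L^\infty(\Gamma,V))}\lesssim 1+(1+(\ell-1)/N)^N$ via Remark~\ref{rk:bound_oeprator_norm_S_I} matches the paper. The gap is in what you do with $\norm{u-S_J[u]}$ afterwards. You propose to bound the residual by a tail sum $\sum_{\bm{j}\notin J}\norm{\Delta^{m(\bm{j})}u}$ and apply Lemma~\ref{lemma:estim_hierarchical_surplus} termwise, then ``pin'' a factor $e^{-\frac{\sigma}{2}m(1+\ell/N)}$ to the most-refined direction $\bar n$. This fails: for any sub-rectangle $J\subseteq I_\ell=\mathcal{R}_{\bm{i}_{\ell-1}}$, the complement contains indices $\bm{j}$ that exit the rectangle only along the \emph{least}-refined direction, with $j_{\bar n}=1$. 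For such $\bm{j}$, $|m(\bm{j}-\bm 1)|_1$ is $O(1)$ independently of $\ell$, so Lemma~\ref{lemma:estim_hierarchical_surplus} only gives a constant-size bound and the tail sum does not decay in $\ell$. Concretely, with $N=2$ and $\bm{i}_{\ell-1}=(\ell,1)$, the index $\bm{j}=(1,2)\notin I_\ell$ contributes $\lesssim \Lambda_{(1,2)}e^{-\sigma m(1)}$ to the tail, a constant. The AM--GM fact that one coordinate of $\bm{i}_{\ell-1}$ is large tells you about the refined direction, but the tail is dominated by the \emph{unrefined} ones, which your a~priori argument does not control.

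What your proposal omits is precisely the adaptivity of the algorithm. The paper's proof does not bound $\norm{u-S_{I_{\ell'}}[u]}$ by an a~priori tail sum; it instead invokes the \emph{reliability} of the error estimator, $\norm{u-S_{I_{\ell'}}[u]}\lesssim\sum_{\bm{i}\in\mathcal{M}_{I_{\ell'}}}\zeta_{\bm{i},I_{\ell'}}=\sum_{n=1}^N\mathcal{P}_{n,I_{\ell'}}$, and then uses the \emph{greedy profit-maximization} at iteration $\ell'$ (chosen as the last iteration at which direction $\bar n$ was selected) to dominate the whole sum by $N\,\mathcal{P}_{n(\ell'),I_{\ell'}}$. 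Only then is the $\bar n$-direction depth $r_{\bar n,\ell}\gtrsim 1+\ell/N$ used, via the estimate~\eqref{estim_zeta_i_workless_profit} for $\zeta_{\bm{j},I_{\ell'}}$, to extract $e^{-\frac{\sigma}{2}m(1+\ell/N)}$. The argmax property is what encodes that an unrefined direction had a small estimator; without it, nothing prevents the unrefined directions from carrying $O(1)$ interpolation error. Your sketch would need to be replaced by this estimator-plus-greedy argument to close.
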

\begin{proof}
Fix $\ell>0$.
Recall the definition of $r_{n,\ell}$ from~\eqref{def:rectangle_width} and
consider the direction 
$\bar{n}\in\{1,\ldots,N\}$ which maximizes $r_{n,\ell}$.
With $n(\ell)$ from~\eqref{i_l+1_at_distance_one_from_i_l}, define
\[\ell' \coloneqq \max \left\{\ell'\in 1,\ldots, \ell : n(\ell') = \bar{n}\right\}\]
 and observe that with each iteration, at least one side of the axis aligned rectangle $I_\ell$ is increased by one, i.e.,
\begin{equation} \label{bound_below_r}
r_{n(\ell'), \ell'} = r_{\bar{n}, {\ell}} \geq 1 + \frac{\ell}{N}.
\end{equation} 
Applying estimate (\ref{estim_norm_S_I_workless_profit}) form the previous remark, we can bound 
\begin{equation*}
\norm{u - S_{I_\ell}[u]}_{L^\infty(\Gamma, V)}  \leq \left( 1 + \left(1+\frac{\ell-1}{N}\right)^N\right) \norm{u - S_{I_{\ell '}}[u]}_{L^\infty(\Gamma, V)}.
\end{equation*}
Now, apply the reliability of the error estimator proved in \cite[Proposition 4.3]{guignard2018posteriori} to obtain
\begin{equation*}
\norm{u - S_{I_{\ell '}}[u]}_{L^\infty(\Gamma, V)}  \lesssim \sum_{\bm{i}\in \mathcal{M}_{I_{\ell '}}} \zeta_{\bm{i}, I_{\ell '}}.
\end{equation*}
Recalling the definition of $A_{n, {I_{\ell '}}}$ and $\mathcal{P}_{n, I_{\ell '}}$ for $n\in 1,\ldots, N$ given in Section \ref{sec:rks_workless_profit}, we have
\begin{equation*}
\sum_{\bm{i}\in \mathcal{M}_{I_{\ell '}}} \zeta_{\bm{i}, I_{\ell '}} = 
\sum_{n=1}^N \sum_{\bm{i}\in A_{n, I_{\ell '}}} \zeta_{\bm{i}, I_{\ell '}} = 
\sum_{n=1}^N \mathcal{P}_{n, I_{\ell '}}
\leq N \mathcal{P}_{n(\ell'), I_{\ell '}}.
\end{equation*}
The profit $\mathcal{P}_{n(\ell'), I_{\ell '}}$ can now be bounded as a function of $r_{n(\ell'), \ell'}$ as we did in Proposition \ref{prop:limit_profit_without_work}
\begin{align*}
\mathcal{P}_{n(\ell'), I_{\ell '}} 
=  \sum_{\bm{j}\in A_{n(\ell'), I_{\ell '}}} \zeta_{\bm{j}, I_{\ell '}} 
\leq  \sum_{\bm{j}\in A_{n(\ell'), I_{\ell '}}} 
\left(\prod_{{k}=1}^N j_k \right)^2 e^{-\frac{\sigma}{2} \vert m(\bm{j} -\bm{1})\vert} 
\lesssim   r_{n(\ell'), \ell'}^2 e^{-\frac{\sigma}{2} m(r_{n(\ell'), \ell'})},
\end{align*}
where in the first inequality we have applied the estimate (\ref{estim_zeta_i_workless_profit}) on $\zeta_{\bm{j}, I_{\ell '}}$
and in the second we have exploited the fact that, for $\bm{j}\in A_{n(\ell'), I_{\ell '}}$, $j_{n(\ell')} = r_{n(\ell'), \ell'} + 1$.
Recalling that $1+\frac{\ell}{N} \leq r_{n(\ell'), \ell'} \leq \ell{+1}$, we obtain
\[\mathcal{P}_{n(\ell'), I_{\ell '}} \lesssim \ell^2 e^{-\frac{\sigma}{2} m(1+\frac{\ell}{N})}. \]
\end{proof}

Let us now prove the analogous result for the algorithm driven by profit with work.
\begin{theorem} \label{th:rate_conv_profit_work}
Consider Algorithm \ref{algo:SC_adaptive_algo} with profit with work defined in (\ref{def:profit_work}). 
Denote by $I_\ell$ the downward-closed multi-index sets chosen by the algorithm at step $\ell>0$ and
by $S_{I_\ell}[u]$ the corresponding sparse grid stochastic collocation approximation of the analytic function $u:\Gamma\rightarrow V$.
Then,
\begin{equation}\label{estim_error_SGSC_interpol_with_rate_profit}
\norm{u - S_{I_\ell}[u]}_{L^\infty(\Gamma, V)}  \lesssim
\ell^5
\left(\frac{\ell}{N}\right)^{4N}
2^{\ell\left(1-\frac{1}{N}\right)}
e^{-\frac{\sigma}{2} m\left(\ell^{\frac{1}{N}}\right)} 
\qquad \forall \ell>0.
\end{equation}
\end{theorem}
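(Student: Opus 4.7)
The plan is to mirror the strategy of Theorem~\ref{th:rate_conv_workless_profit}: apply monotonicity to reduce the error on $I_\ell$ to the error on a suitable subrectangle $\mathcal{R}_{\bm{k}^\star}\subset I_\ell$, then use reliability on that rectangle and bound the resulting pointwise estimators via Proposition~\ref{prop:estim_ptwise_estimator}. The key difficulty compared with the workless-profit case is that $I_\ell$ is no longer guaranteed to be a rectangle, so the subrectangle must be identified via a separate pigeonhole argument.

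First I would apply Lemma~\ref{lemma:monotonicity_error} with $J=\mathcal{R}_{\bm{k}^\star}$ for some $\bm{k}^\star\in I_\ell$ (to be chosen in the next step); downward closedness of $I_\ell$ automatically yields $\mathcal{R}_{\bm{k}^\star}\subset I_\ell$, so monotonicity gives
\begin{equation*}
\norm{u-S_{I_\ell}[u]}_{L^\infty(\Gamma,V)}\leq \bigl(1+\norm{S_{I_\ell}}_{\mathcal{L}(L^\infty(\Gamma,V))}\bigr)\,\norm{u-S_{\mathcal{R}_{\bm{k}^\star}}[u]}_{L^\infty(\Gamma,V)}.
\end{equation*}
The stability factor is controlled by $1+(\ell-1)(1+(\ell-1)/N)^{2N}$ via~\eqref{estim_norm_S_I_work_profit}, producing a prefactor of order $\ell\,(\ell/N)^{2N}$.

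Next I would select $\bm{k}^\star$ so that its largest coordinate is as big as possible. Since at least one new multi-index is added at each iteration, $\#I_\ell\geq \ell+1$. Combined with the downward closedness of $I_\ell$, which forces $I_\ell$ to lie inside the axis-aligned box of side equal to its coordinatewise maximum, a pigeonhole argument yields the existence of $\bm{k}^\star\in I_\ell$ whose largest coordinate is at least $(\ell+1)^{1/N}$. This is the source of the $e^{-\frac{\sigma}{2}m(\ell^{1/N})}$ term: applying reliability to $S_{\mathcal{R}_{\bm{k}^\star}}$ and bounding each margin contribution through Proposition~\ref{prop:estim_ptwise_estimator} forces at least one exponential of argument $m(\ell^{1/N})$ to appear.

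The remaining polynomial and exponential factors $\ell^5\,(\ell/N)^{4N}\,2^{\ell(1-1/N)}$ follow from combining the above with Lemma~\ref{lemma:bould_lambda_i_l_wrt_l}, Lemma~\ref{lemma:bound_card_A_i}, the work bounds~\eqref{bounds_work_i}, and the profit-maximization inequality $\mathcal{P}_{\bm{i},I_\ell}\leq\mathcal{P}_{\bm{i}_\ell,I_\ell}$ valid for every $\bm{i}\in\mathcal{M}_{I_\ell}$, which allows the remaining margin contributions to be dominated by the maximum profit. The step I expect to be the most delicate is this final bookkeeping: one has to balance polynomial prefactors coming from $\Lambda_{\bm{i}}$ and from the size of the margin against the exponential decay of Proposition~\ref{prop:estim_ptwise_estimator}, and to absorb the work-ratio $W_{\bm{i}}/W_{\bm{i}_\ell}$ arising from the profit inequality into the stated $2^{\ell(1-1/N)}$ factor; in the workless-profit analysis this was painless because the margin of a rectangle had a clean, essentially one-dimensional description, whereas here the margin points have coordinates that vary widely and must be bucketed by their distance from $\bm{k}^\star$.
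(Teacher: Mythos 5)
Your high-level plan diverges from the paper's at a critical point, and the divergence creates a genuine gap: after applying monotonicity with $J=\mathcal{R}_{\bm{k}^\star}$ and then reliability on $S_{\mathcal{R}_{\bm{k}^\star}}[u]$, you must bound $\sum_{\bm{i}\in\mathcal{M}_{\mathcal{R}_{\bm{k}^\star}}}\zeta_{\bm{i},\mathcal{R}_{\bm{k}^\star}}$. You propose to do this "through Proposition~\ref{prop:estim_ptwise_estimator}," but the margin of the rectangle $\mathcal{R}_{\bm{k}^\star}$ contains multi-indices arbitrarily close to the origin (for instance $\bm{1}+\bm{e}_n$ for any direction $n$ in which $\bm{k}^\star$ has a small coordinate), and for those Proposition~\ref{prop:estim_ptwise_estimator} gives $\zeta_{\bm{i},\mathcal{R}_{\bm{k}^\star}}\lesssim N\Lambda_{\bm{i}}^2 e^{-\sigma|m(\bm{i}-\bm{1})|_1}$ with $|m(\bm{i}-\bm{1})|_1$ of order one, i.e.\ no decay at all. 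Only the face in the single "long" direction $\bar n$ decays; the other $N-1$ faces produce $O(1)$ contributions, and the total cannot be pushed below a constant by this route. The profit-maximization inequality you invoke to rescue the argument, $\mathcal{P}_{\bm{i},I_\ell}\leq\mathcal{P}_{\bm{i}_\ell,I_\ell}$, applies to the margin of the \emph{current iterate} $I_\ell$ and the maximizer $\bm{i}_\ell$ at step $\ell$; there is no analogous inequality over $\mathcal{M}_{\mathcal{R}_{\bm{k}^\star}}$, since $\mathcal{R}_{\bm{k}^\star}$ is not an iterate of the algorithm and $\bm{k}^\star$ need not ever have been a profit maximizer (it may have entered $I_\ell$ merely as part of a set $A_{\bm{i}_m,I_m}$).

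The paper sidesteps exactly this obstacle by not replacing $I_\ell$ with a rectangle. It keeps the coordinate-pigeonhole step (your $\bar r\geq\ell^{1/N}$ is the same as theirs) but then traces $\bar r$ back to the \emph{iteration} $\ell'$ at which the profit maximizer $\bm{i}_{\ell'}$ itself has $\bar n$-th coordinate equal to $\bar r$, and applies monotonicity with $J=I_{\ell'}$. At step $\ell'$, the inequality $\mathcal{P}_{\bm{j},I_{\ell'}}\leq\mathcal{P}_{\bm{i}_{\ell'},I_{\ell'}}$ holds for every $\bm{j}\in\mathcal{M}_{I_{\ell'}}$ by construction of the algorithm. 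Writing $\mathcal{M}_{I_{\ell'}}=\bigcup_{\bm{j}\in\mu_{I_{\ell'}}}A_{\bm{j}}$ and inserting the profits, the whole margin sum is dominated by $\mathcal{P}_{\bm{i}_{\ell'}}$ times a total-work factor, and the exponential decay of Proposition~\ref{prop:estim_ptwise_estimator} is then applied \emph{only} to the indices in $A_{\bm{i}_{\ell'}}$, all of which have $\bar n$-th coordinate equal to $\bar r$ and hence decay as claimed. The near-origin margin points are never estimated by their own $\zeta$'s; they are controlled by their work and the profit inequality. To repair your proposal you would need to (a) choose the intermediate set as an iterate $I_{\ell'}$ rather than a rectangle so that a profit-maximizer is available, and (b) defer Proposition~\ref{prop:estim_ptwise_estimator} until after the whole margin sum has been reduced to $A_{\bm{i}_{\ell'}}$.
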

\begin{proof}
For brevity, we write $\zeta_{\bm{i}}$, $A_{\bm{i}}$ and $\mathcal{P}_{\bm{i}}$ instead of $\zeta_{\bm{i}, I}$, $A_{\bm{i}, I}$ and $\mathcal{P}_{\bm{i}, I}$ respectively.
Fix $\ell>0$ and consider $\bar{r} \coloneqq \max_{\bm{i}\in I_\ell} \vert \bm{i}\vert_{\ell ^\infty}$ 
and $\bar{n}\in 1,\ldots, N$ such that, for some $\bm{i}\in I_\ell$, $i_{\bar{n}} = \bar{r}$.
Observe that $\# I_\ell\gtrsim \ell$ and hence 
\[\bar{r} \geq \ell^{\frac{1}{N}}.\]
Consider {the last step $\ell'$ in which $I_\ell$ has been extended in direction $\bar{n}$, i.e.,}
\begin{equation} \label{def:l'_maximun_component}
\ell' \coloneqq \max \left\{\ell' \in 1,\ldots, \ell : \langle \bm{i}_{\ell '}, \bm{e}_{\bar{n}}\rangle = \bar{r} \textrm{ and } \bm{i}_{\ell '} - \bm{e}_{\bar{n}}\in I_{\ell '}\right\}.
\end{equation}
Applying estimate (\ref{estim_norm_S_I_work_profit}) from Remark~\ref{rk:bound_oeprator_norm_S_I}, we can bound 
\begin{equation} \label{bound_error_step_l_with_erro_step_l'}
\norm{u - S_{I_\ell}[u]}_{L^\infty(\Gamma, V)}  
\leq \left( 1 + (\ell-1) \left(1+\frac{\ell-1}{N}\right)^{2N}\right)
\norm{u - S_{I_{\ell '}}[u]}_{L^\infty(\Gamma, V)}.
\end{equation}
In \cite[Proposition 4.3]{guignard2018posteriori}, the reliability of the error estimator is proved
\begin{equation*}
\norm{u - S_{I_{\ell '}}[u]}_{L^\infty(\Gamma, V)}  \lesssim \sum_{\bm{i}\in \mathcal{M}_{I_{\ell '}}} \zeta_{\bm{i}}.
\end{equation*}
Recalling the definition of $\mu_{I_{\ell '}}$, the set of maximal elements in $\mathcal{M}_{I_{\ell '}}$ (Definition \ref{def:maximal_points_in_margin}), the margin can be represented (but in general not partitioned) as
\begin{align}\label{eq:partition}
\mathcal{M}_{I_{\ell '}} = \bigcup_{\bm{j}\in \mu_{I_{\ell '}}} A_{\bm{j}}.
\end{align}
Thus, we can estimate
\begin{align*}
\sum_{\bm{i}\in \mathcal{M}_{I_{\ell '}}} \zeta_{\bm{i}} 
\leq & \sum_{\bm{j}\in \mu_{I_{\ell '}}} \sum_{\bm{i}\in A_{\bm{j}}} \zeta_{\bm{i}} 
=  \sum_{\bm{j}\in \mu_{I_{\ell '}}} \frac{\sum_{\bm{i}\in A_{\bm{j}}} \zeta_{\bm{i}}}{\sum_{\bm{i}\in A_{\bm{j}}} W_{\bm{i}}} \sum_{\bm{i}\in A_{\bm{j}}} W_{\bm{i}} 
=  \sum_{\bm{j}\in \mu_{I_{\ell '}}} \mathcal{P}_{\bm{j}} \sum_{\bm{i}\in A_{\bm{j}}} W_{\bm{i}} \\
\leq & \mathcal{P}_{\bm{i}_{\ell '}} \sum_{\bm{j}\in \mu_{I_{\ell '}}} \sum_{\bm{i}\in A_{\bm{j}}} W_{\bm{i}} 
=  \left( \sum_{\bm{i}\in A_{\bm{i}_{\ell '}}} \zeta_{\bm{i}} \right)
\frac{1}{\sum_{\bm{i}\in A_{\bm{i}_{\ell '}}} W_{\bm{i}}} 
\left(\sum_{\bm{j}\in \mu_{I_{\ell '}}} \sum_{\bm{i}\in A_{\bm{j}}} W_{\bm{i}} \right),
\end{align*}
where in the second inequality we have used the fact that $\mathcal{P}_{\bm{i}_{\ell '}} \geq \mathcal{P}_{\bm{j}}$ for any $\bm{j}\in \mathcal{M}_{I_{\ell '}}$.
Let us now estimate each of the three factors separately.
\begin{itemize}
\item $\sum_{\bm{i}\in A_{\bm{i}_{\ell '}}} \zeta_{\bm{i}}$: As in the proof of Theorem~\ref{th:rate_conv_workless_profit} (using the estimate from Proposition \ref{prop:estim_ptwise_estimator} instead of the one in~\eqref{estim_zeta_i_workless_profit}) we obtain with $\ell^{\frac{1}{N}} \leq \bar{r} \leq {\ell+1}$ that 
\begin{equation}\label{estim_sum_ptwise_err_estim}
\sum_{\bm{i}\in A_{\bm{i}_{\ell '}}} \zeta_{\bm{i}} \lesssim N \ell^2 e^{-\frac{\sigma}{2} m\left(\ell^{\frac{1}{N}}\right)}.
\end{equation}
\item $\sum_{\bm{i}\in A_{\bm{i}_{\ell '}}} W_{\bm{i}}$: There holds
\begin{equation}\label{estim_reciprocal_work_i_l}
\sum_{\bm{i}\in A_{\bm{i}_{\ell '}}} W_{\bm{i}} 
\geq W_{\bm{i}_{\ell '}} 
\geq m(\langle \bm{i}_{\ell '}, \bm{e}_{\bar{n}}\rangle) - m(\langle \bm{i}_{\ell '}, \bm{e}_{\bar{n}}\rangle-1)
\geq 2^{\bar{r}-2} \geq 2^{\frac{\ell}{N}-2}
\end{equation}
\item $\sum_{\bm{j}\in \mu_{I_{\ell '}}} \sum_{\bm{i}\in A_{\bm{j}}} W_{\bm{i}}$: We observe
\begin{equation*}
\sum_{\bm{j}\in \mu_{I_{\ell '}}} \sum_{\bm{i}\in A_{\bm{j}}} W_{\bm{i}} = 
\sum_{\bm{i}\in \mathcal{M}_{I_{\ell '}}} \#\left\{\bm{j}\in \mu_{I_{\ell '}} : \bm{i}\in A_{\bm{j}} \right\} W_{\bm{i}}.
\end{equation*}
Thus, being  $\# \left\{\bm{j}\in \mu_{I_{\ell '}} : \bm{i}\in A_{\bm{j}} \right\} \leq \# \mathcal{M}_{I_{\ell '}} $, we can estimate
\begin{equation} \label{estim_double_sum_works}
\sum_{\bm{j}\in \mu_{I_{\ell '}}} \sum_{\bm{i}\in A_{\bm{j}}} W_{\bm{i}} 
\leq \#\mathcal{M}_{I_{\ell '}} \sum_{\bm{i}\in \mathcal{M}_{I_{\ell '}}} W_{\bm{i}} 
\leq \left( \#\mathcal{M}_{I_{\ell '}} \right)^2 \max_{\bm{i}\in \mathcal{M}_{I_{\ell '}}} W_{\bm{i}}.
\end{equation}
An estimate for $\#\mathcal{M}_{I_{\ell '}}$ is given in~\eqref{eq:bound_card_A_i}. 
For the second factor, use the bound on $W_{\bm{i}}$ from (\ref{bounds_work_i}) and the fact that for any $\bm{i}\in \mathcal{M}_{I_\ell}, \vert\bm{i}\vert_1 \leq N+\ell$ to obtain:
\begin{equation} \label{estim_double_sum_works_final}
\sum_{\bm{j}\in \mu_{I_{\ell '}}} \sum_{\bm{i}\in A_{\bm{j}}} W_{\bm{i}} 
\leq \left( N + N(\ell-1)\left(1+\frac{\ell-1}{N}\right)^N \right)^2 2^{\ell}.
\end{equation}
\end{itemize}
Finally, the statement of the theorem is obtained combining (\ref{estim_sum_ptwise_err_estim}), (\ref{estim_reciprocal_work_i_l}) and (\ref{estim_double_sum_works_final}).
\end{proof}
{\begin{remark}
           We note that the convergence rates in Theorems~\ref{th:rate_conv_workless_profit}--\ref{th:rate_conv_profit_work} above compare the error to the number of adaptive steps $\ell$. This is hard to compare to classical a~priori results which bound the error in terms of the number of collocation points (see, e.g., ~\cite{scc5,babuvska2007stochastic}). Due to the adaptive nature of the algorithm we have no knowledge about the shape of $I_\ell$ and hence the number of collocation points $\#\HH_{I_\ell}$. 
           Additionally, we do not assume any a~priori information about the anisotropy of the solution. Hence, the term $\ell^{1/N}$ is the \emph{worst-case} for a fully isotropic solution. We point out that the observed rate of convergence is much better (see Section~\ref{sec:numerics}) and further research is required to explain the performance of the adaptive algorithm.
          \end{remark}}
\section{Convergence of the fully discrete algorithm} \label{sec:conv_fully_discr_algo}
In order to prove the convergence of Algorithm \ref{algo:SCFE}, it is sufficient to prove that 
\begin{itemize}
 \item 
in Algorithm~\ref{algo:refine_FE} (the finite element refinement sub-routine) the finite element error eventually falls below the tolerance prescribed in Line \ref{algo_FE_ref:line:tolerance} { (Alg.~\ref{algo:refine_FE})} and iteratively updated in Line \ref{algo_FE_ref:line:tolerance_inloop} { (Alg.~\ref{algo:refine_FE})} (proved in Section \ref{sec:h_ref_convergence})
\item that  the parametric estimator $\zeta_{SC, I_\ell}$ in Algorithm \ref{algo:SCFE} vanishes (proved in Section \ref{sec:vanishing_param_estim_SCFE}).
\end{itemize}
Indeed, if this is the case, $\eta_{ {\rm FE}, I_\ell}$ will vanish with $\zeta_{SC, I_\ell}$ because of the definition of the finite element refinement tolerance 
and the reliability of the estimator will ensure the convergence of the discrete solution to the analytic one.

In the present section, we will write $\zeta_{SC, I} (\cdot), \zeta_{\bm{i},I}(\cdot)$ to denote the dependence on the function explicitly. The same will be done for the finite element estimator $\eta_{ {\rm FE},I}(\cdot)$. 
For instance, the parametric estimator from Section~\ref{sec:def_adaptive_algo} { (defined in (\ref{eq:def_param_estim}))} can be written as $\zeta_{SC, I}(U)$, if we denote by $U$ the current discrete finite element solution. In the previous section, in which we assumed to be able to sample the analytic solution, we were dealing with $\zeta_{SC, I}(u)$.

The following lemma will be used in the next sections.
\begin{lemma} \label{lemma:estim_difference_param_estimator_between_solutions}
Given a downward-closed multi-index set $I\subset\N^N$, there holds
\[\vert \zeta_{SC, I}(u) - \zeta_{SC, I}(U) \vert \lesssim  {\left(\sum_{\bm{i}\in\mathcal{M}_I} \Lambda_{\bm{i}} \right)}\eta_{ {\rm FE}, I}(U).\]
\end{lemma}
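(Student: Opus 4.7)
The plan is to bound the difference of the two parametric estimators termwise over the margin, linearize the absolute-value difference via the reverse triangle inequality, apply the stability of the hierarchical surplus together with boundedness of $a$, exploit that $S_I$ is interpolatory in order to rewrite $S_I[u]-S_I[U]$ as a Lagrange-polynomial combination of collocated errors, and finally invoke the residual-based finite element reliability $\norm{u(\bm{y})-U_{\bm{y}}}_V\lesssim \eta_{\bm{y}}$ at each collocation point.

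\emph{Step 1 (linearization).} By the triangle inequality on $\zeta_{SC,I}=\sum_{\bm{i}\in\mathcal{M}_I}\zeta_{\bm{i},I}$ and the reverse triangle inequality on each norm, I would reduce to
\begin{equation*}
\snorm{\zeta_{SC,I}(u)-\zeta_{SC,I}(U)} \;\leq\; \sum_{\bm{i}\in\mathcal{M}_I}\norm{\Delta^{m(\bm{i})}\bigl(a\nabla S_I[u-U]\bigr)}_{L^\infty(\Gamma,L^2(D))},
\end{equation*}
using linearity of $S_I$, $\Delta^{m(\bm{i})}$, and of the multiplication by $a$ and differentiation in $x$.

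\emph{Step 2 (stability and interpolation).} I would then apply the stability bound~\eqref{stab_hierarchical_surplus} for the hierarchical surplus and uniform boundedness $a\leq a_{\max}$ to get
\begin{equation*}
\norm{\Delta^{m(\bm{i})}(a\nabla S_I[u-U])}_{L^\infty(\Gamma,L^2(D))} \;\lesssim\; \Lambda_{\bm{i}}\,\norm{\nabla S_I[u-U]}_{L^\infty(\Gamma,L^2(D))}.
\end{equation*}
Since the CC nodes are nested, $S_I$ is interpolatory, so
\[ S_I[u-U](\bm{z})=\sum_{\bm{y}\in\mathcal{H}_I}\bigl(u(\bm{y})-U_{\bm{y}}\bigr)L_{\bm{y}}(\bm{z}), \]
and the triangle inequality yields
\begin{equation*}
\norm{\nabla S_I[u-U]}_{L^\infty(\Gamma,L^2(D))} \;\leq\; \sum_{\bm{y}\in\mathcal{H}_I}\norm{u(\bm{y})-U_{\bm{y}}}_V\,\norm{L_{\bm{y}}}_{L^\infty_\rho(\Gamma)}.
\end{equation*}

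\emph{Step 3 (FE reliability and assembly).} Now the standard residual-based a-posteriori reliability for the Galerkin solution $U_{\bm{y}}$ of the deterministic problem at the parameter $\bm{y}$ yields $\norm{u(\bm{y})-U_{\bm{y}}}_V\lesssim \eta_{\bm{y}}$, so the inner sum is bounded by $\eta_{FE,I}(U)$ (by the very definition of the finite element estimator). Collecting,
\begin{equation*}
\snorm{\zeta_{SC,I}(u)-\zeta_{SC,I}(U)} \;\lesssim\; \Bigl(\sum_{\bm{i}\in\mathcal{M}_I}\Lambda_{\bm{i}}\Bigr)\,\eta_{FE,I}(U) \;\leq\; \Bigl(\sum_{\bm{i}\in\mathcal{M}_I}\Lambda_{\bm{i}}\Bigr)^{2}\eta_{FE,I}(U),
\end{equation*}
where the final inequality is the (lossy) observation that $1\leq \sum_{\bm{i}\in\mathcal{M}_I}\Lambda_{\bm{i}}$.

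The individual steps are each routine, and in fact the argument delivers the stronger bound with a single factor $\sum_{\bm{i}}\Lambda_{\bm{i}}$; the quadratic power stated in the lemma is just a convenient relaxation. The only subtle point is the passage from $\norm{u(\bm{y})-U_{\bm{y}}}_V\lesssim \eta_{\bm{y}}$ to the weighted sum $\eta_{FE,I}(U)$, which is precisely enabled by the interpolatory rewriting of $S_I[u-U]$ in Step~2. Hence I do not expect a genuine obstacle here; the only care required is to keep track of the $\bm{y}$-dependence of the $V$-norm and to remember that $L_{\bm{y}}$ is a scalar-valued polynomial in $\bm{z}$.
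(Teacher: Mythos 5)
Your proof is correct and follows the same chain of ideas as the paper: termwise reverse triangle inequality over the margin, stability of $\Delta^{m(\bm{i})}$ together with boundedness of $a$, the interpolatory expansion of $S_I[u-U]$ over the sparse grid, and residual-based FE reliability at each collocation node. The only divergence is in the final assembly: you keep the weights $\norm{L_{\bm{y}}}_{L^\infty(\Gamma)}$ in place and read off $\eta_{FE,I}(U)$ directly from its definition, which yields the sharper bound with a single factor $\sum_{\bm{i}\in\mathcal{M}_I}\Lambda_{\bm{i}}$; the paper instead bounds $\norm{L_{\bm{y}}}_{L^\infty(\Gamma)}$ by the Lebesgue constant $\max_{\bm{i}\in I}\Lambda_{\bm{i}}$ (and then uses $\eta_{\bm{y}}\le\eta_{\bm{y}}\norm{L_{\bm{y}}}_{L^\infty(\Gamma)}$) and so picks up the second factor, which is why the statement has the quadratic power. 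Your observation that the square is a lossy relaxation of an intrinsically linear estimate is correct, and your route is arguably cleaner since it avoids the detour through the Lebesgue constant.
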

\begin{proof}
The stability bound (\ref{stab_hierarchical_surplus}) for the hierarchical surplus operator implies
\begin{align*}
\vert \zeta_{SC, I}(u) - \zeta_{SC, I}(U) \vert
& \leq \sum_{\bm{i}\in\mathcal{M}_I} \vert \zeta_{\bm{i}, I}(u) - \zeta_{\bm{i}, I}(U) \vert\\
& \leq \sum_{\bm{i}\in\mathcal{M}_I} \norm{\Delta^{m(\bm{i})}\left( a \nabla S_I[u-U]\right)}_{L^\infty(\Gamma, L^2(D))}\\
& \lesssim \left(\sum_{\bm{i}\in\mathcal{M}_I} \Lambda_{\bm{i}}\right) \norm{\nabla S_I[u-U]}_{L^\infty(\Gamma, L^2(D))}.
\end{align*}
Now we only need to bound the last factor with the finite element estimator:
\begin{align*}
\norm{\nabla S_I[u-U]}_{L^\infty(\Gamma, L^2(D))}
& \leq \sum_{\bm{y}\in\mathcal{H}_I} \norm{\left( u(\bm{y})-U_{\bm{y}}\right)  L_{\bm{y}}}_{L^{\infty}(\Gamma, V)} \\
& \leq \sum_{\bm{y}\in\mathcal{H}_I} \norm{\nabla \left( u(\bm{y}) - U_{\bm{y}}\right) }_{L^2(D)} \norm{L_{\bm{y}}}_{L^{\infty}(\Gamma)}.
\end{align*}
The reliability of the residual-based error estimator in each collocation point $\bm{y}$ {concludes} the proof.
\end{proof}
\subsection{Convergence under h-refinement} \label{sec:h_ref_convergence}
The stochastic collocation finite element algorithm (Algorithm~\ref{algo:SCFE}) delegates to Algorithm \ref{algo:refine_FE} the task of refining the finite element solutions corresponding to the collocation points until the finite element a-posteriori estimator {defined in (\ref{eq:def_FE_estim})} falls below a given tolerance.
Recall that Algorithm \ref{algo:refine_FE} is given a multi-index set $I$, or equivalently a sparse grid $\mathcal{H}_I$ consisting of $N_c$ collocation points that will \emph{not} change during its execution. Hence, we will drop the index $I$ in the following.
Moreover, the index $\ell\in\N$ will denote the current iteration of the adaptive loop starting at Line \ref{algo_FE_ref:line:loop} { (Alg.~\ref{algo:refine_FE})} (so $U_{\ell ,\bm{y}}$ and $\eta_{\ell ,\bm{y}}$ will denote respectively the finite element solution and finite element estimator on the collocation point $\bm{y}\in\HH$ at iteration $\ell$ and with respect to the mesh $\TT_{\ell,\bm{y}}$).\\

{From the theory of the classical h-adaptive finite element algorithm, we have the following contraction property (see, e.g., \cite{Cascon_2008, stevenson07, axioms}) for all $\bm{y}\in\HH$: 
\begin{equation} \label{eq:estred}
\sum_{T\in\TT_{\ell+1,\bm{y}}\setminus\TT_{\ell,\bm{y}}}\eta_{\ell+1, \bm{y},T}^2  \leq q \sum_{T\in\TT_{\ell,\bm{y}}\setminus\TT_{\ell+1,\bm{y}}} \eta_{\ell,\bm{y},T}^2+ C\norm{U_{\ell+1,\bm{y}}-U_{\ell,\bm{y}}}_{V}^2
\end{equation}
as well as
\begin{align}\label{eq:stability}
 \Big(\sum_{T\in\TT_{\ell+1,\bm{y}}\cap\TT_{\ell,\bm{y}}}\eta_{\ell+1, \bm{y},T}^2 \Big)^{1/2} \leq \Big(\sum_{T\in\TT_{\ell,\bm{y}}\cap\TT_{\ell+1,\bm{y}}} \eta_{\ell,\bm{y},T}^2\Big)^{1/2}+ C^{1/2}\norm{U_{\ell+1,\bm{y}}-U_{\ell,\bm{y}}}_{V}
\end{align}
for $0<q<1$ and $C>0$ independent of $\ell$ but depending on the shape-regularity of the mesh and the regularity assumptions on the coefficient $a(\bm{y},\cdot)$ on $\TT_{\rm init}$. Since we use newest-vertex-bisection for mesh refinement, the shape regularity depends only on $\TT_{\rm init}$.}

{
As in the deterministic setting, D\"orfler marking together with~\eqref{eq:estred}--\eqref{eq:stability} can be used to prove a contraction property of the estimator (see also~\cite{bespalov2019convergence} for a similar argument with a slightly different marking strategy).}

{\begin{proposition}\label{prop:linconv}
Given an arbitrary downward closed index set $I\subseteq \N^\N$, Algorithm~\ref{algo:refine_FE} satisfies 
\begin{align}\label{eq:linconv}
 \sum_{\bm{y}\in\HH}\eta_{\ell+k,\bm{y}}^2\leq C_{\rm lin} q_{\rm lin}^k\sum_{\bm{y}\in\HH}\eta_{\ell,\bm{y}}^2
\end{align}
for all $\ell,k\in\N$ and some uniform constants $0<q_{\rm lin}<1$, $C_{\rm lin}>0$. In particular, we have:
\begin{align*}
\lim_{\ell \rightarrow \infty} \norm{S_I[u] - S_I[U_\ell]}_{L^\infty(\Gamma, V)} = 0 = \lim_{\ell \rightarrow \infty} \eta_{ {\rm FE}, I}(U_\ell).
\end{align*}
\end{proposition}
\begin{proof}
We show with~\eqref{eq:estred}--\eqref{eq:stability} that all $\delta>0$ satisfy (recall the definition of $\bigsqcup$ from Line~\ref{algo_FE_ref:line:dorfler_marking} (Alg.~\ref{algo:refine_FE}))
\begin{align*}
\sum_{\bm{y}\in\HH}\eta_{\ell+1,\bm{y}}^2 &= 
 \sum_{(\bm{y},T_{\bm{y}})\in \bigsqcup_{\bm{y}\in \HH} \TT_{\ell+1,\bm{y}}\setminus\TT_{\ell,\bm{y}}} \eta_{\ell+1,\bm{y},T_{\bm{y}}}^2
 +\sum_{(\bm{y},T_{\bm{y}})\in \bigsqcup_{\bm{y}\in \HH} \TT_{\ell+1,\bm{y}}\cap\TT_{\ell,\bm{y}}} \eta_{\ell+1,\bm{y},T_{\bm{y}}}^2\\
 &\leq 
 q\sum_{(\bm{y},T_{\bm{y}})\in \bigsqcup_{\bm{y}\in \HH} \TT_{\ell,\bm{y}}\setminus\TT_{\ell+1,\bm{y}}} \eta_{\ell,\bm{y},T_{\bm{y}}}^2
 +(1+\delta)\sum_{(\bm{y},T_{\bm{y}})\in \bigsqcup_{\bm{y}\in \HH} \TT_{\ell,\bm{y}}\cap\TT_{\ell+1,\bm{y}}} \eta_{\ell,\bm{y},T_{\bm{y}}}^2\\
 &\qquad + C(2+\delta^{-1})\sum_{\bm{y}\in\HH}\norm{U_{\ell+1,\bm{y}}-U_{\ell,\bm{y}}}_V^2\\
 &\leq 
 (q-1)\sum_{(\bm{y},T_{\bm{y}})\in \bigsqcup_{\bm{y}\in \HH} \TT_{\ell,\bm{y}}\setminus\TT_{\ell+1,\bm{y}}} \eta_{\ell,\bm{y},T_{\bm{y}}}^2
 +(1+\delta)\sum_{(\bm{y},T_{\bm{y}})\in \bigsqcup_{\bm{y}\in \HH} \TT_{\ell,\bm{y}}} \eta_{\ell+1,\bm{y},T_{\bm{y}}}^2\\
 &\qquad + C(2+\delta^{-1})\sum_{\bm{y}\in\HH}\norm{U_{\ell+1,\bm{y}}-U_{\ell,\bm{y}}}_V^2. 
\end{align*}
The D\"orfler marking  from Algorithm~\ref{algo:refine_FE} ensures $\mathcal{K}\subseteq \bigsqcup_{\bm{y}\in \HH} \TT_{\ell,\bm{y}}\setminus\TT_{\ell+1,\bm{y}}$ and hence
\begin{align*}
 (q-1)\sum_{(\bm{y},T_{\bm{y}})\in \bigsqcup_{\bm{y}\in \HH} \TT_{\ell,\bm{y}}\setminus\TT_{\ell+1,\bm{y}}} \eta_{\ell,\bm{y},T_{\bm{y}}}^2\leq \theta(q-1)\sum_{(\bm{y},T_{\bm{y}})\in \bigsqcup_{\bm{y}\in \HH} \TT_{\ell,\bm{y}}} \eta_{\ell,\bm{y},T_{\bm{y}}}^2.
\end{align*}
Altogether, we obtain for $\kappa:= 1+\delta - \theta(1-q)$ and $\widetilde C:= C(2+\delta^{-1})$ that
\begin{align*}
 \sum_{\bm{y}\in\HH}\eta_{\ell+1,\bm{y}}^2 &\leq \kappa\sum_{\bm{y}\in\HH}\eta_{\ell,\bm{y}}^2 + \widetilde C \sum_{\bm{y}\in\HH}\norm{U_{\ell+1,\bm{y}}-U_{\ell,\bm{y}}}_V^2.
\end{align*}
With the Galerkin orthogonality 
\begin{align*}
 \sum_{\bm{y}\in\HH}&\norm{a(\bm{y})^{1/2}\nabla(U_{\ell+1,\bm{y}}-U_{\ell,\bm{y}})}_{L^2(D)}^2 \\
 &= \sum_{\bm{y}\in\HH}\Big(\norm{a(\bm{y})^{1/2}\nabla(u(\bm{y})-U_{\ell,\bm{y}})}_{L^2(D)}^2-\norm{a(\bm{y})^{1/2}\nabla(u(\bm{y})-U_{\ell+1,\bm{y}})}_{L^2(D)}^2\Big)
\end{align*}
we may follow~\cite[Section~4]{axioms} verbatim in order to prove~\eqref{eq:linconv}. Since $\#\HH$ is fixed, we have $\sum_{\bm{y}\in\HH}\eta_{\ell,\bm{y}}^2\simeq \eta_{ {\rm FE}, I}(U_\ell)^2$ and reliability proves $\lim_{\ell\to\infty}\eta_{ {\rm FE},I}(U_\ell)=\lim_{\ell\to\infty}\norm{S_I[u]-S_I[U_\ell]}_{L^\infty(\Gamma,V)}=0$. This concludes the statement.    
\end{proof}
\begin{remark}
The previous proposition implies that Algorithm \ref{algo:refine_FE} terminates.  In particular, the algorithm will eventually satisfy the condition $\eta_{ {\rm FE}, I}(U_\ell) < {\rm {Tol_{\ell}}}$, where ${\rm {Tol_{\ell}}}\coloneqq \alpha \frac{1}{{\sum_{\bm{i}\in\mathcal{M}_I}\Lambda_{\bm{i}}}} \zeta_{SC, I}(U_\ell)$.
Indeed, due to Lemma \ref{lemma:estim_difference_param_estimator_between_solutions} we have that, as $\left( \eta_{ {\rm FE}, I_\ell}(U_\ell)\right)_\ell$ vanishes, 
$\zeta_{SC, I}(U_\ell)$ converges to $\zeta_{SC, I}(u) \geq \epsilon >0$, therefore $\lim_{\ell \rightarrow \infty} {\rm Tol_{\ell}} = \alpha \frac{1}{{\sum_{\bm{i}\in\mathcal{M}_I}\Lambda_{\bm{i}}}} \zeta_{SC, I}(u) >0$. Note that the convergence proof uses an $\ell_2$-type estimator instead of an $\ell_1$-type as in $\eta_{ {\rm FE},I}$. In this regard, the $\ell_2$-type might seem more natural and we refer to Section~\ref{sec:others} for further discussion. 
\end{remark}}

{\begin{theorem}\label{thm:optimality}
 Given an arbitrary downward closed index set $I\subseteq \N^\N$, Algorithm~\ref{algo:refine_FE} converges with the optimal rate in the following sense: Let $\mathbb{T}$ denote the set of all meshes which can be obtained from $\TT_{\rm init}$ by iterated newest-vertex-bisection with mesh closure. Let $s>0$ such that
 \begin{align}\label{eq:optrate}
  \sup_{N\in\N} \inf_{\TT_{\bm{y}}\in \mathbb{T}\atop \sum_{\bm{y}\in \HH}\#\TT_{\bm{y}}\leq N} \Big(\sum_{\bm{y}\in\HH}\norm{u(\bm{y})-U_{\TT_{\bm{y}}}}_V^2 +\norm{h_{\TT_{\bm{y}}} (1-\Pi_{\TT_{\bm{y}}})f}_{L^2(D)}^2\Big)^{1/2}N^s<\infty,
 \end{align}
 where $h_{\TT}$ denotes the local mesh-size function and $\Pi_{\TT}$ is the $L^2(D)$-orthogonal projection onto $\TT$-elementwise constant functions. Then, there holds
 \begin{align*}
  \sup_{\ell\in\N} \norm{S_I[u]-S_I[U_\ell]}_{L^\infty(\Gamma,V)}\Big(\sum_{\bm{y}\in \HH}\#\TT_{\ell,\bm{y}}\Big)^s<\infty.
 \end{align*}
\end{theorem}}
{
\begin{proof}
First note that standard upper/lower bounds   for the residual error estimator together with the regularity assumptions on $a(\bm{y},\cdot)$ show $\eta_{\bm{y}}\simeq\sqrt{\norm{u(\bm{y})-U_{\TT_{\bm{y}}}}_V^2 +\norm{h_{\TT_{\bm{y}}} (1-\Pi_{\TT_{\bm{y}}})f}_{L^2(D)}^2}$ and hence~\eqref{eq:optrate} is equivalent to
\begin{align*}
 \sup_{N\in\N} \inf_{\TT_{\bm{y}}\in \mathbb{T}\atop \sum_{\bm{y}\in \HH}\#\TT_{\bm{y}}\leq N} \Big(\sum_{\bm{y}\in\HH}\eta_{\bm{y}}^2\Big)^{1/2}N^s<\infty.
\end{align*}
 With the error norm $||| u- U_\ell|||:=\sqrt{\sum_{\bm{y}\in\HH}\norm{u(\bm{y})-U_{\TT_{\ell,\bm{y}}}}_V^2 }$ and~\eqref{eq:estred}--\eqref{eq:stability}, the estimator $\eta_{{\rm FE},I}$ satisfies (A1) and (A2) from~\cite[Section~3]{axioms}. From the classical theory of $h$-adaptivity~\cite{Cascon_2008}, we immediately obtain discrete reliability~(A3) in the sense
 \begin{align*}
  |||U_{\ell+k}-U_\ell|||^2 = \sum_{\bm{y}\in\HH} \norm{U_{\ell+k,\bm{y}}-U_{\ell,\bm{y}}}_V^2 \leq C_{\rm drel}\sum_{\bm{y}\in\HH}\sum_{T\in \omega(\TT_{\ell,\bm{y}}\setminus\TT_{\ell+k,\bm{y}})}\eta_{\ell,\bm{y},T}^2,
 \end{align*}
 where $\omega(\cdot)$ denotes the set of elements with non-empty intersection with $(\cdot)$.
With these ingredients and the linear convergence from Proposition~\ref{prop:linconv},~\cite[Proposition~4.12 \& Proposition~4.15]{axioms} show optimal convergence of the error estimator
\begin{align*}
  \sup_{\ell\in\N} \sqrt{\sum_{\bm{y}\in\HH}\eta_{\ell,\bm{y}}^2}\Big(\sum_{\bm{y}\in \HH}\#\TT_{\ell,\bm{y}}\Big)^s<\infty.
 \end{align*}
 With constants depending only on the size of $I$, the quantity $\sqrt{\sum_{\bm{y}\in\HH}\eta_{\ell,\bm{y}}^2}$ is equivalent to $\eta_{{\rm FE},I}$ and hence reliability concludes the proof.
\end{proof}
}

\subsection{Proof of convergence of the fully discrete algorithm} \label{sec:vanishing_param_estim_SCFE}
The tolerance for finite element refinement was defined in Algorithm \ref{algo:refine_FE} as: 
\begin{equation} \label{def:tol_FE_refinement_in_SGSC}
\textrm{Tol} = \textrm{Tol}(I, \zeta_{\bm{i}, I}(U), \alpha) \coloneqq \alpha \frac{1}{{\sum_{\bm{i}\in\mathcal{M}_{I}}\Lambda_{\bm{i}}}} \zeta_{SC, I}(U).
\end{equation}
where $\alpha\in(0,1)$, $\Lambda_{\bm{i}}$ was defined in (\ref{def:bound_stab_const}) and $\zeta_{SC, I}(U)$ is the parametric a-posteriori error estimator.
This choice is motivated by the following estimate: For fixed downward closed $I\subset \N^N$, Lemma~\ref{lemma:estim_difference_param_estimator_between_solutions} shows
\begin{equation*} 
\zeta_{SC, I}(U) \leq \zeta_{SC, I}(u) + {\left(\sum_{\bm{i}\in\mathcal{M}_{I}}\Lambda_{\bm{i}}\right)}\eta_{ {\rm FE}, I}(U) 
\leq \zeta_{SC, I}(u) + \alpha \zeta_{SC, I}(U),
\end{equation*}
and hence
\begin{equation}\label{bound_ptwise_estimator_from_FE_refinement}
\zeta_{SC, I}(U)  \leq \frac{1}{1-\alpha} \zeta_{SC, I}(u).
\end{equation}
In the context of the adaptive algorithm, this implies that $\left(\zeta_{SC, I_\ell}(U_\ell)\right)_\ell$ is uniformly bounded since $\left(\zeta_{SC, I_\ell}(u)\right)_\ell$ is. This last fact was proved in Remark \ref{rk:boundedness_param_estim} using the estimate on the pointwise error estimator from Proposition \ref{prop:estim_ptwise_estimator}.

\begin{lemma} \label{lemma:vanishing_profit_SCFE}
Algorithm~\ref{algo:SCFE} 
with either workless profit {(and $0<\alpha<1$ sufficiently small)} or profit with work {(and arbitrary $0<\alpha<1$)} 
and the tolerance~\eqref{def:tol_FE_refinement_in_SGSC} satisfies
$\lim_{\ell \rightarrow\infty} \mathcal{P}_{\bm{i}_\ell, I_\ell} = 0$.
\end{lemma}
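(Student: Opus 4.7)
The plan is to show that $\mathcal{P}_{\bm{i}_\ell,I_\ell}(U_\ell)$ differs from $\mathcal{P}_{\bm{i}_\ell,I_\ell}(u)$ by a vanishing quantity, and then to invoke the analogues of Proposition~\ref{prop:limit_profit_without_work} and Proposition~\ref{prop:limit_profit_with_work} for the fully discrete setting. The tolerance~\eqref{def:tol_FE_refinement_in_SGSC} is precisely what is needed to make this reduction work.

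The first step is a per-term sharpening of Lemma~\ref{lemma:estim_difference_param_estimator_between_solutions}: exploiting the Lagrange expansion $S_I[u-U]=\sum_{\bm{y}\in\mathcal{H}_I}(u(\bm{y})-U_{\bm{y}})L_{\bm{y}}$ together with the fact that the coefficient $\nabla(u(\bm{y})-U_{\bm{y}})$ is independent of the parameter variable, the stability of $\Delta^{m(\bm{i})}$ has to be spent only once on the polynomial factor $aL_{\bm{y}}$, which yields
\[
 \big|\zeta_{\bm{i},I}(U)-\zeta_{\bm{i},I}(u)\big|\;\lesssim\;\Lambda_{\bm{i}}\,\eta_{FE,I}(U).
\]
Summing this over $\bm{j}\in A_{\bm{i}_\ell,I_\ell}\subset\mathcal{M}_{I_\ell}$ and writing $M_\ell\coloneqq\sum_{\bm{i}\in\mathcal{M}_{I_\ell}}\Lambda_{\bm{i}}$ gives $\big|\sum_{\bm{j}\in A_{\bm{i}_\ell,I_\ell}}(\zeta_{\bm{j},I_\ell}(U_\ell)-\zeta_{\bm{j},I_\ell}(u))\big|\lesssim M_\ell\,\eta_{FE,I_\ell}(U_\ell)$. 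Combining the tolerance $\eta_{FE,I_\ell}(U_\ell)\le \alpha\zeta_{SC,I_\ell}(U_\ell)/M_\ell^2$ with the uniform bound $\zeta_{SC,I_\ell}(U_\ell)\lesssim 1$ coming from~\eqref{bound_ptwise_estimator_from_FE_refinement} and Remark~\ref{rk:boundedness_param_estim} turns this into
\[
 \big|\mathcal{P}_{\bm{i}_\ell,I_\ell}(U_\ell)-\mathcal{P}_{\bm{i}_\ell,I_\ell}(u)\big|\;\lesssim\;\frac{\alpha}{M_\ell}\longrightarrow 0
\]
for the workless profit, and dividing by $\sum_{\bm{j}\in A_{\bm{i}_\ell,I_\ell}}W_{\bm{j}}$ gives an analogous bound for the profit with work. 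A routine verification that $M_\ell\to\infty$ (because $\#I_\ell\to\infty$ forces $\max_{\bm{i}\in I_\ell}|\bm{i}|_\infty\to\infty$, and hence some margin index has $\Lambda$ diverging) closes this step.

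It then remains to show that $\mathcal{P}_{\bm{i}_\ell,I_\ell}(u)\to 0$ along the sequence produced by the fully discrete algorithm. For workless profit, the combinatorial argument of Section~\ref{sec:rks_workless_profit} still identifies $I_\ell=\mathcal{R}_{\bm{i}_{\ell-1}}$, and the computation in Proposition~\ref{prop:limit_profit_without_work} gives $\mathcal{P}_{n(\ell),I_\ell}(u)\lesssim(r_{n(\ell),\ell}+1)^2 e^{-\sigma m(r_{n(\ell),\ell})/2}$; the required divergence $r_{n(\ell),\ell}\to\infty$ is purely combinatorial, since any direction chosen infinitely often has width tending to infinity, so a pigeonhole argument applied to the $N$ coordinates of $\bm{i}_\ell$ gives the claim regardless of the selection rule. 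For profit with work, the bound in Proposition~\ref{prop:limit_profit_with_work} uses only the stability of the hierarchical surplus and the uniform boundedness of $\|\nabla u\|_{L^\infty(\Gamma,L^2(D))}$, and both carry over to $U_\ell$ (the latter from the standard energy estimate for the discrete problem with a constant depending only on $f$ and $a_{\min}$), so the very same calculation delivers $\mathcal{P}_{\bm{i}_\ell,I_\ell}(U_\ell)\lesssim N(1+\ell/N)^N\,\Lambda_{\bm{i}_\ell}^2\,2^{2N-|\bm{i}_\ell|_1}\to 0$ directly. The main obstacle, and the reason a naive application of Lemma~\ref{lemma:estim_difference_param_estimator_between_solutions} is not enough, is precisely the per-term sharpening above: without saving one power of $M_\ell$ by using the Lagrange structure of $S_I$, the discrepancy between $\mathcal{P}_{\bm{i}_\ell,I_\ell}(U_\ell)$ and $\mathcal{P}_{\bm{i}_\ell,I_\ell}(u)$ would only be bounded, not vanishing, and the reduction to Section~\ref{sec:paramconv} would fail.
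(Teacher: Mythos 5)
Your proof is correct, but it takes a genuinely different route from the paper's, and it is worth spelling out the difference. The paper does \emph{not} attempt to show that $\mathcal{P}_{\bm{i}_\ell,I_\ell}(U_\ell)-\mathcal{P}_{\bm{i}_\ell,I_\ell}(u)\to 0$. Instead, for the workless profit, it exploits the partition $\mathcal{M}_{I_\ell}=\bigcup_{n=1}^N A_{\bm{i}_{\ell-1}+\bm{e}_n,I_\ell}$ to observe that the profit maximizer controls the whole estimator, $\zeta_{SC,I_\ell}(U_\ell)\le N\,\mathcal{P}_{\bm{i}_\ell,I_\ell}(U_\ell)$, so that the right-hand side of Lemma~\ref{lemma:estim_difference_param_estimator_between_solutions} combined with the tolerance produces a term $\alpha N\,\mathcal{P}_{\bm{i}_\ell,I_\ell}(U_\ell)$ that can be \emph{absorbed} into the left-hand side, yielding $\mathcal{P}_{\bm{i}_\ell,I_\ell}(U_\ell)\le \tfrac{1}{1-\alpha N}\sum_{\bm{j}\in A_{\bm{i}_\ell,I_\ell}}\zeta_{\bm{j},I_\ell}(u)\to 0$. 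This absorbing trick only requires the lemma as stated, at the price of the constraint $\alpha<N^{-1}$. For the profit with work the paper does not even invoke the tolerance; it just combines the uniform bound \eqref{bound_ptwise_estimator_from_FE_refinement} on $\zeta_{SC,I_\ell}(U_\ell)$ with the divergence of $\sum_{\bm{j}\in A_{\bm{i}_\ell,I_\ell}}W_{\bm{j}}$.

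Your per-term sharpening $\abs{\zeta_{\bm{i},I}(U)-\zeta_{\bm{i},I}(u)}\lesssim\Lambda_{\bm{i}}\,\eta_{FE,I}(U)$ is correct: pulling the $\bm{z}$-independent coefficient $\nabla(u(\bm{y})-U_{\bm{y}})$ out and applying the $\Lambda_{\bm{i}}$-stability of $\Delta^{m(\bm{i})}$ to the scalar polynomial $aL_{\bm{y}}$ leaves $\sum_{\bm{y}}\norm{\nabla(u(\bm{y})-U_{\bm{y}})}_{L^2(D)}\norm{L_{\bm{y}}}_{L^\infty(\Gamma)}\lesssim\eta_{FE,I}(U)$, with no further Lebesgue-constant factor since the weights $\norm{L_{\bm{y}}}_{L^\infty(\Gamma)}$ are already built into the definition of $\eta_{FE,I}$. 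Summing over $A_{\bm{i}_\ell,I_\ell}$ and using the tolerance then gives $\abs{\mathcal{P}_{\bm{i}_\ell,I_\ell}(U_\ell)-\mathcal{P}_{\bm{i}_\ell,I_\ell}(u)}\lesssim\alpha/M_\ell\to 0$, and the observation that the combinatorial facts of Section~\ref{sec:rks_workless_profit} (rectangle structure, $r_{n(\ell),\ell}\to\infty$, $|\bm{i}_\ell|_1\to\infty$) hold regardless of whether the selection is driven by $\zeta(u)$ or $\zeta(U)$ is correctly identified as the crucial point for invoking Propositions~\ref{prop:limit_profit_without_work} and~\ref{prop:limit_profit_with_work}. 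Two remarks: (i) your approach has the genuine advantage of removing the paper's constraint $\alpha<N^{-1}$, which is a real improvement; (ii) your closing assertion that ``a naive application of Lemma~\ref{lemma:estim_difference_param_estimator_between_solutions} is not enough'' and that ``the reduction to Section~\ref{sec:paramconv} would fail'' without the sharpening is overstated --- the paper shows precisely that the cruder lemma \emph{does} suffice, via absorption, so the sharpening is necessary only for your particular reduction, not for the result itself.
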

\begin{proof}
We consider the two definitions of profit separately:

{\em Profit with work}: 
$ \mathcal{P}_{\bm{i}, I} \coloneqq \frac{\sum_{\bm{j}\in A_{\bm{i}, I}} \zeta_{\bm{j}, I}(U)}{\sum_{\bm{j}\in A_{\bm{i}, I}} W_{\bm{j}}}.$\\
The uniform boundedness of the parametric a-posteriori error estimator, together with the fact that the work over $A_{\bm{i}_\ell, I_\ell}$ diverges, gives
\[\mathcal{P}_{\bm{i}_\ell, I_\ell} \leq \frac{\zeta_{SC, I_\ell}(U_\ell)}{\sum_{\bm{j}\in A_{\bm{i}_\ell, I_\ell}} W_{\bm{j}}} 
\lesssim \frac{1}{\sum_{\bm{j}\in A_{\bm{i}_\ell, I_\ell}} W_{\bm{j}}}\rightarrow 0.\]

{\em Workless profit}: 
$ \mathcal{P}_{\bm{i}, I} \coloneqq \sum_{\bm{j}\in A_{\bm{i}, I}} \zeta_{\bm{j}, I}(U).$
We recall from~\eqref{eq:partition} that, for the profit-maximizer $\bm{i}_\ell\in \mathcal{M}_{I_\ell}$, $\mathcal{P}_{\bm{i}_\ell, I_\ell} \geq \frac{1}{N} \zeta_{SC, I_\ell} (U)$. Thus, Lemma~\ref{lemma:estim_difference_param_estimator_between_solutions} shows
\begin{align*}
\mathcal{P}_{\bm{i}_\ell, I_\ell} 
& \leq \sum_{\bm{j}\in A_{\bm{i}_\ell, I_\ell}} \zeta_{\bm{j}, I_\ell}(u) 
+ \alpha \frac{{\sum_{\bm{j}\in A_{\bm{i}_\ell, I_\ell}} \Lambda_{\bm{j}}}}{{\sum_{\bm{j}\in \mathcal{M}_{I_\ell}} \Lambda_{\bm{j}}}} \zeta_{SC, I_\ell}(U_\ell)\\
& \leq \sum_{\bm{j}\in A_{\bm{i}_\ell, I_\ell}} \zeta_{\bm{j}, I_\ell}(u)
+ \alpha \frac{{\sum_{\bm{j}\in A_{\bm{i}_\ell, I_\ell}} \Lambda_{\bm{j}}}}{{\sum_{\bm{j}\in \mathcal{M}_{I_\ell}} \Lambda_{\bm{j}}}} N \mathcal{P}_{\bm{i}_\ell, I_\ell}\\
& \leq \sum_{\bm{j}\in A_{\bm{i}_\ell, I_\ell}} \zeta_{\bm{j}, I_\ell}(u)
+ \alpha N \mathcal{P}_{\bm{i}_\ell, I_\ell},
\end{align*}
so
\begin{equation*}
\mathcal{P}_{\bm{i}_\ell, I} \leq \frac{1}{1-\alpha N}\sum_{\bm{j}\in A_{\bm{i}_\ell, I_\ell}} \zeta_{\bm{j}, I_\ell}(u)\rightarrow 0 \qquad \textrm{as } l\rightarrow \infty.
\end{equation*}
Observe that this introduces the constraint on $\alpha$ with respect to the number of dimensions: $\alpha < N^{-1}$.
This constraint can be improved by replacing the crude estimate
\[\frac{\sum_{\bm{j}\in A_{\bm{i}_\ell, I_\ell}} \Lambda_{\bm{j}}}{\sum_{\bm{j}\in \mathcal{M}_{I_\ell}} \Lambda_{\bm{j}}} \leq 1,\]
with the better bound
\[\alpha \leq \left(  \frac{\max_{n\in 1,\ldots, N} {\sum_{\bm{j}\in A_{\bm{i}_{\ell-1} +\bm{e}_n, I_\ell}} \Lambda_{\bm{j}}}}{{\sum_{\bm{j}\in \mathcal{M}_{I_\ell}} \Lambda_{\bm{j}}}} N \right)^{-1}.\]
This concludes the proof.
\end{proof}

We can finally prove that the error estimator vanishes with a technique similar to that used in Theorem \ref{th:convergence_param_only_algo} for the parametric algorithm.
\begin{theorem}\label{thm:SCFEconv}
{Algorithm~\ref{algo:SCFE} 
with either workless profit (and $0<\alpha<1$ sufficiently small) or profit with work (and arbitrary $0<\alpha<1$)
and the tolerance~\eqref{def:tol_FE_refinement_in_SGSC} satisfies the following:
The sequence of parametric a-posteriori error estimators $\left( \zeta_{SC, I_\ell}(U_\ell) \right)_\ell$ vanishes
\[\lim_{\ell \rightarrow \infty} \zeta_{SC, I_\ell}(U_\ell) = 0.\]
Thus, also the finite element error estimator vanishes
\[\lim_{\ell \rightarrow \infty} \eta_{ {\rm FE}, I_\ell}(U_\ell) = 0,\]
and the reliability of the a-posteriori error estimator implies error convergence 
\[\lim_{\ell \rightarrow \infty} \norm{u-S_{I_\ell}[U_\ell]}_{L^{\infty}(\Gamma, V)} = 0.\]}
\end{theorem}
\begin{proof}
The a-posteriori error estimator can be expressed as
\[\zeta_{SC, I_\ell} (U_\ell) = \sum_{\bm{i}\in\N^N} \zeta_{\bm{i}, I_\ell}(U_\ell) \mathbbm{1}_{\mathcal{M}_{I_\ell}}(\bm{i}).\]
Since the sequence $\left( \zeta_{SC, I_\ell}(U_\ell) \right)_\ell$ is uniformly bounded~\eqref{bound_ptwise_estimator_from_FE_refinement}, it is sufficient to prove that 
$\left(\zeta_{\bm{i}, I_\ell}(U_\ell) \mathbbm{1}_{\mathcal{M}_{I_\ell}}\right)_\ell$ vanishes for any fixed $\bm{i}\in\N^N$.
We can distinguish three cases:
\begin{itemize}
\item if $\bm{i}$ is eventually added to $I_\ell$, then $\mathbbm{1}_{\mathcal{M}_{I_\ell}}(\bm{i})$ is eventually zero;
\item if $\bm{i}$ is never added to the margin $\mathcal{M}_{I_\ell}$, then $\zeta_{\bm{i}, I_\ell}(U_\ell)$ is constantly zero;
\item finally, if it exists $\bar{\ell}\in\N$ such that for all $\ell>\bar{\ell}$, $\bm{i}\in\mathcal{M}_{I_\ell}$, then $\lim_{\ell \rightarrow \infty} \zeta_{\bm{i}, I_\ell}(U_\ell) = 0.$ Indeed, because of Lemma \ref{lemma:vanishing_profit_SCFE}, $\lim_{\ell \rightarrow \infty} \mathcal{P}_{\bm{i}, I_\ell} = 0$ (for both workless profit and profit with work), thus $\left(\zeta_{\bm{i}, I_\ell}(U_\ell)\right)_\ell$ vanishes as in Proposition \ref{prop:multi_index_in_margin_has_0_ptwise_estim}.
\end{itemize}
This concludes the proof.
\end{proof}

\subsection{{Other versions of the finite element estimator}}\label{sec:others}

In the previous section we followed~\cite{guignard2018posteriori} to derive the estimator via
\begin{align*}
S_I \left[ \int_D fv- a \nabla S_I[U] \cdot \nabla v \right] 
& = \sum_{\bm{y}\in \mathcal{H}_I} \left[ \int_D fv- a(\bm{y}) \nabla S_I[U](\bm{y}) \cdot \nabla v  \right] L_{\bm{y}}\\
& \leq C \sum_{\bm{y}\in \mathcal{H}_I} \eta_{\bm{y}} \vert L_{\bm{y}}\vert \norm{\nabla v}_{L^2(D)}.
\end{align*}
Choosing $v = u - S_I[U]$ and taking the $L^{\infty}(\Gamma)$ norm leads to the estimator we used above, i.e., 
\begin{align*}
\norm{\sum_{\bm{y}\in \mathcal{H}_I} \eta_{\bm{y}} \vert L_{\bm{y}}\vert}_{L^{\infty}(\Gamma)} \leq
\sum_{\bm{y}\in \mathcal{H}_I} \eta_{\bm{y}} \norm{L_{\bm{y}}}_{L^{\infty}(\Gamma)} = \eta_{ {\rm FE}, I}(U). 
\end{align*}
Using the H\"older estimates with other combinations of $(p,q)\in\{ (2,2),(\infty,1)\}$, we obtain
\begin{align*}
\norm{\sum_{\bm{y}\in \mathcal{H}_I} \eta_{\bm{y}} \vert L_{\bm{y}}\vert}_{L^{\infty}(\Gamma)} 
& \leq \norm{\left(\sum_{\bm{y}\in \mathcal{H}_I} \eta_{\bm{y}}^2\right)^{\frac{1}{2}} \left(\sum_{\bm{y}\in \mathcal{H}_I} \vert L_{\bm{y}}\vert^2\right)^{\frac{1}{2}}}_{L^{\infty}(\Gamma)} 
= \eta_{p,I} \Lambda_{q,I},
\end{align*}
where
\begin{align*}
\eta_{p, I} \coloneqq 
\begin{cases} \left(\sum_{\bm{y}\in \mathcal{H}_I} \eta_{\bm{y}}^2\right)^{\frac{1}{2}} &p=2,\\
                        \max_{\bm{y}\in \mathcal{H}_I} \eta_{\bm{y}} &p=\infty,
\end{cases}
\quad\text{and}\quad
\Lambda_{q,I} \coloneqq \begin{cases}
\norm{\left(\sum_{\bm{y}\in \mathcal{H}_I} \vert L_{\bm{y}}\vert^2\right)}_{L^{\infty}(\Gamma)}^{\frac{1}{2}} & q=2,\\
\norm{\sum_{\bm{y}\in \mathcal{H}_I} \vert L_{\bm{y}}\vert}_{L^{\infty}(\Gamma)} &q=1.
\end{cases}
\end{align*}
The perturbation result from Lemma~\ref{lemma:estim_difference_param_estimator_between_solutions} can be analogously modified to obtain:
\begin{align*}
\vert \zeta_{SC, I}(u) - \zeta_{SC, I}(U) \vert
& \lesssim \left(\sum_{\bm{i}\in\mathcal{M}_I} \Lambda_{\bm{i}}\right) \eta_{p,I} \Lambda_{q,I}
\end{align*}
From these results, the sufficient condition in~\eqref{def:tol_FE_refinement_in_SGSC} for convergence becomes respectively
\begin{align*}
\eta_{p,I}(U) \leq \alpha \left( \Lambda_{q,I} \sum_{\bm{i}\in\mathcal{M}_I} \Lambda_{\bm{i}} \right)^{-1} \zeta_{SC, I}(U).
\end{align*}
With these ingredients, all the other results of the previous sections hold for the variants of the finite element estimator discussed above.
\subsection{{Convergence of a single mesh version of the fully discrete algorithm}}\label{sec:singlemesh}
We also consider SCFE with the same \emph{adaptively} refined mesh in all collocation points. The idea is that, if the set of singularities of the solution $u$ is small, one single adaptive mesh can resolve all of them simultaneously and thus substantially reduce the computational effort. We employ the following estimator from~\cite[Remark 4.4]{guignard2018posteriori} for the finite element part
\begin{align*}
\eta_{ {\rm FE}, I} (U) &\coloneqq \left(\sum_{T\in\TT}\eta^2_{T}(U)\right)^{1/2},\qquad 
\eta_{T}(U) \coloneqq \norm{\eta_{T}(\ \cdot\ ; U)}_{L^{\infty}(\Gamma)},\\
\eta^2_{T}(\bm{y}; U) &\coloneqq h_T^2 \norm{ S_I \left[f+\nabla\cdot(a\nabla U) \right](\bm{y})}_{L^2(T)}^2
+ \sum_{e\subset \partial T} h_T \norm{S_I \left[\left[ a\nabla U\cdot \bm{n}_e \right]_{\bm{n}_e}\right](\bm{y})}_{L^2(e)}^2.
\end{align*}

\bigskip

Since we use a single mesh for all collocation points $\bm{y}\in \HH_I$,
we replace $\TT_{\bm{y}}$ in Algorithm~\ref{algo:refine_FE} by $\TT$. We change the D\"orfler marking in Line~\ref{algo_FE_ref:line:dorfler_marking} (Alg.~\ref{algo:refine_FE}) to: Find  minimal $\mathcal{K}\subseteq \TT$ such that
\begin{align*}
 \sum_{T\in\mathcal{K}} \eta_T(U)^2 \geq \theta \eta_{{\rm FE},I}^2.
\end{align*}
Moreover, we replace the refinement loop in Line~\ref{algo_FE_reg:NVB} (Alg.~\ref{algo:refine_FE}) by a single refinement of the mesh $\TT$ with marked elements $\mathcal{K}$.

\bigskip

Due to the fact that $U\colon \Gamma \to \mathcal{S}_0^1(\mathcal{T})$
admits a holomorphic extension to $\Sigma(\Gamma,\bm{\tau})$ just as does $u$ (the same arguments work also for the discrete approximation),
the convergence analysis of the parametric enrichment algorithm remains unchanged (Section~\ref{sec:paramconv}), we now have to show convergence of the adaptive finite element subroutine. With this, we may analogously employ the results of Section~\ref{sec:vanishing_param_estim_SCFE} to obtain convergence of the full algorithm. Note that we can not directly transfer the proof of Proposition~\ref{prop:linconv} as the definition of $\eta_{{\rm FE},I}$ in this section mixes $L^2$-norms and $L^\infty$-norms. 

In this setting, the multi-index set $I\subset \N^N$ is fixed. 
We denote by $\TT_{\ell}$ the finite element mesh at step $\ell>0$ (the same for every collocation point). 
$U_{\ell}$ represents the discrete solution at step $\ell$ and $U_{\ell, \bm{y}}\in \mathcal{S}_0^1(\TT_{\ell})$ its value on a collocation point $\bm{y}\in \HH_I$.
We simplify the notation for the estimator as
$\eta_{\ell} \coloneqq \eta_{ {\rm FE}, I}(U_{\ell})$, 
$\eta_{\ell}(U)\coloneqq \left(\sum_{T\in\TT_{\ell}} \eta_T(U)^2\right)^{1/2}$.\\
We first give a perturbation estimate localized on one element $T$ of a mesh $\TT$, analogously to~\cite[Proposition 3.3]{Cascon_2008}. 
\begin{lemma}\label{lem:perturb}
Consider a shape-regular mesh $\TT$ obtained by NVB from a mesh $\TT_{\rm init}$.
There holds for $U,W \in C^0(\Gamma, \mathcal{S}_0^1(\TT))$ that
\begin{align}
\eta_T(U) \leq \eta_T(W) + C\norm{S_I}_{\mathcal{L}(L^\infty(\Gamma,L^2(D)))} \max_{\bm{y}\in\HH}\norm{\nabla (U(\bm{y})-W(\bm{y}))}_{L^2(\omega (T))} \qquad \forall T\in\TT,
\end{align}
where $\omega(T)$ is the union of the elements sharing an edge with $T$, $C>0$ depends only on $a$ and $\TT_{\rm init}$.
\end{lemma}
\begin{proof}
For any fixed $\bm{y}\in\Gamma$, the linearity of $S_I$ and the triangle inequality yield
\begin{align*}
\eta_T(\bm{y}; U) \leq \eta_T(\bm{y}; W)
+ h_T \norm{S_I\left[\nabla\cdot \left(a\nabla (U-W)\right) \right](\bm{y})}_{L^2(T)} \\
+ h_T^{1/2}\sum_{e \subset \partial T} \norm{S_I\left[ [ a\nabla (U-W)\cdot \bm{n}_e ]_{\bm{n}_e}\right](\bm{y})}_{L^2(e)}.
\end{align*}
With the operator norm of $S_I$, we obtain
\begin{align*}
\norm{S_I\left[\nabla\cdot \left(a\nabla (U-W)\right) \right](\bm{y})}_{L^2(T)}
&\leq \norm{S_I}_{\mathcal{L}(L^\infty(\Gamma,L^2(D)))} \max_{\bm{y}\in\HH} \norm{\nabla a(\bm{y})}_{L^\infty(T))} \norm{\nabla (U-W)(\bm{y})}_{L^2(T)}.
\end{align*}
Analogously, for the jump terms $[ a\nabla (U-W)\cdot \bm{n}_e ]_{\bm{n}_e}$ with $e\subset \partial T$, we obtain, following the same steps as in \cite[Proposition 3.3]{Cascon_2008},
\begin{align*}
\sum_{e\subset \partial T} \norm{S_I\left[ [ a\nabla (U-W)\cdot \bm{n}_e ]_{\bm{n}_e}\right](\bm{y})}_{L^2(e)}
\lesssim \norm{S_I}_{\mathcal{L}(L^\infty(\Gamma,L^2(D)))} \max_{\bm{y}\in\HH} \norm{a(\bm{y})}_{L^\infty(\omega(T)))} \norm{\nabla (U-W)(\bm{y})}_{L^2(\omega(T))}.
\end{align*}
This concludes the proof.
\end{proof}
\begin{proposition}
The sequence of finite element estimators $\eta_{\ell}$ obtained from the single mesh adaptive algorithm satisfies 
\begin{align*}
\lim_{\ell\rightarrow \infty} \eta_{\ell} = 0.
\end{align*}
\end{proposition}
\begin{proof}
With the perturbation estimate from Lemma~\ref{lem:perturb}, we may follow~\cite[Section~4.3]{axioms} to show estimator reduction 
\begin{align} \label{eq:perturb_total_estim}
\eta_{\ell+1}^2 \leq q\eta_{\ell}^2 + C^2\norm{S_I}^2 \sum_{T\in\TT_\ell} \max_{\bm{y}\in\HH}\norm{\nabla (U_{\ell+1}(\bm{y})-U_\ell(\bm{y}))}_{L^2(\omega (T))}^2
\end{align}
for some universal $0<q<1$ and all $\ell\in\N$.

To show that the second term in (\ref{eq:perturb_total_estim}) vanishes, we first observe that $(U_{\ell}(\bm{y}))_{\ell\in\N}$ converges in $V$ for all $\bm{y}\in\HH$.
Indeed, for any fixed $\bm{y}\in\HH$, the nestedness of the finite element spaces $V_{\ell}$ guarantees the existence of $U_{\infty}(\bm{y})\in \overline{\bigcup_{\ell} V_{\ell}}\subset V$ such that $\lim_{\ell\rightarrow \infty} \norm{U_{\infty}(\bm{y}) - U_{\ell}(\bm{y})}_{V}=0$ by C\'ea's lemma (see, e.g.,~\cite[Section~3.6]{axioms}).
This implies that
\begin{align}\label{eq:vanishing_limit_increments}
\lim_{\ell\rightarrow \infty} \norm{\nabla(U_{\ell+1}-U_{\ell})(\bm{y})}_{L^2(D)}= 0
\end{align}	
for all $\bm{y}\in\HH$. Since $\#\HH$ is fixed in the finite element refinement loop of the adaptive algorithm, we have
\begin{align*}
 \sum_{T\in\TT_\ell} \max_{\bm{y}\in\HH}\norm{\nabla (U_{\ell+1}(\bm{y})-U_\ell(\bm{y}))}_{L^2(\omega (T))}^2&\leq \sum_{T\in\TT_\ell} \sum_{\bm{y}\in\HH}\norm{\nabla (U_{\ell+1}(\bm{y})-U_\ell(\bm{y}))}_{L^2(\omega (T))}^2\\
 &\lesssim\sum_{\bm{y}\in\HH}\norm{\nabla (U_{\ell+1}(\bm{y})-U_\ell(\bm{y}))}_{L^2(D)}^2\to 0
\end{align*}
as $\ell\to\infty$.
Passing to the limit superior in~\eqref{eq:perturb_total_estim} shows
$
0 \leq \limsup_{\ell\rightarrow\infty} \eta_{\ell+1}
\leq q\limsup_{\ell\rightarrow\infty} \eta_{\ell}
$
and thus concludes $\lim_{\ell\to\infty}\eta_\ell=0$.
\end{proof}
Altogether, we obtain the convergence result analogously to Theorem~\ref{thm:SCFEconv}.
\begin{theorem}\label{thm:singleSCFEconv}
The single mesh SCFE algorithm discussed in this section satisfies the following:
The sequence of parametric a-posteriori error estimators $\left( \zeta_{SC, I_\ell}(U_\ell) \right)_\ell$ vanishes
\[\lim_{\ell \rightarrow \infty} \zeta_{SC, I_\ell}(U_\ell) = 0.\]
Thus, also the finite element error estimator vanishes
\[\lim_{\ell \rightarrow \infty} \eta_{ {\rm FE}, I_\ell}(U_\ell) = 0,\]
and the reliability of the a-posteriori error estimator implies error convergence 
\[\lim_{\ell \rightarrow \infty} \norm{u-S_{I_\ell}[U_\ell]}_{L^{\infty}(\Gamma, V)} = 0.\]
\end{theorem}

\subsection{Cost of the stochastic collocation algorithms}
\revision{Under the assumption that the pointwise estimators $\zeta_{\boldsymbol{i},I}(U)$ and $\eta_{\boldsymbol{y},T}(U)$ can be computed from the discrete solution in $\mathcal{O}(1)$, each step of the adaptive loop (all algorithms) is linear with respect to the number of degrees of freedom of the current sparse grid and spatial meshes. Indeed, a properly preconditioned iterative solver computes $U$ in linear cost (depending on $a_{\rm min}$ and $a_{\rm max}$). The D\"orfler marking in Algorithm~\ref{algo:refine_FE} requires sorting when done in a naive way, but can be improved to linear cost by binning~\cite{stevenson07} or by a clever variation of the quick-select algorithm~\cite{carl}. Finally, the refinement of the finite element meshes $\TT_{\boldsymbol{y}}$ via newest-vertex-bisection can be done in linear cost~\cite{stevenson08}.}

\revision{
As discussed in Section~\ref{sec:numerics} below, the computation of the $L^\infty(\Gamma)$ and $L^2(D)$-norms for $\zeta_{\boldsymbol{i},I}$ is done via a random sample/Monte-Carlo procedure. This results in constant cost $\mathcal{O}(1)$ and the numerical experiments below show that the approximation error is negligible. A precise convergence analysis of this procedure would be interesting but is beyond the scope of this work.
Each random sample requires the evaluation of the sparse grid interpolant. Theoretically, the cost of the evaluation of the sparse grid interpolation operator is linear in terms of collocation points, after a quadratic set-up cost. Practically, however, the cost of computing the discrete solutions is expected to dominate significantly.}

\section{Numerical experiments}\label{sec:numerics}
The Matlab implementation of Algorithm \ref{algo:SCFE} used to produce the numerical results presented in this section is based on {two Matlab libraries. For sparse grid algorithms, the \emph{Sparse Grids Kit} \cite{back.nobile.eal:comparison} was used. The implementation of the adaptive P1 finite element methods is from the { \emph{p1afem}} Matlab package \cite{funken2011efficient}, which uses Matlab's direct solver for sparse matrices. For further details about parameters and algorithm used within these libraries, the reader is referred to the respective documentations.} The parts of the algorithm that deal with parameter enrichment (e.g. Algorithm \ref{algo:Refine_param_space}) were implemented following the guidelines from \cite{guignard2018posteriori}.

In order to compute the $L^{\infty}(\Gamma)$ norm approximately, we consider {a set $\Theta$ of 500 uniformly distributed random points in $\Gamma$} and approximate, for any $g\in C^0(\Gamma)$, $\norm{g}_{L^{\infty}(\Gamma)} {\approx}\max_{\bm{y}\in\Theta} \vert g(\bm{y})\vert$.
The computation of the $L^2(D)$ norm is carried out with Monte Carlo integration: Given $f\in L^2(D)$, we denote by {$\Pi$ a set of 500 uniformly distributed random points in $D$} and approximate $\norm{f}_{L^2(D)}^2 {\approx} \frac{1}{{\#\Pi}} \sum_{x\in \Pi} f(x)^2$.
{The reason Monte Carlo integration is used is that for a generic $\bm{y}\in\Gamma$ the discrete solution $S_I[U](\bm{y})$ belongs to the finite element space $\mathcal{S}^1_0(\TT)$, where $\TT$ is the coarsest common refinement of the meshes $\TT_{\bm{y}}$. Therefore, in order to compute the exact $L^2(D)$ norm of the function, it would be necessary to compute $\TT$, which would lead to a significant computational overhead. In numerical experiments, we have observed that increasing $\#\Pi$ does not lead to a significant improvement in the approximation of the $L^2(D)$ norm, thus suggesting that the approximation error can be neglected.
In the numerical examples presented in the next sections, we approximate the error between the exact solution $u$ and a discrete solution $S_I[U]$ by $\norm{u - S_{I} [U]}_{L^{\infty}(\Gamma,V)}\approx \norm{u_{\rm approx} - S_I [U]}_{L^{\infty}(\Gamma,V)}$, where $u_{\rm approx}$ is a discrete solution obtained as the last iteration of the single mesh version of SCFE. To approximate the $L^{\infty}(\Gamma,V)$-norm appearing in the error, we use the same method detailed above, just with $\#\Theta = \# \Pi = 5000$.}

{To drive parametric refinement, we employ only profits with work as defined in~\eqref{def:profit_work}. As observed in Section \ref{sec:rks_workless_profit}, workless profits lead to a tensor-product interpolant and thus less interesting results.
For all examples, we consider the finite element estimator is $\eta_{FE,I} = \eta_{2,I} \Lambda_{2,I}$ as defined in Section~\ref{sec:others}.
The D\"orfler parameter for refinement is chosen as $\theta = 0.7$ and, as default mesh $\mathcal{T}_{\rm init}$, a quasi-uniform mesh with 512 triangles and 289 vertices.}

In order to decrease the memory requirements of the program, the finite element refinement tolerance from Section~\ref{sec:others} is modified as follows:
\begin{align}\label{def:big_tol}
\textrm{Tol} \coloneqq \alpha \Lambda_{2,I}^{-1} \zeta_{SC, I},
\end{align}
i.e. we neglect the term depending on the margin of $I$. 
{In the experiments below, we observe that this choice does not compromise convergence. Further investigations will have to be carried out in order to understand whether or not the sufficient condition for convergence can be weakened.
The constant $\alpha$ appearing in~\eqref{def:big_tol} is chosen as $\alpha=0.9$. A value of $\alpha$ close to one shifts the balance between finite element refinement and parameter enrichment  towards the latter one.}

In order to improve the computational efficiency, we use the following shortcut in the implementation of Algorithm \ref{algo:refine_FE}: Instead of re-computing the tolerance $\rm Tol$ at each iteration of the loop, we update it only at the end and, if needed, keep refining the finite element solutions. We alternate these two steps until the finite element estimator falls below the tolerance.

{In the following two sections, we consider a physical domain $D= (0,1)^2$ and denote $x = (x_1, x_2)\in D$. The parametric domain is $\Gamma = [-1,1]^N$ for an integer $N$ representing the number of parametric dimensions of the problem.}

We recall that for a numerical solution $S_{I}[U]$ obtained with SCFE, its number of degrees of freedom is proportional to $M\coloneqq \sum_{\bm{y}\in\HH_I} \#\TT_{\bm{y}}$,
where $\#\TT_{\bm{y}}$ is the number of vertices of the mesh corresponding to the collocation point $\bm{y}$ (or equivalently the dimension of the finite element space $V_{\bm{y}}$ up to boundary conditions).

\subsection{First example: Karhunen–Loève expansion with $\textbf{N=5, 11}$}
We consider a constant forcing term $f(x)\equiv 1$ and the following diffusion coefficient with affine dependence on the parameter $\bm{y}\in\Gamma$:
\begin{align}\label{eq:diffusion_KL}
a(x, \bm{y}) = a_0(x) +\frac{1}{3} \left( a_1(x) y_1 + \sum_{n=2}^N a_n(x) y_n \right),
\end{align}
where $a_0(x) \equiv 1$, $a_1(x) \equiv\left( \frac{\sqrt{\pi} L}{2}\right)^{1/2}$ and, for $n>1$, 
\begin{align*}
\lambda_n = \left( \sqrt{\pi} L\right)^{1/2} \exp \left(-\frac{\left(\left\lfloor\frac{n}{2}\right\rfloor \pi L\right)^2}{8} \right),\\
a_n (x) =
\begin{cases}
\sqrt{\lambda_n} \sin{\left(n \pi x_1\right)}\qquad {\rm if }\ n\ {\rm even}\\
\sqrt{\lambda_n} \cos{\left(n \pi x_1\right)}\qquad {\rm if }\ n\ {\rm odd},
\end{cases}
\end{align*}
where $L\in(0,1)$ is a constant.
Such a diffusion coefficient is the result of the Karhunen–Loève expansion \cite{schwab2006karhunen} of the random field $a(x,\omega)$ with mean $a_0$ and covariance 
$Cov(x, x') = \frac{1}{3^2} \exp \left(-\frac{(x_1 - x_1')^2}{L^2}\right)$, for $x,x'\in D$. The constant $L$ denotes the ``correlation length'' of the stochastic parameter.
We choose $L=0.5$, which implies
\begin{align*}
a_1\approx  6.7\cdot 10^{-1},\;\lambda_1 \approx 6.9\cdot 10^{-1},\;
\lambda_2 \approx 2.7\cdot 10^{-1},\;
\lambda_3 \approx 5.8\cdot 10^{-2},\;
\lambda_4 \approx 6.8\cdot 10^{-3},\;
\lambda_5 \approx 4.2\cdot 10^{-4}.
\end{align*}
In the rest of this section, we truncate the expansion to $N=5$ and $N=11$ terms. The aim is to study how the algorithm performs for different numbers of parametric dimensions $N$ on an anisotropic problem, where the first parameters are more relevant than the last ones.

In Figure \ref{fig:explanation_algo}, we use the problem with $N=5$ parameters to provide the reader with a concrete example of the steps of the algorithm.
On the left, we plot the evolution of the estimators with respect to the number of degrees of freedom. We plot the values of the estimators any time they are computed (not only once per iteration). The algorithm alternates between steps of parameter enrichment and mesh refinement. 
The spikes in the value of the finite element estimator correspond to the parametric enrichment steps, when new collocation points are added to the sparse grid with the initial (coarse) mesh $\TT_{\rm init}$.
When finite element refinement is carried out, the finite element estimator eventually decreases with order $M^{-1/2}$, as has to be expected for lowest order adaptive FEM (see also Theorem~\ref{thm:optimality}).
On the right-hand side of Figure \ref{fig:explanation_algo}, we plot the estimator only once per iteration. As prescribed in (\ref{def:big_tol}), the finite element estimator is bounded from above by the parametric estimator after each finite element refinement loop.
\begin{figure}
\includegraphics[width=0.45\linewidth]{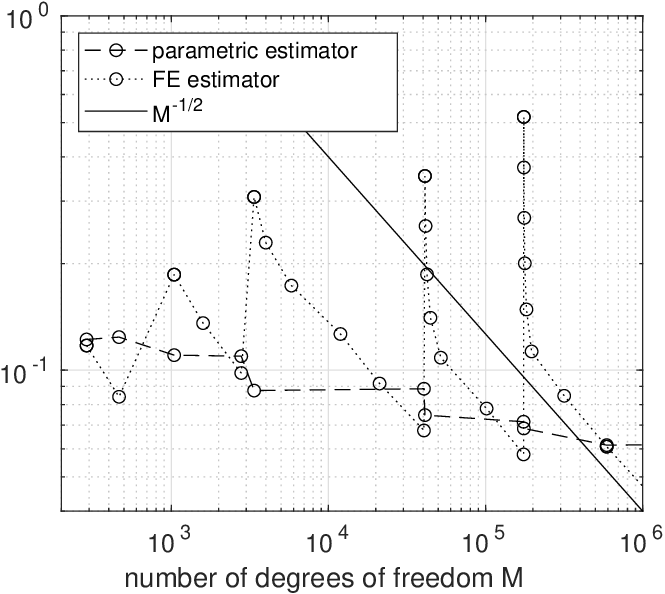}
\includegraphics[width=0.45\linewidth]{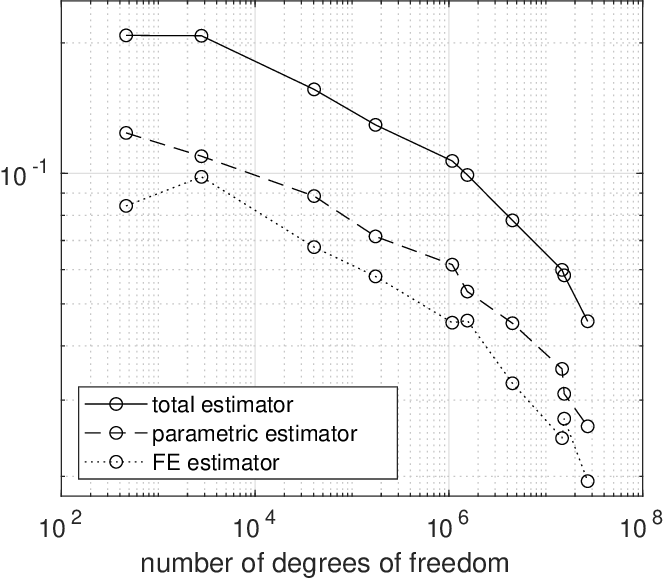}
\caption{First results for SCFE applied to the problem with Karhunen–Loève expansion $N=5$. Left: ``detailed'' evolution of the estimators, i.e. reporting their values any time they are computed during the execution. Right: Total, parametric and finite element estimators at every iteration.}
\label{fig:explanation_algo}
\end{figure}

In Figure \ref{fig:results_estims_errors_KL} we compare the results for $N=5$ and $N=11$.
On the left, the value of total estimator and reference error are plotted as a function of the number of degrees of freedom. The problem with $N=11$ gives larger estimator and reference error. However, the difference is marginal, suggesting that the algorithm successfully detects the anistropy of the problem.
On the right, we plot the effectivity index (ratio between estimator and error). As observed in \cite{guignard2018posteriori}, the number of problem dimensions affects the efficiency of the estimator.
In view of these facts, the algorithm may benefit from an adaptive dimension selection step as the one proposed in \cite[Section 7]{guignard2018posteriori}.
\begin{figure}
\includegraphics[width=0.45\linewidth]{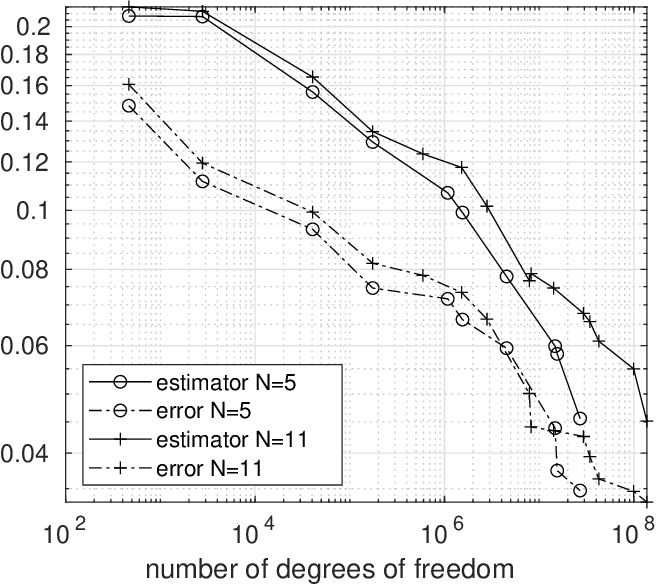}
\includegraphics[width=0.45\linewidth]{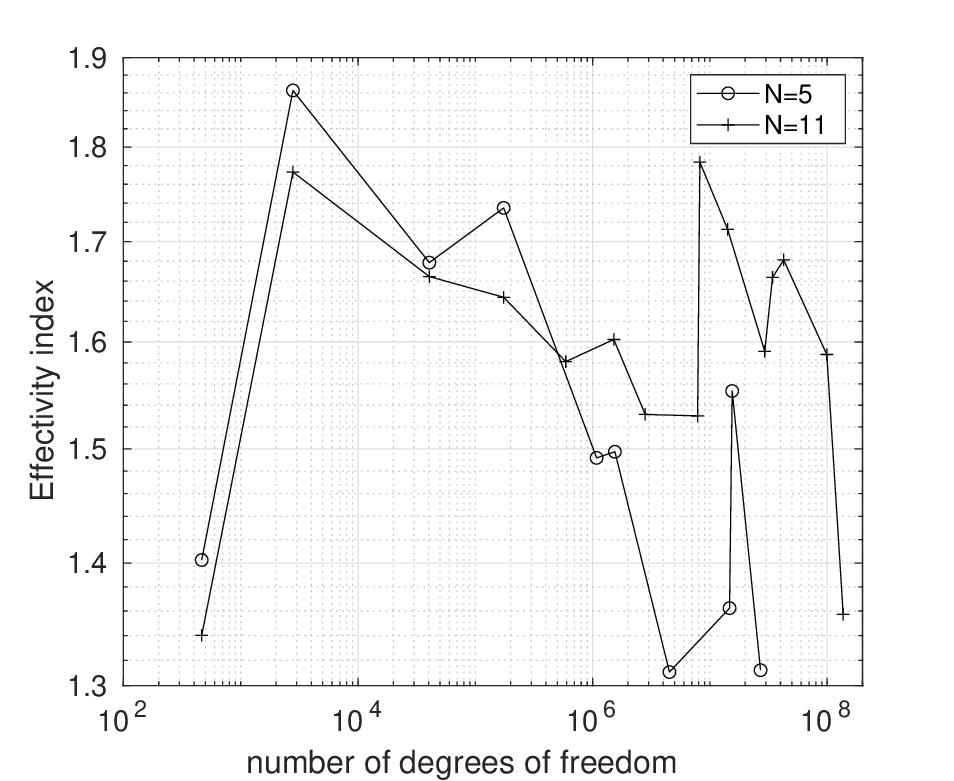}
\caption{Comparing SCFE applied to the problem with Karhunen–Loève expansion for $N=5$ and $N=11$. Left: Total estimator and error. Right: Effectivity index.}
\label{fig:results_estims_errors_KL}
\end{figure}

In Figure \ref{fig:final_mid_set}, we consider the problem with $N=11$ and plot projections of the final multi-index set $I$. The projections are obtained selecting pairs of parametric dimensions $n_1, n_2 \in 1,...,N$ and plotting the 2D set $\left\{ (i_{n_1}, i_{n_2}), \bm{i}\in I\right\}$. Observe how larger values are achieved by the first parametric dimensions, confirming that the algorithm manages to detect the anisotropy of the problem.
\begin{figure}
\includegraphics[width=\linewidth]{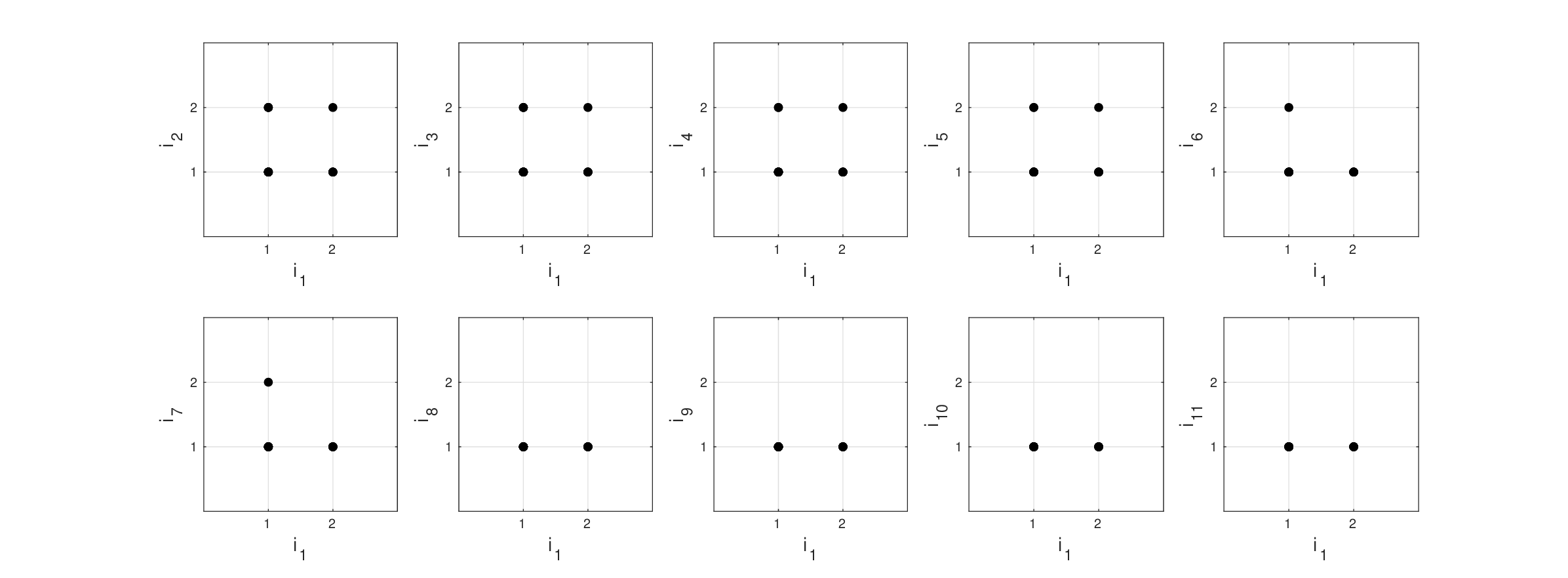}
\caption{Projections of the final multi-index set from SCFE applied to the the problem with Karhunen–Loève expansion for $N=11$.}
\label{fig:final_mid_set}
\end{figure}

\subsection{Second example: Inclusion problem with $\textbf{N=8}$}
We consider an inclusion problem with $N=8$ parameters similar to that in~\cite{guignard2018posteriori}. Within $D$, we identify nine disjoint subdomains $F$ and $\left\{C_n\right\}_{n=1}^8$ depicted in Figure \ref{fig:domain_incl_pb}. 
The diffusion coefficient reads
\begin{align}\label{eq:diffusion_coeff_8d_incl}
a(x, \bm{y}) = a_0(x) + \sum_{n=1}^8 \gamma_n \chi_n y_n \qquad \textrm{with } a_0 \equiv 1.1,
\end{align}
where $\left(\gamma_n\right)_{n=1}^8 = \left(1, 0.8, 0.4, 0.2, 0.1, 0.05, 0.02, 0.01\right)$ are constants used to introduce anisotropy in the problem and $\chi_n$ is the characteristic function of $C_n$, for all $n\in 1,...,8$.
The forcing term reads $f(x) \coloneqq 100 \chi_{F}(x)$, where $\chi_{F}$ is the characteristic function of $F$. 
\begin{figure}
\begin{tikzpicture}[scale=5]
\newcommand\ep{0.0625}

\coordinate (y) at (0,1.2);
\coordinate (x) at (1.2,0);
\draw[axis] (y) node[left]{$x_2$} -- (0,0) --  (x) node[below]{$x_1$};
\foreach \x in {0.2, 0.4, 0.6, 0.8, 1} {
    \draw (\x,0.01) -- (\x,-0.01) node[below] {\x};
}
\foreach \y in {0.2, 0.4, 0.6, 0.8, 1} {
    \draw (0.01, \y) -- (-0.01,\y) node[left] {\y};
}
\filldraw[draw=black,fill=none] (0,0) rectangle (1,1);
\foreach \x in {1,...,3}
    \foreach \y in {1,...,3} 
       \filldraw[draw=black,fill=none] (\x*\ep+\x*0.25-0.25, \y*\ep+\y*0.25-0.25) rectangle (\x*\ep+\x*0.25, \y*\ep+\y*0.25);
\node[] at (0.2, 0.2) {$C_1$};
\node[] at (0.5, 0.2) {$C_2$};
\node[] at (0.8, 0.2) {$C_3$};
\node[] at (0.2, 0.5) {$C_4$};
\node[] at (0.5, 0.5) {$F$};
\node[] at (0.8, 0.5) {$C_5$};
\node[] at (0.2, 0.8) {$C_6$};
\node[] at (0.5, 0.8) {$C_7$};
\node[] at (0.8, 0.8) {$C_8$};
\end{tikzpicture}
\includegraphics[width=0.38\linewidth]{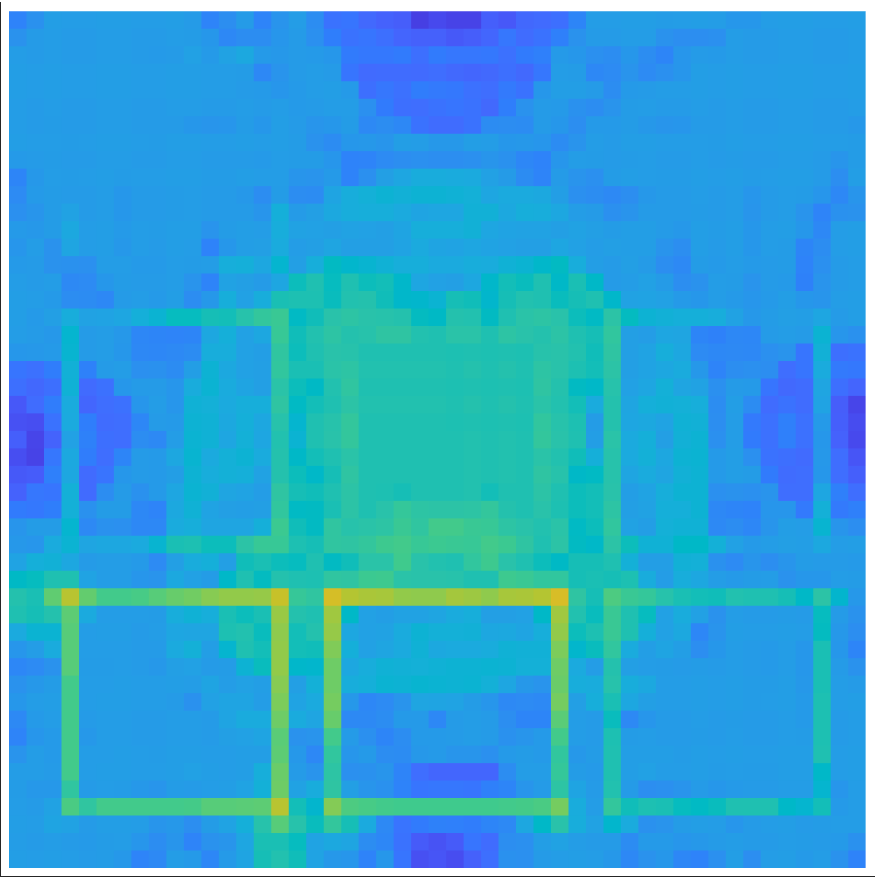}
\includegraphics[width=0.1\linewidth]{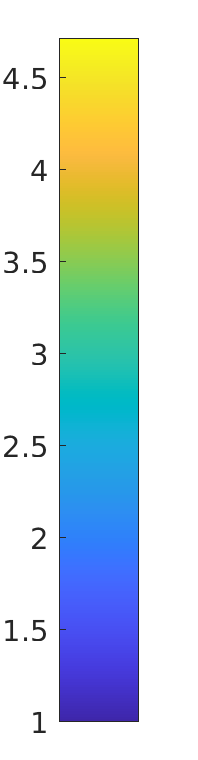}
\caption{Left: Domain for the inclusion problem. Right: Logarithmic density plot of a mesh of SCFE with single adaptive mesh. The colors refer to the number of mesh elements within one pixel of the plot.}
\label{fig:domain_incl_pb}
\end{figure}

In order to highlight the importance of adaptive finite element refinement in space, we present a comparison between the single mesh version of SCFE from Section~\ref{sec:singlemesh}, where the unique mesh is adaptively refined with D\"orfler marking, and an analogous version where only uniform refinement on the whole mesh is allowed.
In Figure~\ref{fig:SCFE_single_mesh} (top left) we report for both algorithms the value of the estimator and reference error. The adaptive version clearly outperforms the one with uniform refinement.
\revision{In Figure~\ref{fig:domain_incl_pb} (right) we show a density plot of a mesh produced by the algorithm with $\approx 2\cdot 10^7$ degrees of freedom.} We see that mesh refinement occurs along the boundary of the inclusions and is more pronounced for the inclusions corresponding to larger anisotropy parameter $\gamma_n$, confirming that the algorithm detects the parametric structure of the problem.
\begin{figure}
\includegraphics[width=0.45\linewidth]{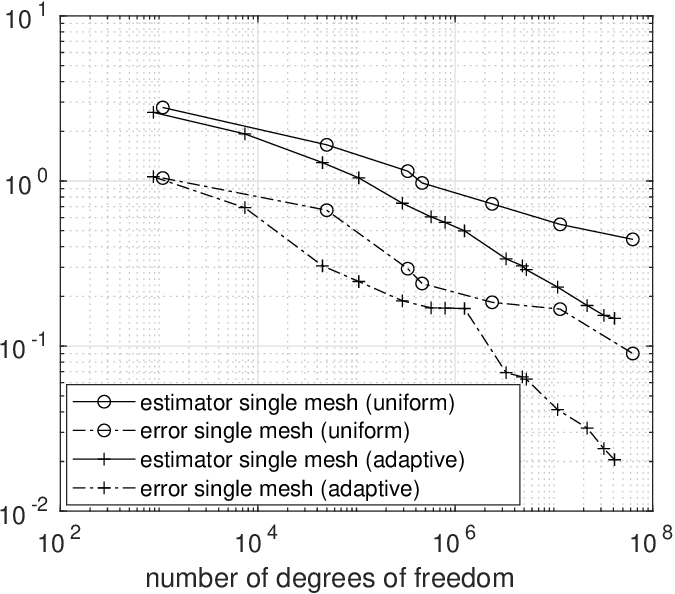} 
\includegraphics[width=0.45\linewidth]{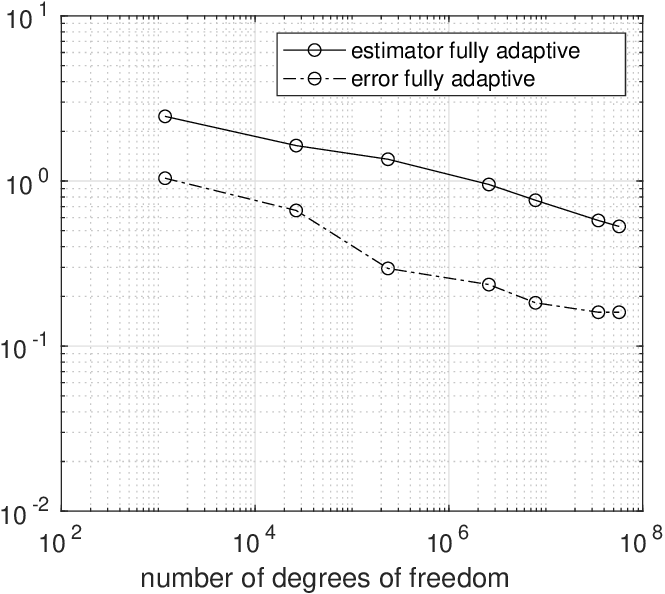}\\
\includegraphics[width=0.45\linewidth]{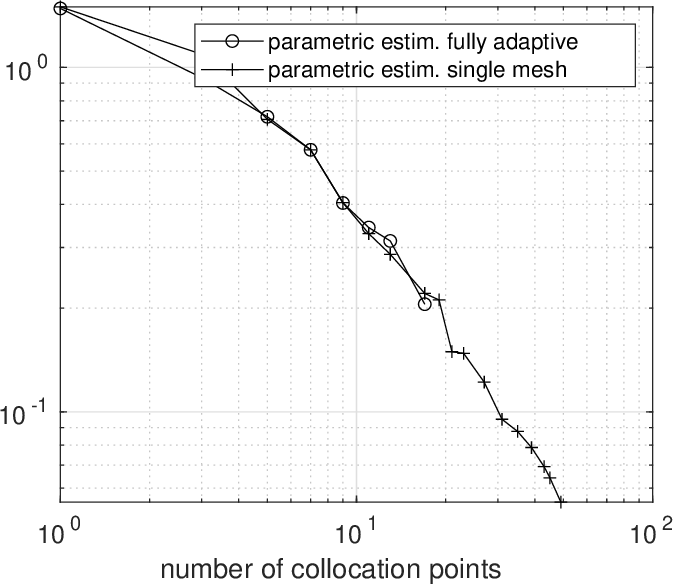}
 \includegraphics[width=0.5\textwidth]{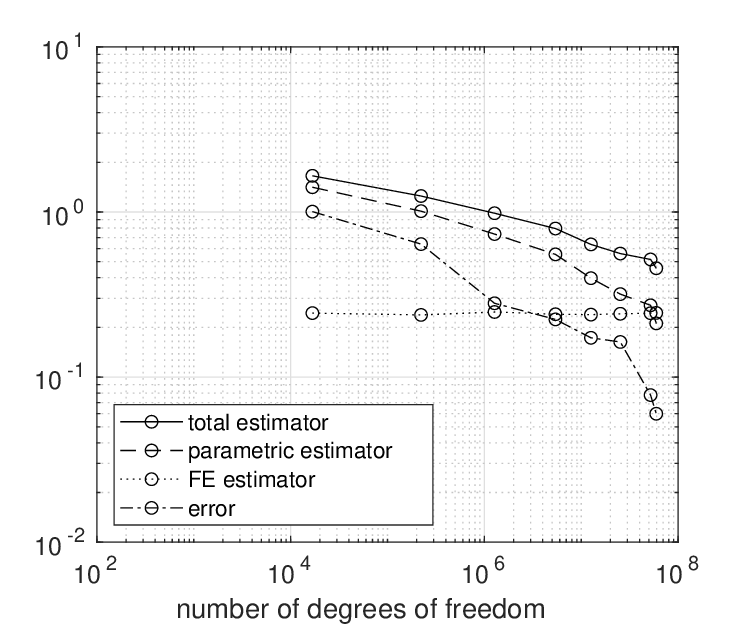}
\caption{Results for SCFE  on the 8D inclusion problem. Top left: Comparison between adaptive and uniform space refinement with the single mesh algorithm from Section~\ref{sec:singlemesh}. Top right: Total estimator and error for the fully adaptive algorithm. Bottom left: Parametric estimator as a function of the number of collocation points, for both the fully adaptive and the single mesh SCFE. Bottom right: the fully adaptive SCFE with fixed value for finite element tolerance ${\rm Tol}$. }
\label{fig:SCFE_single_mesh}
\end{figure}

In Figure \ref{fig:SCFE_single_mesh} (top right) we study the fully adaptive SCFE algorithm and observe a clear performance benefit for the single mesh algorithm from Section~\ref{sec:singlemesh}.
Additional insight is given in the plot on the bottom left of Figure~\ref{fig:SCFE_single_mesh}. Here we show the value of the parametric estimator with respect to the number of collocation point for both the fully adaptive and single mesh versions of SCFE. This shows that the fully adaptive SCFE algorithm seems to overrefine the finite-element meshes. {We suspect that this is due to the fact that in the derivation of $\eta_{{\rm FE},I}$ in Section~\ref{sec:def_adaptive_algo}, one is required to use the triangle inequality and thus sacrifices local information of the sparse grid interpolant. This is not necessary in the single-mesh estimator from Section~\ref{sec:singlemesh}.}

Finally, we ran Algorithm~\ref{algo:SCFE} with fixed tolerance ${\rm Tol}$ (not depending on the parametric estimator) and plot the results in Figure~\ref{fig:SCFE_single_mesh} (bottom right). We observe that the results are very much comparable to the standard fully adaptive SCFE algorithm, except for significant over refinement in the early stages of the computation (see the flat line of the finite element error estimator satisfying the tolerance). In terms of computational effort, the algorithms are nearly identical, as the same spatial refinements are performed, only at different stages of the algorithm.

We also tested the algorithms with respect to the $L^2(\Gamma)$-norm instead of the $L^\infty(\Gamma)$-norm. The necessary changes in the estimators are straightforward, essentially we replace the search for the maximum by a Monte Carlo quadrature. The theoretical results of this manuscript all hold verbatim for the $L^2(\Gamma)$-norm. Figure~\ref{fig:SCFE_8D_incl_L2} shows the results. Again the single mesh algorithm outperforms the fully adaptive algorithm.

\subsubsection{Distribution of computational cost}
\revision{In Figure~\ref{fig:SCFE_8D_incl_L2} (right-hand side) we compare the total number of degrees of freedom and collocation points achieved by the three methods, i.e. the single adaptive mesh algorithm from Section~\ref{sec:singlemesh}, adaptivity in the parameter space but uniform refinement in the spatial domain, and the fully adaptive SCFE algorithm (Algorithm~\ref{algo:SCFE}). 
The adaptive strategy with a single adaptive mesh  performs parametric refinement more often than the other two, leading to a higher number of collocation points and lower average number of degrees of freedom per collocation point. 
In Figure~\ref{fig:meshes_MAM} (compare also Figure~\ref{fig:domain_incl_pb}) we provide logarithmic density plots of the meshes produced by the multiple adaptive mesh algorithm (with $\approx 2\cdot 10^7$ degrees of freedom). We observe that the mesh corresponding to a collocation point is locally refined along the edges of the corresponding inclusion. Furthermore, the intensity of the refinement around a certain inclusion is related to the constant $\gamma_n$ of the diffusion coefficient, confirming that the numerical methods detects the anisotropy of the problem.}


\section{{Conclusion}}
We analyze the adaptive stochastic collocation algorithm from~\cite{guignard2018posteriori} and prove convergence of several different versions of the algorithm:
\begin{itemize}
 \item Convergence of the parametric enrichment algorithm without finite element refinement (Section~\ref{sec:paramconv})
 \item Convergence of the fully adaptive algorithm (Algorithm~\ref{algo:SCFE}) even with optimal convergence of the finite element loop (Theorem~\ref{thm:optimality})
 \item Convergence of a single-mesh variant of Algorithm~\ref{algo:SCFE} (Section~\ref{sec:singlemesh}) proposed in~\cite{guignard2018posteriori}.
\end{itemize}
The numerical examples clearly show the superiority of spatial adaptive refinement combined with parametric enrichment over pure parametric enrichment algorithms.
While the theoretical results are strongest for the fully adaptive algorithm (linear convergence in Proposition~\ref{prop:linconv} for Algorithm~\ref{algo:SCFE}) the single mesh algorithm from Section~\ref{sec:singlemesh} seems to be more efficient. This is underlined  by the numerical experiments in the previous section, which clearly show an advantage of the single mesh version over the fully adaptive version. Based on the theoretical results from Theorem~\ref{thm:optimality} and the experiments, we come to the conclusion that the finite element error estimator of Algorithm~\ref{algo:SCFE} severely over-estimates the total error and hence leads to over-refinement of the finite element meshes. This does not seem to happen for the single-mesh error estimator. We suspect that the application of the triangle inequality in the derivation in Section~\ref{sec:others} is mainly responsible for this over-estimation and further research is required to see whether this can be avoided. 
 \begin{figure}
\includegraphics[width=0.45\linewidth]{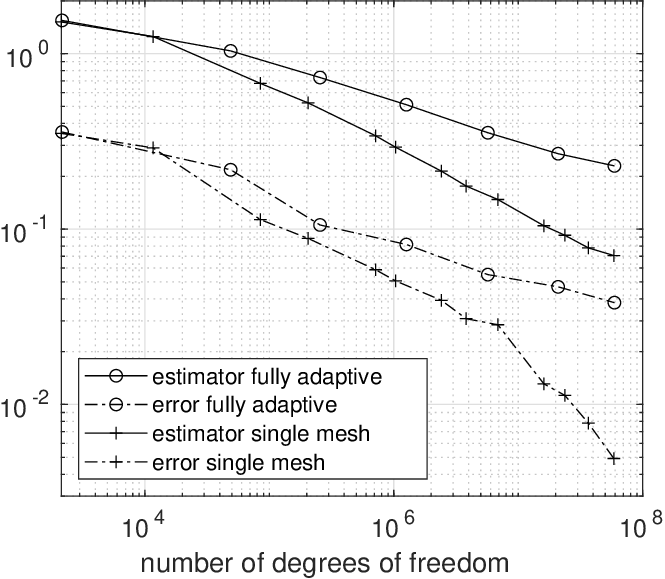} 
\includegraphics[width=0.47\linewidth]{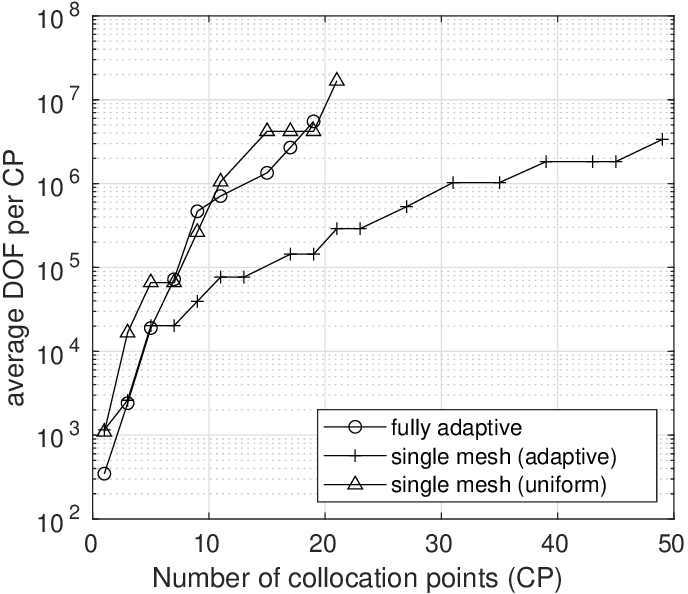}
\caption{Left: Numerical results on the 8D inclusion problem in the $L^2(\Gamma)$-norm. Right: Total estimator and error for SCFE (fully adaptive). Right: \revision{Average number of degrees of freedom (DOF) per collocation point (CP) plotted versus the number of collocation points. Each line corresponds to one of the three proposed algorithms. 
Each marker corresponds to one step of the adaptively refined discrete solution.}}
\label{fig:SCFE_8D_incl_L2}
\end{figure}
\begin{figure}
\begin{tabular}{cccc}
\includegraphics[width=0.22\linewidth]{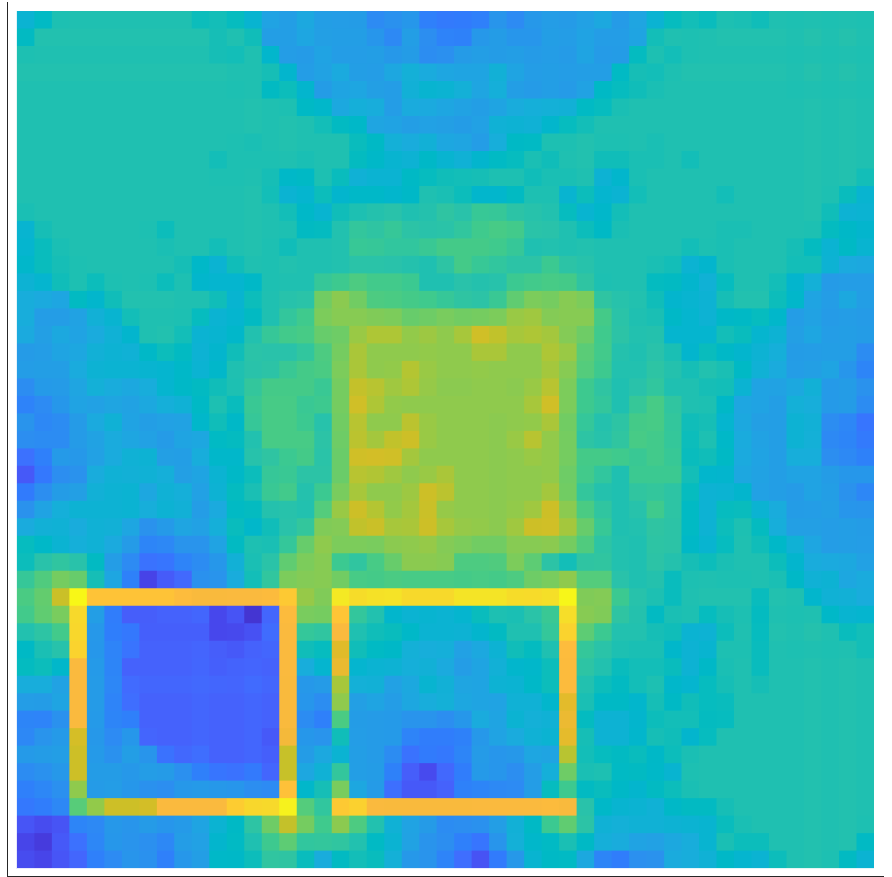} & \includegraphics[width=0.22\linewidth]{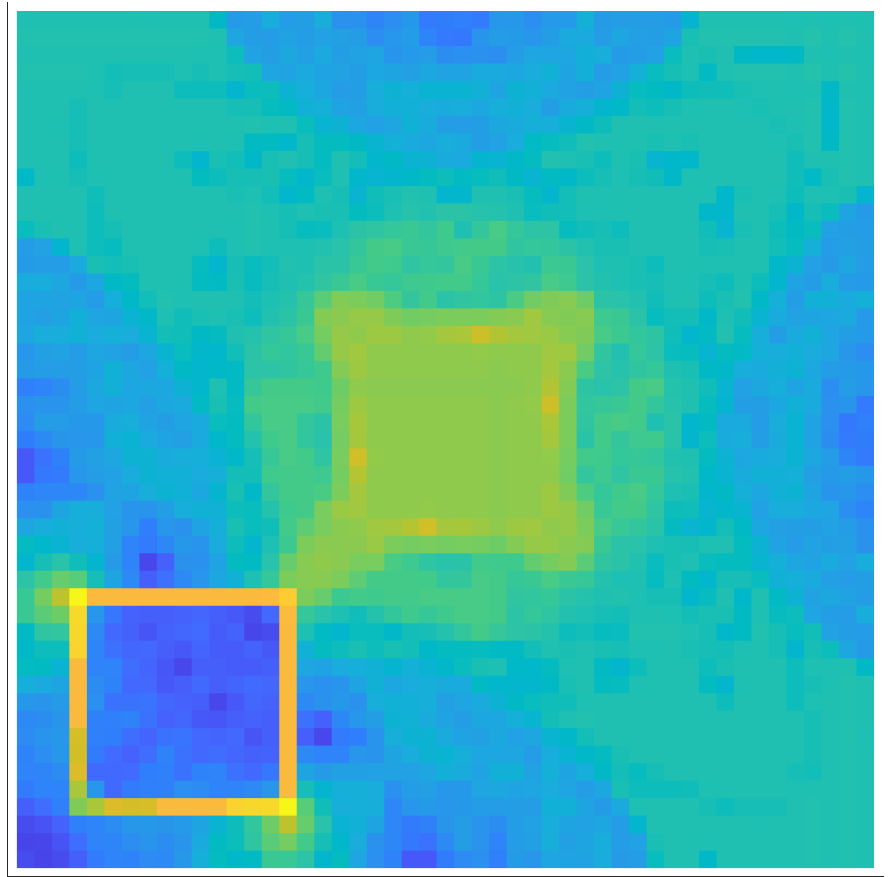} & \includegraphics[width=0.22\linewidth]{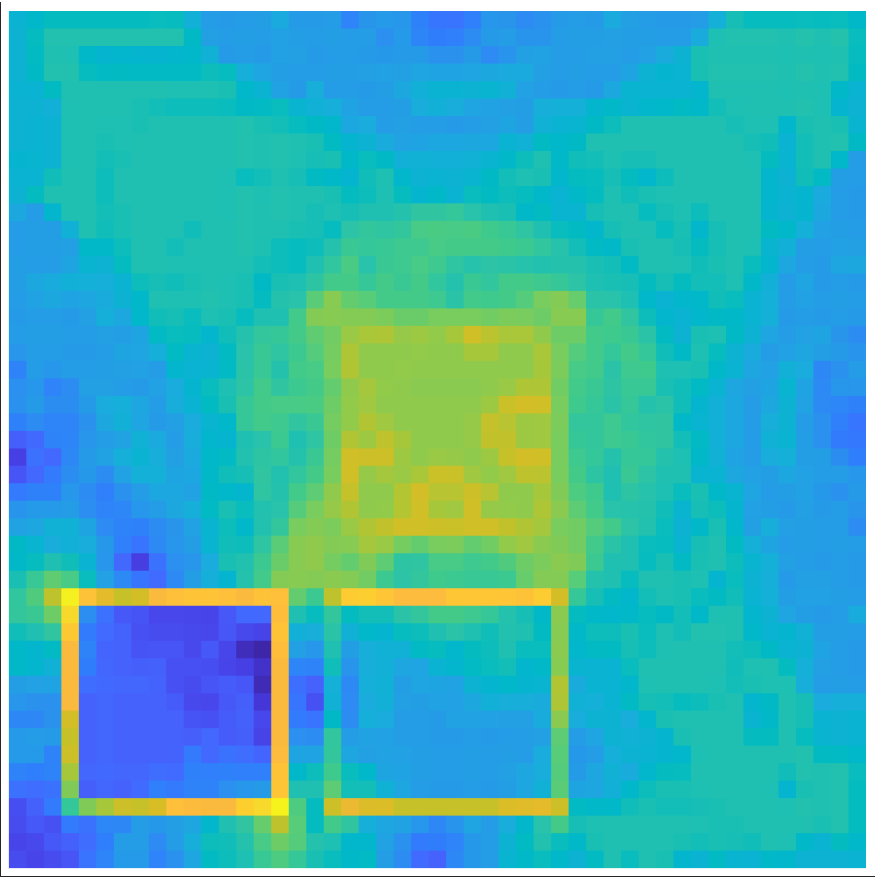} & \includegraphics[width=0.22\linewidth]{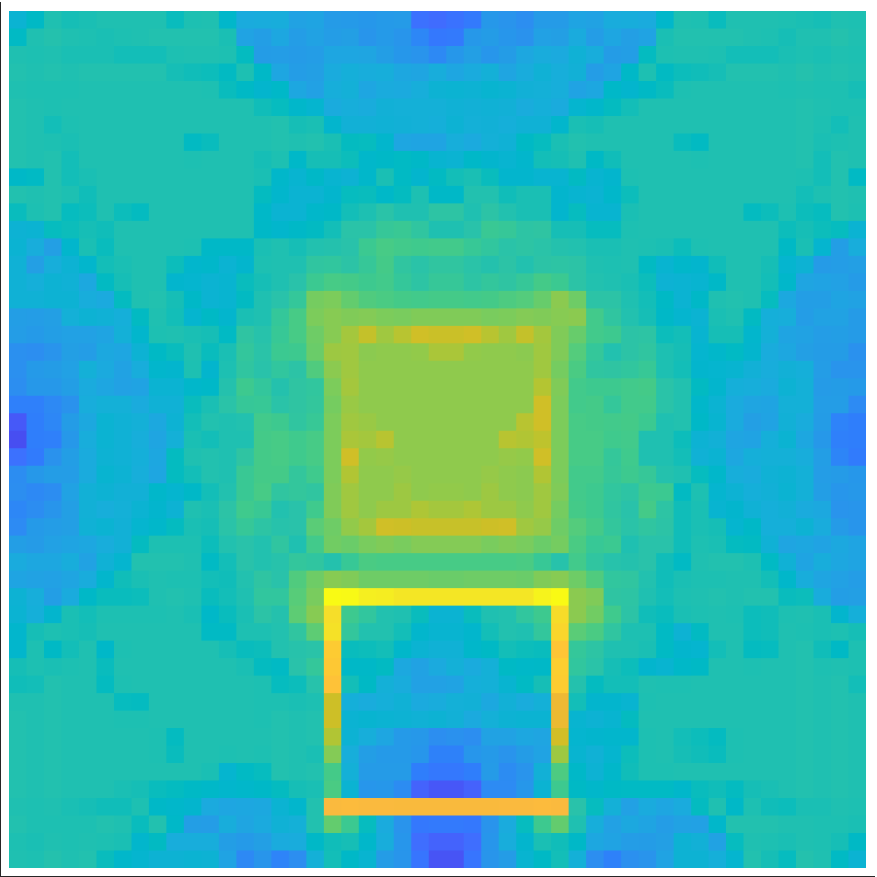} \\
$\bm{y} = (-1,-1)$ & $\bm{y} = (-1)$ & $\bm{y} = (-1,1)$ & $\bm{y} = (0,-1)$\\
\includegraphics[width=0.22\linewidth]{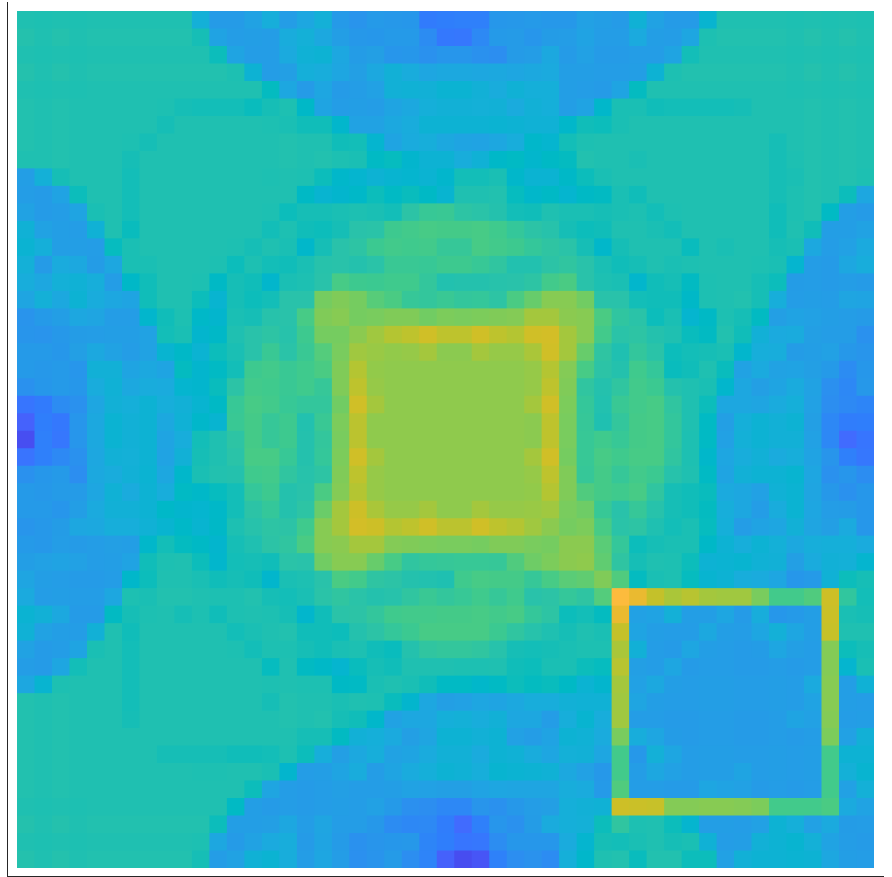} & \includegraphics[width=0.22\linewidth]{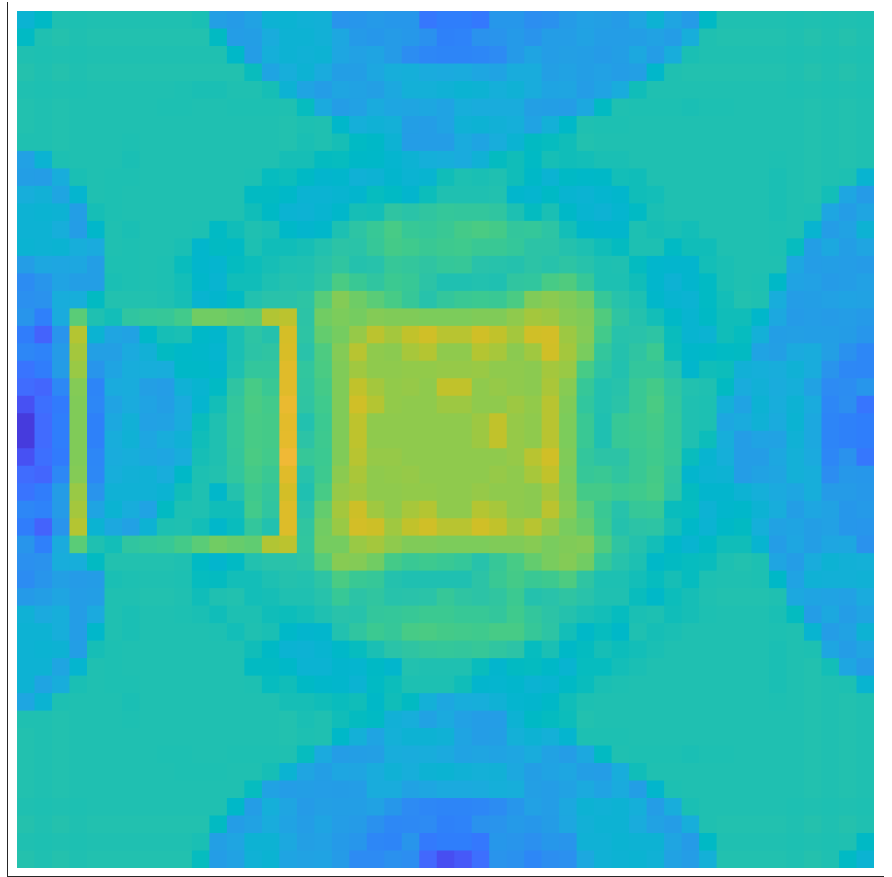} & \includegraphics[width=0.22\linewidth]{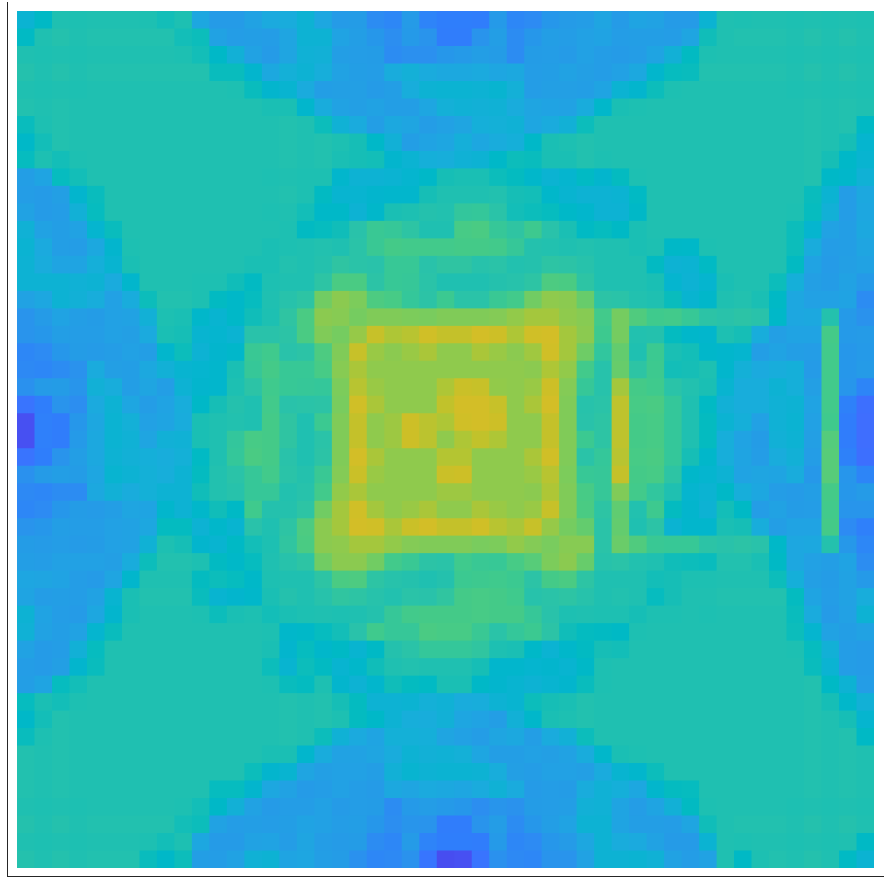} & \includegraphics[width=0.22\linewidth]{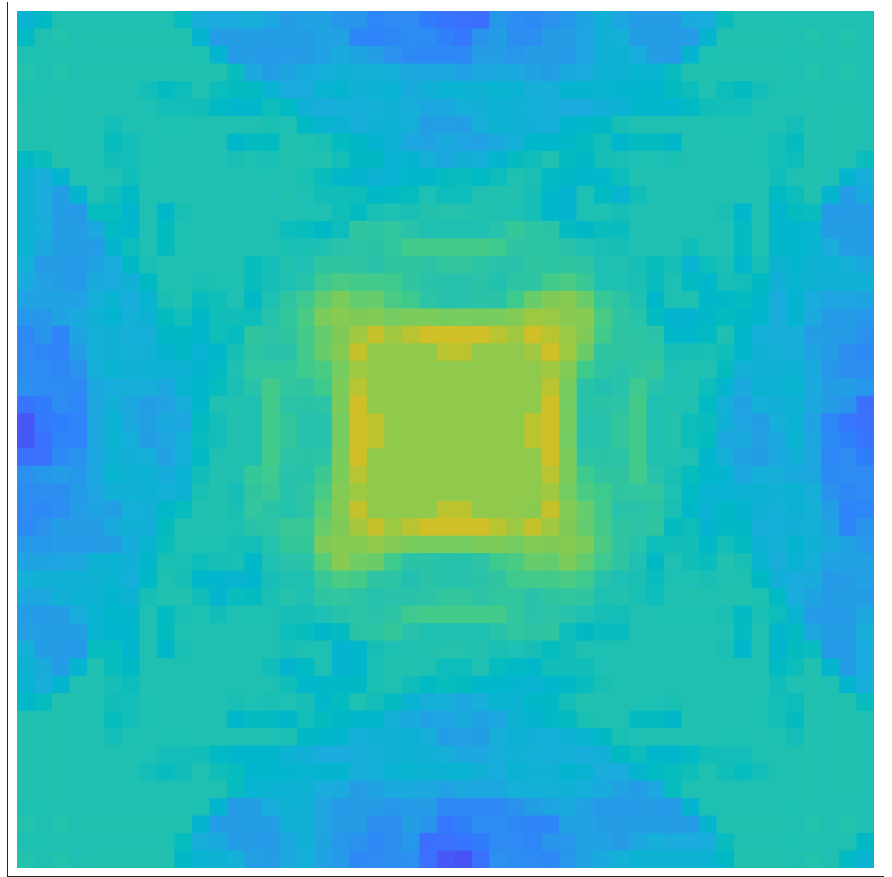}\\
$\bm{y} = (0,0,-1)$ & $\bm{y} = (0,0,0,-1)$ & $\bm{y} = (0,0,0,0,-1)$ & $\bm{y} = \bm{0}$\\
\includegraphics[width=0.22\linewidth]{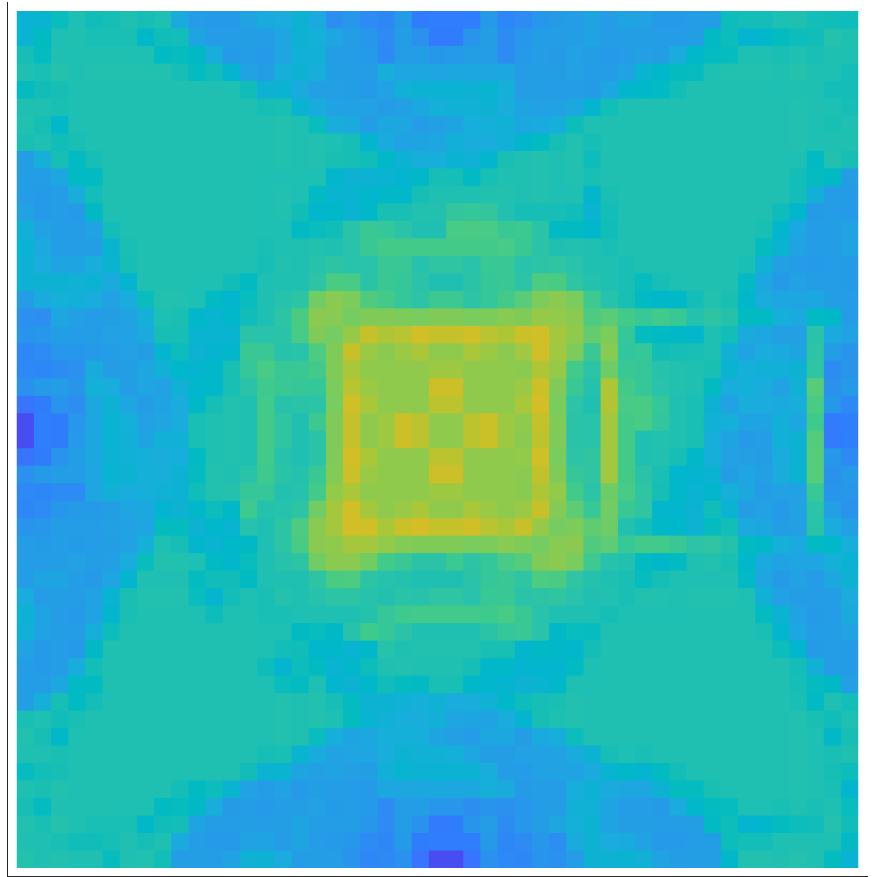} & \includegraphics[width=0.22\linewidth]{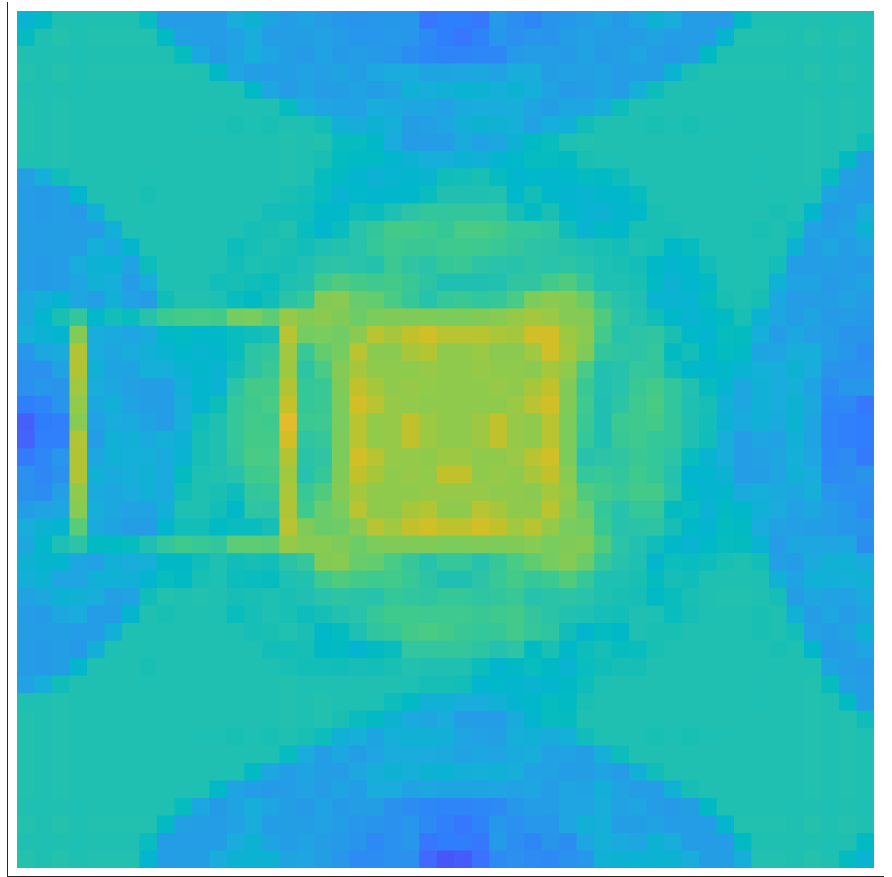} & \includegraphics[width=0.22\linewidth]{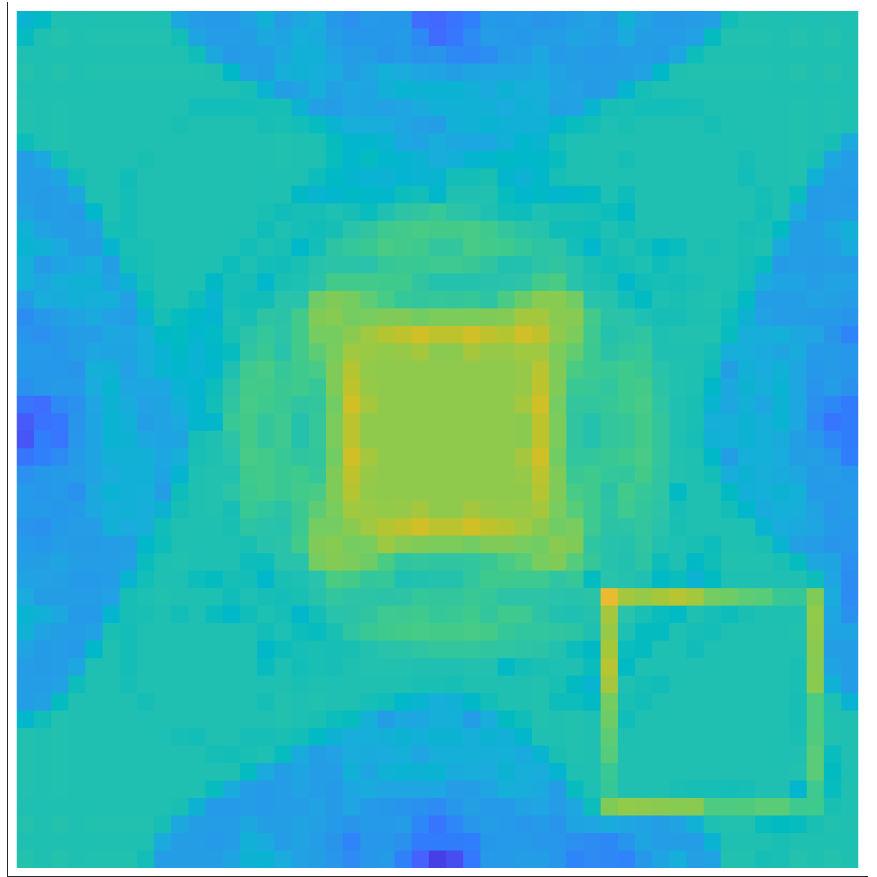} & \includegraphics[width=0.22\linewidth]{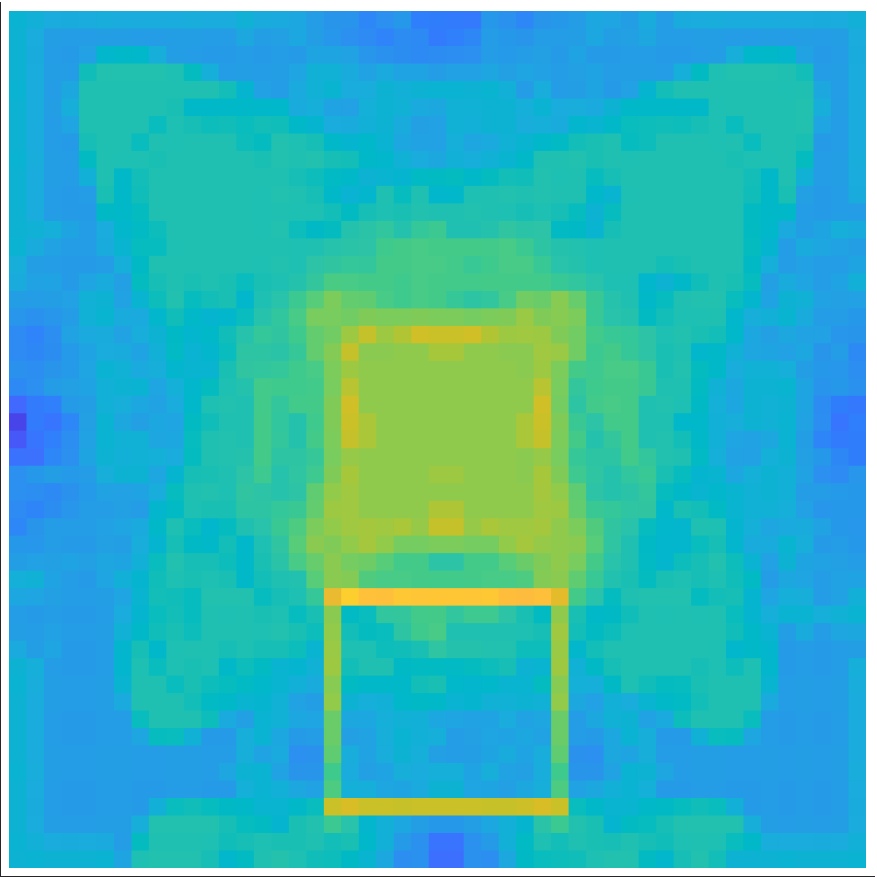}\\
$\bm{y} = (0,0,0,0,1)$ & $\bm{y} = (0,0,0,1)$ & $\bm{y} = (0,0,1)$ & $\bm{y} = (0,1)$\\
\includegraphics[width=0.22\linewidth]{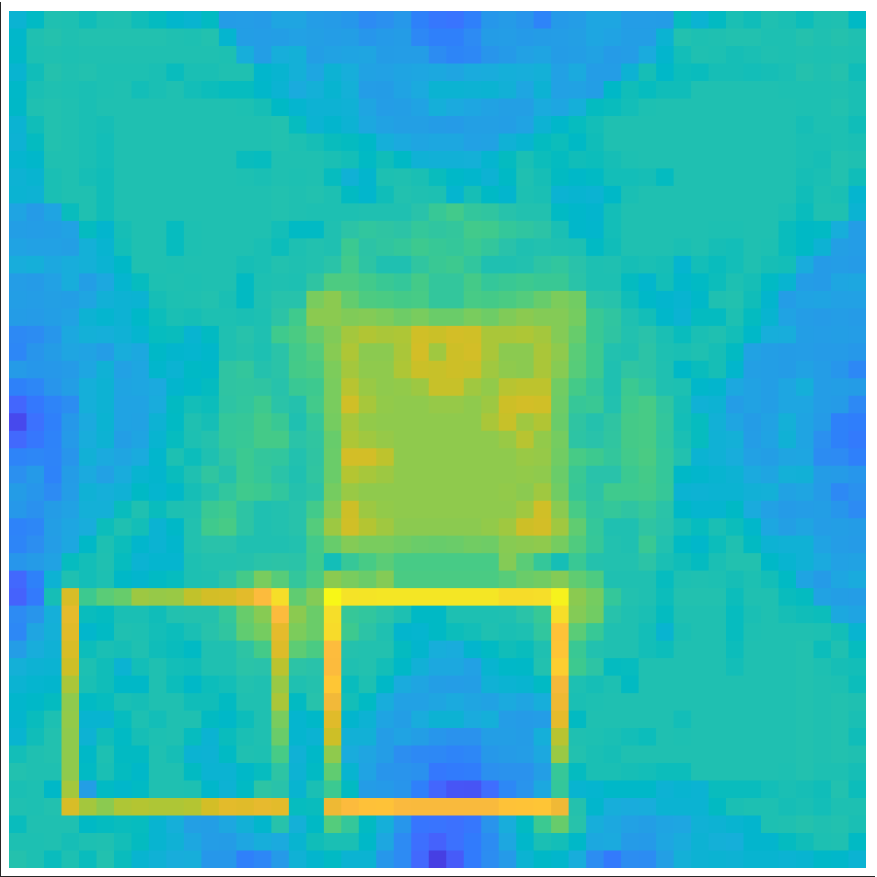} & \includegraphics[width=0.22\linewidth]{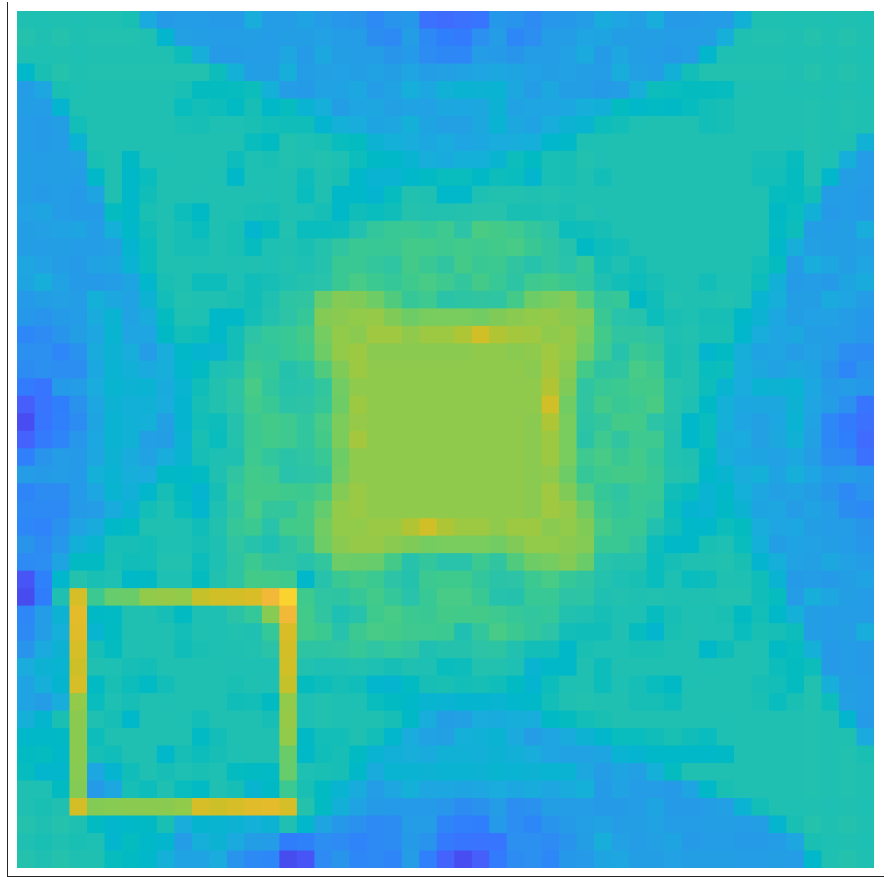} & \includegraphics[width=0.22\linewidth]{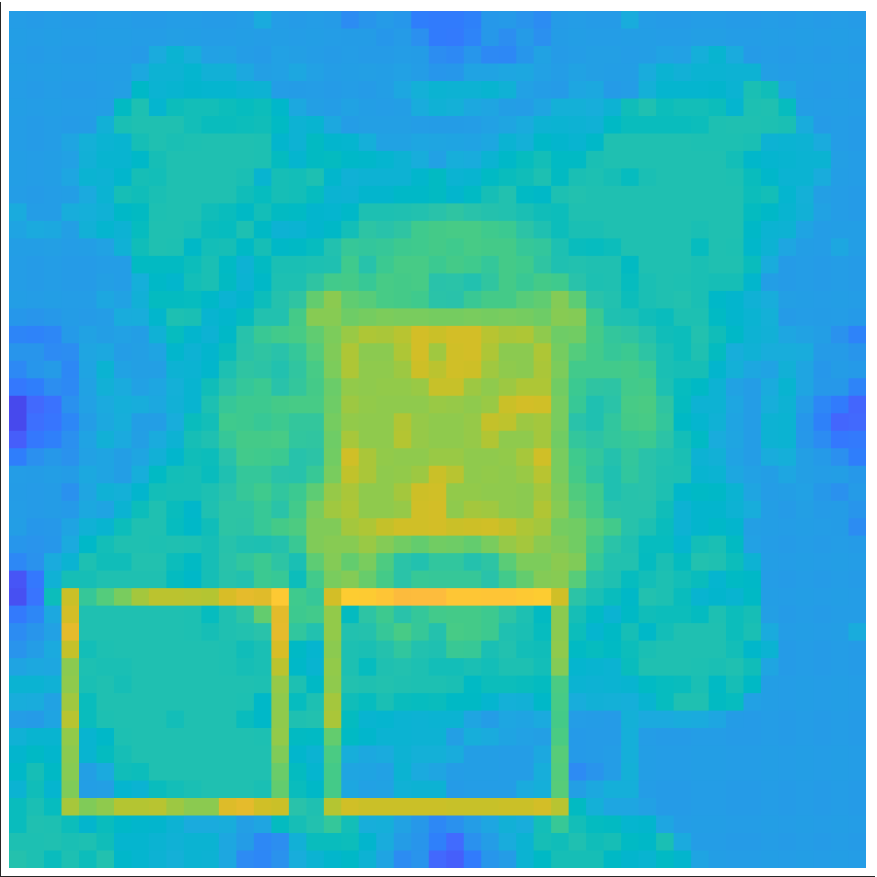}\\
$\bm{y} = (1,-1)$ & $\bm{y} = (1)$ & $\bm{y} = (1,1)$\\
\multicolumn{4}{c}{\includegraphics[width=0.7\linewidth]{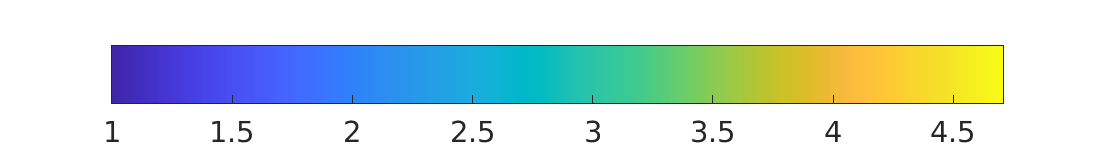}}
\end{tabular}
\caption{\revision{Density plot of the meshes produced by the fully adaptive SCFE algorithm. The corresponding collocation point is indicated below, ignoring the trailing components equal to zero.
The color-bar at the bottom indicates the base-10 logarithm of the density of elements.}}
\label{fig:meshes_MAM}
\end{figure}

\bibliographystyle{plain} 
\bibliography{literature}
\end{document}